\documentclass[a4paper,11pt]{article}

\usepackage[latin1]{inputenc}

\usepackage[english]{babel}

\usepackage{amssymb}
\usepackage{latexsym}

\usepackage{amsfonts}

\usepackage{amsmath}

\usepackage{bm}

\usepackage{bbm}

\usepackage{mathrsfs}

\usepackage{enumerate}

\usepackage[T1]{fontenc}
\usepackage{ae}

\usepackage{graphicx}

\usepackage[english]{varioref}

\newcommand{\N}{\mathbbm{N}}
\newcommand{\C}{\mathbbm{C}}

\newcommand{\R}{\mathbbm{R}}
\newcommand{\Z}{\mathbbm{Z}}
\newcommand {\V}{\mathbbm{V}}

\usepackage[amsmath,thmmarks]{ntheorem}

\theoremseparator{.}

\newtheorem{thm}{Theorem}
\newtheorem{prop}[thm]{Proposition}
\newtheorem{lemma}[thm]{Lemma}

\theorembodyfont{\normalfont}

\theoremheaderfont{\normalfont\itshape}
\theorembodyfont{\normalfont}

\newtheorem{remark}[thm]{Remark}

\theoremstyle{nonumberplain}
\theoremheaderfont{\normalfont\itshape}
\theorembodyfont{\normalfont}
\theoremsymbol{$\square$}
\newtheorem{proof}{Proof}

\begin{document}

\title{Conformal symmetries in geometry\\ and harmonic analysis\\
Introductory lectures at IHP Paris, January  2025\\
trisemester program on
\\Representation Theory \\and Noncommutative Geometry}
\author{Bent \O{}rsted, Aarhus University}
\date{} 

\maketitle  
%
%
%


In these lectures we wish to give a gentle introduction and overview of two
aspects of conformal symmetry, ususally not treated together; one is that
of conformal differential geometry on a Riemannian manifold, in particular
invariants related to heat equations for elliptic operators, and the other
that of representations of the conformal group. Both are applications of the
conformal covariance of differential operators such as the Yamabe operator.
Specifically we shall see how representation theory can be applied to the
study of extremal properties of determinants of some elliptic operators,
and similarly how geometry can be applied to find the explicit branching law
for the minimal representation of $O(p,q)$ with respect to some symmetric
subgroups. In effect, we give three models of the representation, an elliptic,
a hyperbolic, and a parabolic.

\section{Overview and motivation}

The theory of Lie groups was from the beginning closely connected to geometry and to solving differential
equations; Sophus Lie himself was motivated by geometries such as those of Bernhard Riemann and also
projective geometry, and he saw the analogy to Galois theory in studying the symmetries of
differential equations. See the paper with historical notes by S. Helgason \cite{he2}, where it is
explained that Lie had heard via Sylow about the discoveries of Galois of the connection between the
structure of the permutations (the Galois group)
of the roots of a polynomial equation, and the solvability (or
non--solvability) of this equation by radicals. Lie had the idea of doing something similar for
differential equations. Thus his theory of transformation groups was relevant - and had already been
used for geometric problems in projective geometry. One might say that for Lie the
symmetries (of an equation or a geometry) was the basic object, whereas for Riemann the
basic object was the geometry itself, its geodesics, curvature and so on.
Both have had influence on theoretical physics, Riemann in connection with
general relativity through his study of metric tensors and curvatures, essentially in a way independent
of choices of coordinates, and Lie for the roles of symmetry, usually in the form of a group of
transformations of space (or physical objects in space) - 
sometimes by infinitesimal transformations, i.e. vector fields -
this with deep applications in both classical mehanics and
in the later quantum mechanics.
From a modern mathematical point of view, we have the similar two aspects of conformal
geometry, namely let $(M, g)$ be s smooth manifold with a smooth Riemannian metric tensor $g$, then
\begin{itemize} 
\item a {\it conformal change} of metric means replacing $g$ by $\overline{g} = e^{2\omega}g$
with $\omega \in C^{\infty}(M, \R)$, a smooth real function on $M$.
\item a {\it conformal transformation} means a (perhaps only local) diffeomorphism
$\Phi: M \mapsto M$ with $\Phi^*g = e^{2\omega}g$, for some (depending on $\Phi$)
$\omega \in C^{\infty}(M, \R)$, 
i.e. the pull-back of the metric
is conformal to itself.
\end{itemize}
In both cases the geometric meaning is that the angle between two smooth curves
at a point of intersection is preserved. The metric $g$ gives an inner product in the tangent space
and hence the angle between the two tangent vectors, and this is preserved by multiplying
the inner product by a positive number. Actually Riemann himself gave such an example (one of the
few formulas in the paper) in his thesis about the the foundations of geometry, namely in $\R^n$
$$ds^2 = \frac{ds_0^2}{(1 + K |x|^2)^2}$$
where the numerator and $|x|^ 2$ is the ususal Euclidian metric, and the new metric is of constant
curvature, either positive, zero, or negative, depending on the sign of the constant $K$.

Conformal geometry, as for example in \cite{bra_1}, \cite{bran_ors1}, \cite{bran_ors2} has as objective to study quantities that 
are invariant, or change in a predictable way, under conformal changes of metric; these could
in particular be functionals $F$ on the space of all metrics $g$ on a fixed $M$, invariant
under conformal changes: $F(e^{2\omega}g) = F(g)$ for all $g$ and $\omega$.
Another important topic is to find differential operators $L = L(g)$ canonically associated the Riemannian structure, say on functions or on sections of natural bundles; the operators 
should exhibit {\it conformal symmetry}, namely
$$L(\overline{g}) = L(e^{2\omega}g) = e^{-b\omega} L(g) e^{a\omega}$$
for some real constants $a, b \in \R$. Here the right hand side is the composition of multiplication
operators and the differential operator $L(g)$. The numbers  $(a, b)$ are called the conformal bidegrees of
$L$. We shall also refer to such operators as conformally covariant differential operators, and
they will be natural in the sense of being constructed via the metric tensor $g$, i.e. via the geometry
on $M$. A prominent example is the {\it Yamabe operator}
$$Y = \Delta + \frac{n-2}{4(n-1)} K$$
with $K = g^{ij}r_{ij}$ the scalar curvature (see below for the index notation,
and the notions of curvatures and connection $\nabla$ - the Ricci curvature as a
tensor in local coordinates is here denoted $r_{ij}$), $n$ the dimension
of $M$
and $\Delta = - g^{ij}\nabla_i \nabla_j$
the (non-negative) Laplace operator on functions, sometimes also written $\Delta = d^* d$.
The Yamabe operator is conformally covariant with bidegrees $a = (n-2)/2, \, b =(n+2)/2$.
This is the starting point of our treatment, namely there will be two kinds of consequences of this
conformal symmetry, following the philosophical principle that a symmetry of a system gives
rise to conserved quantities: 

(1) In conformal differential geometry we shall find some 
conformally invariant functionals
$F = F(g)$ depending on the Riemannian metric $g$ on a fixed compact manifold $M$. They arise
primarily through a study of the heat equation
$$\frac{\partial u(t,x)}{\partial  t} = Y u(t,x)$$
for a function $u = u(t,x), t \in \R, x \in M$ with $Y$ the Yamabe operator on $M$ (or a similar
conformally covariant elliptic differential operator, acting in the $x$ variable). Using the theory of pseudodifferential
operators and Sobolev spaces, one constructs a fundamental solution with an asymptotic
expansion for small $t$, and coefficients reflecting the geometry of $(M, g)$. The conformal
symmetry of $Y$ gives information about the change of these coefficients under conformal
change of metric.

(2) In the representation theory of the indefinite orthogonal group $G = O(p,q)$,
viewed as the conformal group of the product of spheres $S^{p-1} \times S^{q-1}$ with
the natural indefinite Riemannian metric (positive on the first factor, negative on the second), we shall construct its miminal unitary representation in the kernel of the Yamabe operator.
At the same time we get models that are convenient for finding the
branching law, i.e. the explicit restriction of this representation to some symmetric subgroups of $G$.
Here it is crucial that the Yamabe equation $Y \varphi = 0$ is invariant under conformal changes
of coordinates, so we may solve it in any such system; in effect it corresponds to the well-known
separation of variables in solving partial differential equations, where the separations comes from
breaking the symmetries - and not surprisingly gives the symmetry-breaking operators and the branching
law. Just as in the case of the metaplectic (harmonic/oscillator) representation there are interesting
aspects of the various realizations in concrete Hilbert spaces, and we shall give in fact three such,
namely corresponding to conic sections of elliptic, hyperbolic, and parabolic type. The last one is
closely connected with Fourier analysis in $\R^n$ with an indefinite symmetric bilinear form.
From the point of view of the philosophical principle of symmetry, the conserved quantities
are now subspaces of solutions to the Yamabe equation. 

In this note we are trying to present some ideas centered around the notion of conformal symmetry;
some details and (several) proofs are left out, but hopefully the references given will allow
a fuller understanding. It is a topic which illustrates how Lie theory and geometry are
interconnected - and can (hopefully) serve as an inspiration to other similar geometric
situations. One lesson is that working on a homogeneous space $M = G/P$ for a Lie
group $G$ and a closed subgroup $P$ (typically a parabolic subgroup
in $G$, a semisimple Lie group), one can often 
learn about geometries on deformations
of $M$, and $G/P$ in this way serves as a "flat" model for more general "curved" (parabolic) geometries $M.$
Our primary example is the standard $n$-sphere $S^n$ with its group of conformal transformations
by $G =  SO(n+1,1)$ as a model for more general conformal geometry in $n$ dimensions.

{\bf Acknowledgement.} Thanks are due to the referee for a careful reading and suggestions
to improve this essay. 

\section{Conformal differential geometry}
For the basic notions of connections and curvatures see \cite{he}. We denote by $T_x = TM_x$ the tangent space to the manifold $M$ at the point $x \in M$ and by $T^*_x = TM_x^*$ the dual co-tangent
space; by $T$ denote the corresponding smooth vector bundle and $T^*$ he dual bundle. Vector
fields $X, Y, \dots$ are smooth sections of $T$, i.e. $X, Y \in C^{\infty}(T)$. We may form tensor
products of $T$ and $T^*$, giving rise to tensor fields as (always smooth) sections. A metric tensor
$g$ is then a section of the symmetric tensors of rank two, i.e. $g \in C^{\infty}(S^2 T^*)$ with the
extra condition of defining a symmetric non-degenerate bilinear form in each tangent space $T_x$; 
for a Riemannian positive metric this should be positive-definite, but may also have a signature - for
example in general relativity usually of Lorentz signature (positive for one time-like direction, and
in the remaining space-like directions negative). Given a metric there is a unique connection
$$\nabla: C^{\infty}(T) \mapsto C^{\infty}(T^* \otimes T)$$
which defines the covariant derivative of a vector field $Y$ in the direction of the vector field $X$
by $\nabla_X Y = \nabla (Y)(X)$ so $\nabla_X: C^{\infty}(T) \mapsto C^{\infty}(T)$. This is
called the Levi-Civita connection and it is determined by the torsion vanishing 
$$\nabla_X Y -  \nabla_Y X - [X,Y] = 0$$
and preserving the metric in the sense that
$$(\nabla_X Y, Z) + (Y, \nabla_X Z) = X(Y,Z)$$
for all vector fields; here the bilinear form defined by the metric is $(X,Y) \in C^{\infty}(M)$.
The Lie bracket is the vector field $[X, Y] = XY - YX$, acting on functions. There is a natural extension,
via the Leibniz rule, of the connection to all tensor fields
$$\nabla: C^{\infty}(E) \mapsto C^{\infty}(T^* \otimes E)$$
with $E$ a bundle of tensors; then the metric condition above reads $\nabla g = 0$. For a general
vector bundle $E$ a connection is such a first order partial differential operator $\nabla$
satisfying $\nabla(f \phi) = df \otimes  \phi + f \nabla \phi $ for all $f \in C^{\infty}(M)$ and
all $\phi \in C^{\infty}(E)$.
In a system of
local coordinates $x = (x^1, \dots ,x^n)$ we have the coordinate frames $(\partial_i = \frac{\partial}{\partial x^i})$ resp. $(dx^i)$ for the tangent resp. the co-tangent bundles. In these the metric reads
$ds^2 = g_{ij}dx^i \circ dx^j$ (Einstein summation convention over repeated indices, and the
symmetric tensor product $v \circ w = \frac{1}{2}(v \otimes w + w \otimes v)$ for $v, w \in T^*_x$).
Also $\nabla_{\partial_i}\partial_j = \Gamma_{ij}{}^k \partial_k$ with the so-called Christoffel symbols
$\Gamma_{ij}{}^k$ (smooth functions, but not tensors).

The covariant derivative is the basic geometric quantity, and the Christoffel symbols are first-order
derivatives in the metric tensor; the Riemann curvature tensor $R$ is a collection of
second-order derivatives of the
metric, and in coordinate-free way given as
$$R(X,Y)Z = (\nabla_X \nabla_Y - \nabla_Y \nabla_X - \nabla_{[X,Y]})Z$$
for vector fields $X, Y, Z.$ At each point $x$ it defines a multilinear mapping
$R : T_x \times T_x \times T_x  \mapsto T_x$ and thus gives an element 
$R_x \in T^*_x \otimes T^*_x \otimes T^*_x \otimes T_x.$ We may identify $T_x$ and $T^*_x$
via the metric so that in the local coordinates the inverse matrix $g^{ij}$ to the matrix $g_{ij}$
gives the corresponding bilinear form in $T^*_x.$ The orthogonal group $SO(n)$ acts in each tangent space
as the defining repesentation, and $R_x$ belongs then to a representation space $V$ for $SO(n).$
This space is the sum of three irreducible representations, namely
$$V = V_{scalar} \oplus V_{Ricci_0} \oplus V_{Weyl}$$
with highest weights $(0), (2), (2,2)$ resp. i.e. the trivial, the trace-free symmetric two-tensors,
or harmonic polynomials of degree two, and the remaining tensors. The last component is called the
Weyl curvature, and the first two scalar curvature and trace-free Ricci. They are identified as
$$Ric(X, Y) = {\rm Tr}(Z \mapsto R(X, Z)Y)$$
which is a bilinear symmetric form $r_{ij}$ and its trace $K = g^{ij} r_{ij}$ the scalar
curvature. We have that $r_{ij} = R_{ikj}{}^k$, expressing the curvature tensor with these
coefficients in local coordinates. See the paper \cite{str} (and the MathSciNet review by Vanhecke) for
a discussion of this $O(n)$ representation theory, including that of $\nabla R$ and its geometric
interpretations; note that for $n \geq 4$ the vanishing of the Weyl tensor means that the metric
is conformally flat, i.e. it is conformal to the Euclidian metric. For $n = 3$ the $(2,2)$ is not there.
Curiously the linear space of $\nabla R$ at a point is the direct sum of representations with 
highest weights $(1), (2,1), (3), (3,2).$ So in some ways Riemannian and also conformal geometry
is closely linked to the representation theory of the orthogonal group.

Now for the conformal geometry we need to see how these geometric objects 
change under conformal change of the metric tensor. Already in dimension two (where the
scalar curvature is twice the Gauss curvature) it is not completely obvious; of course it
has to be such that the integral of the scalar curvature over a compact Riemann 
surface does not change, since it is a constant times the Euler characeristic. And indeed, we have
$$\Delta \omega + J = J_{\omega}e^{2\omega}$$
for the Gauss curvatures $J$ and $J_{\omega}$ of $g$ and $\overline{g} = e^{2 \omega} g.$
Hence integration over $M$ with respect to the natural volume $dvol_g$ from the metric will
become $\int_M J dvol_g = \int_M J_{\omega} dvol_{\overline{g}}$ as desired. Note that
$\int_M \Delta \omega dvol_g = 0.$ This is our
first example (although somewhat trivial) of a functional $F(g)$ which is conformally invariant.
Our aim is to find more of a similar nature.

\begin{lemma} The Yamabe operator on $M$ of dimension $n$
in the metric $g$ with Laplace operator $\Delta$
and scalar curvature $K$
$$Y = Y(g) = \Delta + \frac{n-2}{4(n-1)} K$$
satisfies
$$Y(\overline{g}) = Y(e^{2\omega}g) = e^{-b\omega} Y(g) e^{a\omega}$$
with  $a = (n-2)/2, \, b = (n+2)/2$.
Hence the Yamabe operator $Y$ is conformally covariant with bidegrees $a = (n-2)/2, \, b = (n+2)/2$.
\end{lemma}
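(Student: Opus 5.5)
The plan is to compute directly how the two ingredients of $Y$, the Laplacian $\Delta = -g^{ij}\nabla_i\nabla_j$ and the scalar curvature $K$, transform under $\overline{g} = e^{2\omega}g$. We then check that the first-order terms cancel and that the zeroth-order terms combine correctly. The argument is local, so we work in coordinates, or with an orthonormal frame for $g$.

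For the Laplacian we use the divergence form $\Delta f = -|g|^{-1/2}\partial_i(|g|^{1/2}g^{ij}\partial_j f)$. Under the conformal change $\overline{g}^{ij} = e^{-2\omega}g^{ij}$ and $|\overline{g}|^{1/2} = e^{n\omega}|g|^{1/2}$. Inserting these gives
$$\overline{\Delta} f = e^{-2\omega}\bigl(\Delta f - (n-2)(d\omega, df)\bigr).$$
Next we apply this to $f = e^{a\omega}u$ and compare with $e^{-b\omega}\Delta(e^{a\omega}u)$. Since $b = a+2$, we need to expand
$$\Delta(e^{a\omega}u) = e^{a\omega}\bigl(\Delta u - 2a(d\omega,du) + a\,\Delta\omega\, u - a^2|d\omega|^2u\bigr).$$
With the normalisation $\Delta = d^*d$ the sign of the $\Delta\omega$ term has to be tracked carefully. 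Multiplying by $e^{-b\omega}$, we get
$$e^{-b\omega}\Delta(e^{a\omega}u) = e^{-2\omega}\bigl(\Delta u - (n-2)(d\omega,du)\bigr) + e^{-2\omega}\bigl(a\Delta\omega - a^2|d\omega|^2\bigr)u.$$
Here the choice $2a = n-2$ is exactly what makes the gradient terms match those of $\overline{\Delta}u$. The conclusion is that
$$\overline{\Delta} = e^{-b\omega}\Delta e^{a\omega} - e^{-2\omega}\bigl(a\Delta\omega - a^2|d\omega|^2\bigr).$$

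The second step, and the main obstacle, is the transformation law of the scalar curvature:
$$\overline{K} = e^{-2\omega}\bigl(K + 2(n-1)\Delta\omega - (n-1)(n-2)|d\omega|^2\bigr).$$
Here $\Delta$ is nonnegative; for $n=2$ this reduces to the Gauss curvature formula quoted above, with $K = 2J$. To derive it, we compute the change of Christoffel symbols
$$\overline{\Gamma}_{ij}{}^k = \Gamma_{ij}{}^k + \delta_i^k\omega_j + \delta_j^k\omega_i - g_{ij}\omega^k.$$
We then insert this into $R(X,Y)Z$ and contract twice. The bookkeeping can be organised through the Schouten tensor: the Riemann tensor changes by the Kulkarni--Nomizu product of $g$ with $\nabla^2\omega - d\omega\otimes d\omega + \tfrac12|d\omega|^2 g$, and taking traces gives the Ricci and scalar laws. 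Getting the signs and constants right here is the only delicate point, and the two-dimensional case serves as a check.

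Finally, we multiply the scalar law by $c = \frac{n-2}{4(n-1)}$:
$$c\overline{K} = e^{-2\omega}\Bigl(cK + \tfrac{n-2}{2}\Delta\omega - \tfrac{(n-2)^2}{4}|d\omega|^2\Bigr) = e^{-2\omega}\bigl(cK + a\Delta\omega - a^2|d\omega|^2\bigr).$$
These extra terms are precisely the ones that must be added to $\overline{\Delta}$. Adding the two results gives $\overline{Y} = e^{-b\omega}\Delta e^{a\omega} + e^{-b\omega}cK e^{a\omega}$, because multiplication operators commute and $e^{-2\omega} = e^{-b\omega}e^{a\omega}$. This is the claimed identity $Y(\overline{g}) = e^{-b\omega}Y(g)e^{a\omega}$.
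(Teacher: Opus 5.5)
Your proposal is correct and takes the route the paper indicates. The paper only names the conformal change of the Levi-Civita connection, $\overline{\nabla}_X Y = \nabla_X Y + d\omega(X)Y + d\omega(Y)X - g(X,Y)\nabla\omega$, as the basic identity, and you use exactly this in Christoffel form for the scalar curvature law. Your bookkeeping checks out: the first-order terms match because $2a = n-2$, and the zeroth-order terms $a\Delta\omega - a^2|d\omega|^2$ cancel because $c\cdot 2(n-1) = a$ and $c\cdot (n-1)(n-2) = a^2$. Taking $\overline{\Delta}$ from the divergence form rather than from the trace of the transformed Hessian is an equivalent shortcut, and your signs are consistent, including the $n=2$ check against $\Delta\omega + J = J_{\omega}e^{2\omega}$.
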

Here the basic identity is for the change of the Levi-Civita connection, namely
$$\overline{\nabla}_X Y  = \nabla_X Y + d\omega (X) Y + d\omega(Y)X - g(X,Y) \nabla \omega$$
with the gradient defined as the dual of the derivative as $d\omega (X) = g(\nabla \omega, X)$ for
all vectors $X.$
Recall the well-known formula in two dimensions for the conformal change of the Laplace operator:
$\Delta(e^{2\omega}g) = e^{-2\omega} \Delta (g)$, i.e. just a special case of the Yamabe relation.
A subtle point, but as we shall see significant, is that differentiating the Yamabe relation
with respect to the dimension $n$ at $n = 2$, and applying both sides to the
constant function $1$, leads to the formula for the conformal change of
Gauss curvature, which is a remarkable formula, since it is linear in the function $\omega.$

\begin{remark} The study of scalar curvature is a classical subject with a large literature;
we mention here two important aspects, related to the above, namely
\begin{itemize} 
\item Can we on a compact Riemannian manifold $(M, g)$ find a conformally related metric
with constant scalar curvature? 
\item Which functions $K$ can be the scalar curvature for some metric on a manifold $M$, say
on the $n$-sphere $S^n$?
\end{itemize}
\end{remark}
The first is the celebrated Yamabe conjecture, solved in the affirmative by the efforts of many workers,
in particular by R. Schoen (see \cite{bra_1} and references therein, also as a general reference to
conformal geometry). Note that it amounts to solving, due to Lemma 1, applied to the constant function
$1$, for a positive smooth function $\phi$, and $c_n = (n-2)/4(n-1)$
$$(\Delta + c_n K)\phi = c_n \overline{K} \phi^{(n+2)/(n-2)}$$
namely with $\phi = e^{\omega (n-2)/2}$ and $\overline{K}$ the scalar curvature
of the conformally deformed metric (to be a constant). Here $n>2$, the statement in
two dimensions being the known uniformization of a surface.

The second question is also quite deep, see \cite{chang}, where the already difficult case of the
standard $2$-sphere is treated. Note also the necessary condition, related to the
Pohozaev identity, generalized in \cite{gover_ors}. This is the condition $\int_M XK dvol = 0$
for the scalar curvature $K$ and $X$ a conformal vector field, i.e. generating a (local) conformal
transformation. We shall see this in conncection with our study of the heat equation below.

\subsection{Asymptotics of heat kernels and their conformal variation}
Central references here are \cite{ro_pa} and \cite{bran_ors3}. See also the excellent survey paper
\cite{bra_2}. For the basic theory of
elliptic operators on manifolds see \cite{gil}. Consider on a compact Riemannian manifold
$(M, g)$ an elliptic partial differential operator $D$ of order $d$ with a positive definite leading symbol
$p_L$ and total symbol $p$; it could be
on functions, i.e. in the space $L^2(M)$ with respect to the Riemannian volume, or in
$L^2$ - sections of a Hermitian vector bundle $V$ over $M$. We assume $D$ is self-adjoint.
This then has a discrete set of eigenvalues
$\lambda_1 \leq \lambda_2 \leq \dots$ (repeated if there are multiplicities) tending to 
infinity and corresponding eigenfunctions/sections. In fact, we have the Weyl
asymptotic law $\lambda_j \sim C j^{d/n}$ for $j \to\infty$ for some constant $C$. Now
the asymptotics
of these eigenvalues have a profound influence (via the elliptic theory, and also
via Tauberian theory) on the small lime behaviour of the heat equation
$$(\partial_t + D)h(x,t) = 0, \, lim_{t \to 0}h(x,t) = f(x)$$
for $t>0$ and some smooth initial condition $f(x).$
The solution is given by a smooth kernel
$$h(x,t) = \int_M K(t,x,y) f(y) dvol(y)$$
with an asymptotic expansion, as in Lemma 1.8.2 in \cite{gil} as follows
\begin{prop} For a $D$ as above we have for the heat kernel and small positive $t$
$$K(t,x,x) \sim \sum_{k = 0}^{\infty}
 e_k(x) t^{(k-n)/d}$$
in the asymptotic sense, that the error can be made to any power of $t$ by summing enough
terms. Here the important local invariants $e_k(x)$ are given in terms of the total symbol $p$ of $D$
as invariant expressions in the jets of $p$. These invariants vanish for odd $k$.
\end{prop}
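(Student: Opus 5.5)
We follow the standard parametrix construction as in Gilkey's book. Write the heat operator via the resolvent: for $t>0$,
$$e^{-tD} = \frac{1}{2\pi i}\int_\gamma e^{-t\lambda}(D-\lambda)^{-1}\,d\lambda,$$
where $\gamma$ is a contour enclosing the spectrum (a positive half-line, since $D$ is self-adjoint with positive definite leading symbol $p_L$). First, in local coordinates, we construct an approximate resolvent $R(\lambda)$ as a pseudodifferential operator with parameter. Its symbol is $r(x,\xi,\lambda)\sim\sum_{j\ge 0} r_j(x,\xi,\lambda)$, with $r_0=(p_L(x,\xi)-\lambda)^{-1}$. The higher terms are determined recursively by the symbol composition formula $\sigma(AB)\sim\sum_\alpha \frac{1}{\alpha!}\partial_\xi^\alpha a\, D_x^\alpha b$ applied to $\sigma(R(\lambda)(D-\lambda))\sim 1$. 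Give $\xi$ weight $1$ and $\lambda$ weight $d$. By induction, each $r_j$ is a finite sum of terms
$$c(x,\xi)\,(p_L-\lambda)^{-k},$$
where the coefficient $c$ is polynomial in $\xi$ and in the jets of the symbol $p$. Moreover $r_j$ is homogeneous of degree $-d-j$ jointly in $(\xi,\lambda^{1/d})$.

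Second, we define
$$E_j(t,x,y)=\frac{1}{(2\pi)^n}\int e^{i(x-y)\cdot\xi}\,\frac{1}{2\pi i}\int_\gamma e^{-t\lambda} r_j\,d\lambda\,d\xi .$$
On the diagonal we substitute $\xi=t^{-1/d}\eta$ and $\lambda=t^{-1}\mu$. Homogeneity then gives
$$E_j(t,x,x)=t^{(j-n)/d}\,e_j(x),$$
where $e_j(x)=(2\pi)^{-n}\int\frac{1}{2\pi i}\int_\gamma e^{-\mu}r_j(x,\eta,\mu)\,d\mu\,d\eta$. This exhibits $e_j$ as a universal expression in the jets of $p$ at $x$. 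Invariance (independence of coordinates) follows because the heat kernel itself is invariantly defined. Patching with a partition of unity does not affect the diagonal asymptotics, since off-diagonal contributions are $O(t^\infty)$ by the same rescaling combined with integration by parts in $\xi$.

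Third, the error estimate, which is the main analytic obstacle. We need to show that $e^{-tD}-\sum_{j<N}E_j(t)$ has a kernel which is $C^k$ with norm $O(t^{(N-n)/d-\epsilon})$ for suitable $N$ depending on $k$. The plan is as follows:
\begin{itemize}
\item Write $(D-\lambda)^{-1}-R_N(\lambda)=(D-\lambda)^{-1}S_N(\lambda)$, where $S_N$ has order $-N$ with parameter.
\item Use elliptic estimates uniform in $\lambda$ on $\gamma$ to bound this remainder between Sobolev spaces $H^{s}\to H^{s+N-C}$ by $C|\lambda|^{-M}$.
\item Integrate against $e^{-t\lambda}$ along $\gamma$.
\item Convert to a pointwise bound via Sobolev embedding.
\end{itemize}
The care lies in the uniform parameter-dependent estimates; we would follow Lemma 1.8.2 of \cite{gil} for these.

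Finally, to see that $e_k$ vanishes for odd $k$, we use parity in $\xi$. A further induction shows that each coefficient $c(x,\xi)$ in $r_j$ changes by $(-1)^j$ under $\xi\mapsto-\xi$. This is because $D$ is a differential operator: the components of $p$ are polynomials in $\xi$ whose degree of homogeneity matches their parity, and each step of the recursion lowers the homogeneity degree by exactly the number of $\xi$-derivatives or lower-order symbol terms involved. Hence for odd $j$ the $\eta$-integral of an odd function vanishes, so $e_j=0$. This parity argument relies essentially on $D$ being differential rather than merely pseudodifferential.
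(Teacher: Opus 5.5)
Your proposal is correct and takes essentially the same route as the paper, which gives no argument of its own and simply cites Lemma 1.8.2 of \cite{gil}. You reproduce that proof: the parameter-dependent resolvent parametrix, the rescaling $\xi=t^{-1/d}\eta$, $\lambda=t^{-1}\mu$ giving $t^{(j-n)/d}e_j(x)$, and the parity $r_j(x,-\xi,\lambda)=(-1)^j r_j(x,\xi,\lambda)$, which holds because $D$ is a differential operator of even order. Two points in the remainder step need tightening. Integrating a bound $C|\lambda|^{-M}$ against $|e^{-t\lambda}|$ over a fixed contour only gives $O(1)$, so the positive power of $t$ must come from deforming $\gamma$ to the rescaled contour $t^{-1}\gamma$, which holomorphy of the remainder off the spectral half-line allows. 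Also, since you built a left parametrix, the remainder is $S_N(\lambda)(D-\lambda)^{-1}$ rather than $(D-\lambda)^{-1}S_N(\lambda)$.
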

Now we may study how this expansion depends on the metric within a conformal class, namely
by letting $D$ be conformally covariant. Consider a conformal curve of metrics $g(u) = e^{2u\omega}$
with a real parameter $u \in \R$ and a fixed smooth function $\omega \in C^{\infty}(M, \R)$. Now
all quantities depend on $u$, such as connection, curvatures, diffferential operators, 
and corresponding invariants, and the dependence will be smooth in the relevant sense. For a quantity ${\cal C} = {\cal C} (u)$ we shall
denote its derivative at zero by ${\cal C}^{\bullet} = \frac{d}{du} {\cal C} (u)|_{u = 0}.$
The covariance of $D$ means that $D^{\bullet} = - b \omega D + a D \omega$ with $b - a = d$,
the order of $D$. 
Formally we have
\begin{equation*}
\begin{split}(Tr_ {L^2} e^{-tD})^{\bullet} &= \sum _{i = 0}^{\infty} t^{(2i - n)/d}
(\int_M U_i dvol_g )^{\bullet}\\ 
                                                           &= Tr_{L^2} -t D^{\bullet} e^{-tD}\\
                                                           &= Tr_{L^2} -t (-b \omega D + aD\omega)e^{-tD}\\
                                                           & = Tr_{L^2}t(b-a) \omega D e^{-tD}\\
                                                           & = -t\frac{d}{dt} Tr_{L^2} (b-a) \omega e^{-tD}\\
                                                           & = d\sum_{i = 0}^{\infty} \frac{n- 2i}{d} 
t^{(2i - n)/d}\int_M \omega U_i dvol_g.
\end{split}
\end{equation*}
We conclude by equating coefficients to equal powers of $t$ that
$$(\int_M U_i dvol_g)^{\bullet} = (n-2i)\int_M \omega U_i dvol_g.$$
Note that  in even dimension $n$ we get that $\int_M U_{n/2} dvol_g$ is a conformal
invariant - and exactly at the {\it index level}, i.e. the coefficient to the power $t^0$ - we may
call this the {\it conformal index}.

There are some important technical issues in these formal calculations; the equalities should be read as
between asymptotic expansions, and the term by term derivatives can be justified by going back to the
actual construction of the expansion of the heat kernel from the elliptic theory and corresponding Sobolev spaces on $M$; one has to make all estimates locally uniform in $u \in \R$, and also take into account the asymptotic nature in $t \to 0$.
For this see Theorem 3.3 in \cite{bran_ors3}.

Based on the similar analytic facts about elliptic operators we may also consider the spectral zeta function
$$\zeta_D(s) = \sum_{j = 1}^{\infty} \lambda_j^{-s}$$
of a complex variable $s \in \C$, assuming for simplicity that the spectrum of $D$ is strictly
positive, $0 < \lambda_1 \leq \lambda_2 \dots$ where the asymtotics will make this sum convergent
in some right half-plane in $\C$. Indeed, it will be (like the Riemann zeta function) meromorphic
in $s \in \C$ and regular at $s = 0$; this is seen from the heat asymptotics and using a Mellin
transform of this to express the zeta function. In particular we now define the {\it zeta regularized
determinant} of $D$ as
$$det(D) = e^{-\zeta_D'(0)}$$
i.e. in terms of the $s$-derivative at $s = 0$. This is an important quantity in the physics of
strings, and more. Note that for the Riemann zeta function this would say that
$$1\cdot 2 \cdot 3 \cdot \dots = \sqrt{\pi}$$
with this formal understanding of the infinite product.

Now we may in a similar way as for the heat invariants consider the conformal variation
of the quantity $\zeta_D'(0)$ as it depends on the metric during a conformal change.
Using
$$\zeta_D(s) = \frac{1}{\Gamma(s)} \int_0^{\infty} t^{s-1}Tr e^{-tD} dt$$
where we have $Tr e^{-tD} = \sum_{j = 1}^{\infty}e^{-t \lambda_j}$ and also
expressed in terms of the previous heat kernel, we get (write the the order of $D$ as $d = 2l$) 
$$\zeta_D'(0)^{\bullet} = 2l\int_M \omega U_{n/2} dvol_g.$$
Note that for the dimension $n$ odd, this is zero since the heat invariants
vanish at this level, so we get that under the assumptions (of covariant $D$) that
the determinant is a conformal invariant. Note also that with the variation in even dimension 
of the determinant we in effect now have a differential equation for $\zeta_D'(0)$, if we can
write the expression $U_{n/2}$ in an explicit and effective way along a conformal class of metrics. Note finally that  $\zeta_D(0) = \int_M U_{n/2} dvol_g$, i.e. the conformal index.
We summarize in the following (see \cite{bran_ors1} and \cite{bran_ors3})
\begin{thm}
Let $D$ be a positive elliptic differential operator on a compact Riemannian manfold $(M,g)$ of 
(necessarily) even order $d =2l$, and
assume it is built in a universal way from the metric, and that it is conformally covariant (or a positive
integer power
of a conformally covariant operator). Then in its asymptotic heat expansion
$$Tr_ {L^2} e^{-tD}) \sim
 \sum _{i = 0}^{\infty} t^{(2i - n)/d}
(\int_M U_i dvol_g )$$
the coefficient functions ({\it heat invariants}) $U_i$ are invariant polynomials in $R, \nabla R, \dots $,
i.e in the curvature and its covariant deirvatives; along a conformal curve of metrics $e^{2u\omega}g$
the derivative at $u = 0$ is given by
$$(\int_M U_i dvol_g)^{\bullet} = (n-2i)\int_M \omega U_i dvol_g.$$
For the variation of the zeta function (negative log - determinant of $D$) we have
$$\zeta_D'(0)^{\bullet} = 2l\int_M \omega U_{n/2} dvol_g.$$
\end{thm}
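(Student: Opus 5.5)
The plan is to make the formal computation preceding the theorem rigorous and to add the zeta-function step. There are three parts: the structure of the heat invariants, the conformal variation of their integrals, and the variation of $\zeta_D'(0)$.

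\emph{Structure of the $U_i$.} By the Proposition, the local invariants $e_k(x)$ are universal expressions in the jets of the total symbol $p$ of $D$. Since $D$ is built universally from $g$, in normal coordinates the jets of $p$ at $x$ are universal polynomials in $g^{ij}$ and the jets of $g$ at $x$. These jets are in turn polynomials in $R,\nabla R,\dots$ at $x$. Each $e_k$ is invariant under changes of coordinates, hence $O(n)$-invariant. Weyl's invariant theory then shows that $U_i=e_{2i}$ is an invariant polynomial in the curvature and its covariant derivatives, homogeneous of weight $2i$ in derivatives of the metric. The vanishing for odd $k$, stated in the Proposition, justifies indexing by $2i$.

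\emph{Variation of the heat coefficients.} Along $g(u)=e^{2u\omega}g$ the operator $D(u)=e^{-bu\omega}D\,e^{au\omega}$ is, after conjugation by $e^{au\omega}$, a smooth family of elliptic operators on a fixed Hilbert space. Differentiating gives $D^\bullet=-b\omega D+aD\omega$. Duhamel's formula gives $(\operatorname{Tr} e^{-tD})^\bullet=-t\operatorname{Tr}(D^\bullet e^{-tD})$. Cyclicity of the trace, with $e^{-tD}$ smoothing, turns $\operatorname{Tr}(D\omega e^{-tD})$ into $\operatorname{Tr}(\omega De^{-tD})$. The variation is therefore $(b-a)t\operatorname{Tr}(\omega De^{-tD})=-d\,t\frac{d}{dt}\operatorname{Tr}(\omega e^{-tD})$.

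The localized trace $\operatorname{Tr}(\omega e^{-tD})=\int_M\omega K(t,x,x)\,dvol$ has the expansion $\sum t^{(2i-n)/d}\int\omega U_i$. Applying $-d\,t\,d/dt$ multiplies the $i$-th term by $(n-2i)$. Comparing coefficients yields the first formula. If $D=P^m$ with $P$ covariant, the same argument applies with $D^\bullet=m(\ldots)$ via $e^{-tD}=e^{-tP^m}$. Alternatively one notes $D(u)$ is conjugate-covariant with $b-a=md$.

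\emph{Main obstacle.} The hard step is the interchange of $d/du$ with the asymptotic expansion. The expansion of $\operatorname{Tr}e^{-tD(u)}$ must hold with remainders uniform in $u$ near $0$, and it must be differentiable in $u$ term by term. I would establish this by running the parametrix construction for $(D(u)-\lambda)^{-1}$ with symbols depending smoothly on $u$. All symbol estimates are then locally uniform in $u$, so the remainder after $N$ terms is $O(t^{N'})$ together with its $u$-derivative. This is exactly the content of Theorem 3.3 of \cite{bran_ors3}, which I would invoke.

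\emph{Zeta variation.} Write $\zeta_D(s)=\Gamma(s)^{-1}\int_0^\infty t^{s-1}\operatorname{Tr}e^{-tD}dt$, using positivity of $D$. Differentiating in $u$ and using the Duhamel identity gives
$$\zeta_D(s)^\bullet=\frac{-d}{\Gamma(s)}\int_0^\infty t^{s}\frac{d}{dt}\operatorname{Tr}(\omega e^{-tD})\,dt.$$
Integrating by parts, with no boundary terms for $\operatorname{Re}s$ large, gives $d\,s\,\zeta(s;\omega)$, where $\zeta(s;\omega)=\Gamma(s)^{-1}\int t^{s-1}\operatorname{Tr}(\omega e^{-tD})dt$.

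Split the integral at $t=1$ and insert the small-$t$ expansion. This shows $\zeta(s;\omega)$ is meromorphic and regular at $0$, with $\zeta(0;\omega)=\int_M\omega U_{n/2}\,dvol$; the only contribution is the $t^0$ term, because $1/\Gamma(s)$ kills the others. Hence $\frac{d}{ds}\big(ds\,\zeta(s;\omega)\big)|_{s=0}=d\,\zeta(0;\omega)=2l\int_M\omega U_{n/2}\,dvol_g$. The exchange of $\partial_u$ and $\partial_s$ is justified by the same uniformity in $u$.
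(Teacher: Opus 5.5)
Your proposal is correct and follows essentially the same route as the paper. The paper uses the Duhamel/cyclicity computation reducing $(\mathrm{Tr}\, e^{-tD})^{\bullet}$ to $-d\,t\frac{d}{dt}\mathrm{Tr}(\omega e^{-tD})$, compares coefficients of $t^{(2i-n)/d}$, cites Theorem 3.3 of \cite{bran_ors3} for the uniformity in $u$, and obtains the zeta formula from the Mellin transform; your argument only fills in details the paper leaves implicit, namely the invariant-theory description of the $U_i$ and the integration by parts giving $\zeta_D(s)^{\bullet} = d\,s\,\zeta(s;\omega)$.
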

\begin{remark} These heat invariants are not the only source of conformally invariant
functionals of the form $F(g) = \int_M I (x) dvol_g(x)$; they appear also in  the
celebrated Branson $Q$-curvatures, see e.g. \cite{bra_2}. The key object is here
a conformally covariant differential operator on functions of the form $P_n = \Delta^{n/2}$ + lower order
in even dimension $n$, these are fundamental objects in conformal geometry, called GJMS operators,
see \cite{bra_2} p. 18 for a discussion and references.
These remarkable operators were first found in \cite{gjms} and later explained more in the likewise
fundamental papers \cite{fg1} and \cite{fg2}.
The lower order terms are curvature corrections,
so in flat space it is just a power of the Laplacian (also in flat space with a metric with signature). More
generally, there are GJMS operators of the form 
$P_{2k} = \Delta^k$ + lower order, see \cite{bra_2} and references therein. Here any $k$ is
allowed in odd dimension, and in even dimension $n$ only $k \leq n/2$; there is the so-called
Fefferman-Graham obstruction tensor ${\mathcal O}_{ij}$, a symmetric, trace-free divergence-free
two-tensor, which has to vanish (as it does on Einstein manifolds) in order to ensure
existence in even dimension for higher values of $k$. There is a nice relation to
the Branson $Q$-curvature (see below), namely for a metric variation of the metric $g_{ij} = g_{ij}(t),
t \in \R$ with the derivative $h_{ij}$ at $t=0$ we have
$$\frac{d}{dt}_{t=0}\int_M Q dvol = (-1)^{n/2} \frac{n-2}{2}\int_M {\mathcal O}_{ij}h^{ij} dvol.$$
There are
at least two proofs of the GJMS existence whereas actual expressions are very complicated
For further backgrond on the GJMS operators and the $Q$ curvature see \cite{juhl} and 
\cite{juhl2}.
See also the very recent survey
\cite{case_gover} for an expert treatment of both the background of these operators, the
applications and recent progress, and some interesting future perspectives.
These are in particular also intertwining operators between bundles with the appropriate conformal weights.
See the discussion in the Section on the minimal representation of $O(p,q).$
The important relation is (as for the Gauss curvature in two dimensions, $n = 2$)
$$P_n \omega +Q_n(g) =Q_n(e^{2 \omega}) e^{n\omega}.$$
Since $P_n$ is self-adjoint and annihilates constants it follows that the integrated $\int_M Q_n(x) dvol_g(x)$
is conformally invariant. As one might suspect, such local invariants $I(x)$ are very special
expressions, and in fact only involve summing the Pfaffian (whose integral is the Euler characteristic),
pure divergences, and quantities that scale locally in a way opposite the volume under conformal
changes. See \cite{alex} for a tour de force on this topic.
\end{remark}
Let us point out a corollary to the above variational results, namely
a conservation law for conformal vector fields, see \cite{bran_ors1} Theorem 5.4, and also
\cite{gover_ors}. Let $X$ be a conformal vector field on $M$, i.e. it generates a local
group of conformal transformations (sometimes called a conformal Killing vector field), then
it preserves the integrals of the heat invariants as follows
\begin{thm} Let $D$ be as in the previous Theorem and $U_i$ the
corresponding heat invariants, then with $X$ a conformal vector field and for all $i$
$$\int_M X \cdot U_i dvol_g = 0.$$
\end{thm}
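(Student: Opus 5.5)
The plan is to combine the conformal variation formula of the previous Theorem with the naturality of the heat invariants under diffeomorphisms. Let $X$ be a conformal vector field with local flow $\Phi_s$, so that $\Phi_s^* g = e^{2\omega_s} g$ for a smooth family $\omega_s$ with $\omega_0 = 0$. Differentiating at $s=0$ gives $\mathcal{L}_X g = 2\omega g$, where $\omega = \frac{d}{ds}\omega_s|_{s=0}$. Taking traces identifies $\omega = \frac{1}{n}\operatorname{div} X$.

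\textbf{Step 1 (naturality).} Since $D$ is built universally from the metric, $D(\Phi_s^* g) = \Phi_s^* D(g) (\Phi_s^*)^{-1}$, and the heat invariants satisfy $U_i(\Phi_s^*g) = \Phi_s^* U_i(g)$. Hence
$$\int_M U_i(\Phi_s^* g)\, dvol_{\Phi_s^* g} = \int_M U_i(g)\, dvol_g$$
is independent of $s$, because it is the same integral after a change of variables by the diffeomorphism $\Phi_s$. For a merely local flow on compact $M$ the flow of a smooth vector field is global, so this causes no trouble.

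\textbf{Step 2 (conformal variation).} The curve $\Phi_s^* g = e^{2\omega_s} g$ lies in the conformal class of $g$, with first-order variation $2\omega g$. This agrees at first order with the conformal curve $e^{2u\omega}g$ of the previous Theorem, whose variation is also $2\omega g$. The derivative at $s=0$ of $\int_M U_i\, dvol$ depends only on the first-order variation of the metric, since the integrand is a local polynomial in the jets of $g$. The previous Theorem then gives
$$0 = \frac{d}{ds}\Big|_{s=0}\int_M U_i\, dvol = (n-2i)\int_M \omega U_i\, dvol_g .$$

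\textbf{Step 3 (conclusion).} Differentiate directly to get $\frac{d}{ds}\int_M \Phi_s^*(U_i\, dvol_g) = \int_M \mathcal{L}_X(U_i\, dvol_g) = \int_M (XU_i + U_i \operatorname{div} X)\, dvol_g$. By Stokes this equals zero, consistent with Step 1. Since $\operatorname{div}X = n\omega$, we obtain
$$\int_M X\cdot U_i\, dvol_g = -n\int_M \omega U_i\, dvol_g.$$
Combined with Step 2, this vanishes whenever $n \neq 2i$.

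\textbf{Main obstacle: the index level $2i=n$.} Here Step 2 gives no information, because $\int_M U_{n/2}$ is conformally invariant. Instead I would use the second-order (or local) form of the variation. The full statement behind the previous Theorem is the pointwise-in-$\omega$ identity $(\int_M U_i)^\bullet = (n-2i)\int_M \omega U_i$, valid for every $\omega$. At $i=n/2$ I would also use the variation of $\zeta_D'(0)$, which is natural under diffeomorphisms and hence invariant along $\Phi_s$. Its conformal derivative is $2l\int_M \omega U_{n/2}\, dvol_g$ by the Theorem, so this integral vanishes. The identity from Step 3 then gives $\int_M X\cdot U_{n/2}\, dvol_g = 0$ as well.

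The delicate point is justifying that the variation of $\zeta'_D(0)$ along $\Phi_s^*g$ is computed by the conformal formula. This needs the same uniformity in the parameter, as in Theorem 3.3 of \cite{bran_ors3}, and the assumption that $D$ is positive, so that $\zeta_D$ is defined.
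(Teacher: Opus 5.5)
Your proof is correct and follows essentially the route the paper intends. The paper presents the statement as a corollary of the variational formulas of the previous Theorem, citing Branson--\O{}rsted, and your argument supplies exactly that. Diffeomorphism invariance of $\int_M U_i\, dvol_g$, combined with the conformal variation formula, gives $(n-2i)\int_M \omega U_i\, dvol_g = 0$ for $\omega = \frac{1}{n}{\rm div}\,X$. Integration by parts gives $\int_M X\cdot U_i\, dvol_g = -n\int_M \omega U_i\, dvol_g$. At the index level $2i=n$, diffeomorphism invariance of $\zeta_D'(0)$ together with $\zeta_D'(0)^{\bullet} = 2l\int_M \omega U_{n/2}\, dvol_g$ closes the argument.

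Your aside about a ``second-order (or local) form of the variation'' is unnecessary, since the $\zeta_D'(0)$ argument alone settles the case $2i=n$.
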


The main example is here $D = Y$, the Yamabe operator with $d = 2$, or it could also be the
square of the Dirac operator, a prime example of a first-order covariant operator 
(acting on spinor fields). For these some details of the heat coefficients are in \cite{bran_ors2}.
If our operator is the Yamabe operator (or generally of the form $\Delta + aK$), then the heat
invariants are  $U_0 = (4 \pi)^{-n/2}, U_1 = (4 \pi)^{-n/2}(\frac{1}{6} - a)K,
U_2 = (4 \pi)^{-n/2}\frac{1}{180}(90 (\frac{1}{6} - a)^2 K^2 - |r|^2 + |R|^2 
- 30(\frac{1}{5} - a)\Delta K).$ These expressions for $U_i$ at higher levels $i$ very
quickly become complicated; noneless we can use them, especially in low dimensions,
to study corresponding aspects of conformal geometry.

One such is the question of extremal properties of the determinant, in particular for the
Yamabe operator on standard spheres. This can be done rather explicitly for the standard
spheres in low dimension, beginning with the formula of Polyakov, see \cite{bran_ors2},
$$- {\rm log}\frac{det \Delta(e^{2 \omega}g)}{det \Delta(g)} = \frac{1}{12 \pi}
\int_M (|\nabla \omega|^2 + 2K \omega) dvol_g$$
for a two-dimensional closed surface $M$ with Gauss curvature $K$.
This is obtained from the variational formula in Theorem 4 by integration along
a conformal curve of metrics. Let us pause to see how this works concretely.

Fix a metric $g$ on the Riemann surface $M$ (without boundary - actually there is
also an extension of the results to the case of $M$ with boundary), let $\Delta$ be the Laplace 
operator and $K$ the Gauss curvature. Then for the zeta function $\zeta(s) = \zeta_A(s)$ corresponding to
{\it any} conformally covariant elliptic differential operator $A = A(g)$ (or an integer power of such) -
it could be $\Delta$ itself - we consider its dependence on $u \in \R$ as we make the conformal change
of metric $g(u) = e^{2u\omega}g$ for $\omega$ a fixed smooth real function on $M.$
Since we know that the only heat invariant at  this level is the Gauss curvature, we have 
$\zeta^{\prime} (0)^{\bullet} = c \int_M \omega K dvol_g$
(for some constant depending on $A$)
and we can also take the derivative at any $u \in \R$, viewing the conformal factor as 
$e^{2(u+\epsilon)\omega}$ and taking the $\epsilon$-derivative at $\epsilon = 0.$
This leads to
$$\frac{d}{du} \zeta^{\prime}(0) = c \int_M \omega K_{u \omega}e^{2u\omega}dvol_g,$$
where $K_{u \omega}$ is the Gauss curvature of the metric $g(u) = e^{2u\omega}g.$
Recall the formula for the Gauss curvature
$$\Delta u \omega + K = K_{u \omega} e^{2u \omega}$$
which gives us
$$\frac{d}{du} \zeta^{\prime}(0) = c \int_M u \omega \Delta \omega + K \omega dvol_g.$$
We integrate in $u$ from $u = 0$ to $1$ and obtain
$$\zeta_{\overline{A}}^{\prime}(0) - \zeta_{A}^{\prime}(0) = 
c \int_M \frac{1}{2} \omega \Delta \omega + K \omega dvol_g.$$
Here $\overline{A} = A(e^{2\omega}g).$ This is the Polyakov formula, with the constant $c$ 
for the Laplace operator. It is remarkable that for the square of the Dirac operator the
constant has the opposite sign, and the extremal property is the opposite as for the
Laplace operator.

Now it is fortunate that on the standard  two-sphere $S^2$
we may estimate this quantity via an inequality due to Moser and Trudinger to the effect that:
For any metric conformal to the standard one on $S^2$ with the same volume, the
determinant of the Laplacian is maximized exactly at the standard metric itself, or
its pull-back by a conformal map on the sphere. More generally one may think of this 
procedure of maximizing the determinant in a conformal class as explaining the
uniformization of Riemann surfaces, see \cite{osgood}.

Now there are similar results in 4 and 6 dimensions, this time using the Yamabe operator and
the explicit heat invariants. In particular it holds that the determinant of the Yamabe operator
is minimized at the standard metric (and its conformal transforms) among volume-preserving
conformal changes on $S^4$ and maximized on $S^6$ - and it is expected to be minimized on
$S^8$ and so on. See \cite{bra_2} for much more on this, and references therein. In particular
we quote \cite{bran_chang} for the Yamabe operator $Y$ and the square $D^2$ of the Dirac operator.
The fact that one can find explicit expressions of the determinants in four and six
dimensions, relies on the transformation
property of the Branson $Q$-curvatures, similar to that for Gauss curvature; in particular notice
the linear term in $\omega.$ $Q$ is an important term in the heat invariants. For more on this see
\cite{bran_ors2} where the role of the heat invariants and the explicit expressions for
determinants in four dimensions are given, involving integrals of $Q$.

\begin{thm} On $S^4$, the standard metric minimizes 
$det Y$ and maximizes
$det D^2$ among all conformal metrics of a fixed volume.
\end{thm}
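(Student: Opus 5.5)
The plan is to follow the Polyakov-type route already sketched for $S^2$. We integrate the variational formula of the earlier Theorem along the conformal curve $g(u)=e^{2u\omega}g_0$, $u\in[0,1]$, on $S^4$ with $g_0$ the round metric. This gives an explicit formula for $\log\det Y(e^{2\omega}g_0)-\log\det Y(g_0)$ and similarly for $D^2$. Then we show that the resulting functional of $\omega$ has a definite sign under the volume constraint, with equality exactly on the orbit of the conformal group.

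\emph{Step 1: the variation.} By the Theorem on heat invariants,
\[
\zeta_D'(0)^{\bullet}=2\int_M\omega U_2\,dvol_g .
\]
Here $U_2$ is the level-$n/2$ heat invariant, and we take $d=2$ for both $Y$ and $D^2$. On a conformally flat $4$-manifold, $|W|^2=0$, so $U_2$ is a linear combination of $Q_4$ (equivalently the Pfaffian plus a multiple of $\Delta K$) and $\Delta K$. Using the formula above for $U_2$ with $a=\frac16$ for $Y$, we get $U_2=(4\pi)^{-2}\frac1{180}(|R|^2-|r|^2)$. The analogous expression for $D^2$ comes from \cite{bran_ors2}. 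We then write
\[
U_2=\beta_1 Q_4+\beta_2\Delta K+\text{(Pfaffian-type constant terms)},
\]
with explicit constants $\beta_i$ depending on the operator.

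\emph{Step 2: integration along the curve.} Next we differentiate at an arbitrary $u$, exactly as in the two-dimensional computation, and use two transformation laws. The first is $P_4(u\omega)+Q_4=Q_4(u)e^{4u\omega}$, which is linear in $u$. The second is the conformal change of $K$, which is that of the Yamabe equation with $\phi=e^{u\omega}$, together with the change of $\Delta$. Integrating from $0$ to $1$ then expresses $-\log\det$ as a combination of three functionals, as in \cite{bran_chang}:
\[
I[\omega]=\int|W|^2\omega,\qquad
II[\omega]=\int\omega P_4\omega+4\int Q_4\omega-\Bigl(\int Q_4\Bigr)\log\frac{\int e^{4\omega}}{\mathrm{vol}},\qquad
III[\omega]=\int K_\omega^2\,dvol_{\omega}-\int K^2\,dvol .
\]
On $S^4$ we have $I=0$. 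The volume normalization is where the $\log\int e^{4\omega}$ term enters, since $\det$ scales under constant $\omega$ with exponent $\zeta(0)$. The result is
\[
\log\frac{\det A_\omega}{\det A}=\gamma_2\,II[\omega]+\gamma_3\,III[\omega],
\]
and we must verify that $\gamma_2,\gamma_3$ have the same sign for $Y$ and the opposite common sign for $D^2$.

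\emph{Step 3: sign analysis.} For $III$ we use the Yamabe problem solved on the sphere (Obata/Aubin). Among conformal metrics of fixed volume, $\int K^2\,dvol\ge(\int K\,dvol)^2/\mathrm{vol}$ by Cauchy--Schwarz. Moreover, the total scalar curvature is bounded below in terms of the Yamabe invariant, which is attained on $S^4$ only by the round metrics. Hence $III\ge0$, with equality only on the conformal orbit. For $II\ge0$ we invoke Beckner's sharp Moser--Trudinger--Onofri inequality on $S^4$ for $P_4$, the Paneitz operator $\Delta^2+2\Delta$. This is the four-dimensional analogue of the Onofri inequality used on $S^2$, and its equality cases are again $e^{4\omega}g_0$ equal to the pullback of $g_0$ by a conformal map.

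The main obstacle is Step 1 and the bookkeeping in Step 2. We must compute the constants $\gamma_2,\gamma_3$ exactly and check their signs, and this requires the precise $a_2$ heat coefficient for $D^2$ (spinor traces) as well as for $Y$. The sign coincidence (both favourable for $Y$, both reversed for $D^2$) is not structural; it must be checked numerically. If a sign were mismatched, the fallback would be to combine $II$ and $III$ using a joint inequality, but we expect the computation of \cite{bran_chang} to show the matching signs.
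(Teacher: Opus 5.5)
Your route is the paper's route. On $S^4$ the log-ratio of determinants is an explicit combination of Beckner's functional $S_1$ and of $S_2$. Your $II$, with $Q$ normalized as in \cite{bran_chang}, equals $8\pi^2 S_1(4\omega)$ and is nonnegative by Beckner's sharp inequality for $P_4$. Your $III$ is a positive multiple of $S_2(4\omega)$ and is nonnegative by the sharp Sobolev/Yamabe inequality; your Cauchy--Schwarz plus Obata argument for this is fine. The theorem then amounts to the two coefficients having a common sign, opposite for $Y$ and for $D^2$. The paper takes these coefficients ($\beta_1,\beta_2$) from \cite{bran_ors2} and \cite{beck}. In your proposal they are left open, and the one explicit input you give for them would produce the wrong sign pattern for $Y$. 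So as written there is a genuine gap at the decisive step.

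Concretely, for $a=\frac16$ the $\Delta K$ coefficient in $U_2$ is $-30(\frac15-\frac16)=-1$, not $0$; it vanishes only at $a=\frac15$. Thus $U_2=(4\pi)^{-2}\frac1{180}(|R|^2-|r|^2-\Delta K)$. In dimension four $|R|^2=|W|^2+2|r|^2-\frac13K^2$. In the normalization $P_4w+Q_4=\overline{Q}_4e^{4w}$ that you use, $Q_4=\frac16(\Delta K-3|r|^2+K^2)$, which equals $6$ on $S^4$. Hence $|R|^2-|r|^2=|W|^2-2Q_4+\frac13\Delta K$, and
\[
U_2=(4\pi)^{-2}\tfrac{1}{180}\bigl(|W|^2-2Q_4-\tfrac23\Delta K\bigr).
\]
Your $U_2$ would instead give $-2Q_4+\frac13\Delta K$. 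Then the $III$ term would enter with sign opposite to the $II$ term, and the argument for $Y$ would fail.

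With the correct $U_2$, use $\frac{d}{du}\int K_u^2\,dv_u=12\int\omega\,\Delta_uK_u\,dv_u$ and $\int\omega\,Q_4(u)\,dv_u=\int\omega(Q_4+uP_4\omega)\,dv$. On $S^4$ this gives
\[
\log\frac{\det\overline{Y}}{\det Y}=\frac{1}{90(4\pi)^2}\Bigl[\int\omega P_4\omega+2\int Q_4\,\omega+\frac1{18}\Bigl(\int\overline{K}^2\,d\overline{v}-\int K^2\,dv\Bigr)\Bigr].
\]
At fixed volume, Beckner's inequality reads exactly $\int\omega P_4\omega+12\int\omega\ge0$, so the round metric minimizes $\det Y$.

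For $D^2$, the spinor heat coefficient (using $\mathrm{tr}\,\Omega_{ij}\Omega^{ij}=-\frac12|R|^2$) gives $360(4\pi)^2U_2=-7|W|^2+44Q_4+\frac{14}{3}\Delta K$. The $Q_4$ and $\Delta K$ coefficients again share a sign, now opposite to $Y$, which yields the maximum.

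Finally, keep one normalization of $Q$ throughout. With your transformation law, the integration produces $\int\omega P_4\omega+2\int Q_4\omega$, not $+4\int Q_4\omega$. Your $II$ is the Branson--Chang--Yang form with $Q^{BCY}=Q_4/2$. If $Q_4=6$ is inserted into it, $II$ is not nonnegative: it amounts to Beckner's inequality with constant $\frac16$ instead of the sharp $\frac13$, which fails along first spherical harmonics.
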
 

\begin{remark} There are some interesting inequalities behind these statements, now from
Fourier analysis in $\R^n$ in general, and some applications to four dimensions,
in particular on the sphere,
using conformal geometry. See the important paper \cite{beck} Section 3, and \cite{bran_ors2}. Consider the following quantities
$$S_1(F) = \frac{1}{48} \int_{S^4} |\Delta F|^2 d \xi + \frac{1}{24}\int_{S^4} |\nabla F|^2 d \xi
+ \int_{S^4} F d \xi - {\rm log} \int_{S^4} e^F d \xi$$
and
$$S_2(F) = \int_ {S^4} e^{-F/2} |\Delta e^{F/4}|^2 d \xi - \frac{1}{4}\int_ {S^4}|\nabla F|^2 d \xi$$
for a real smooth function $F$ on the four-sphere, and the integration is with respect to normalized surface measure.
Now these are both non-negative, the first (an analogue of the Moser-Trudinger inequality) by
an ingenious argument based on the Hardy-Littlewood-Sobolev inequality, 
and the second by the sharp Sobolev inequality.
We may explain a little of the background for these inequalities, since they also pertain to
representation theory, namely the properties of the convolution operators
$$I_{\lambda}: f(x) \mapsto \int_{\R^n}\frac{f(y)}{|x-y|^{\lambda}}dy,$$
called Riesz operators or Knapp-Stein operators. 
The key estimate is due to E. H. Lieb,
see \cite{beck} and references therein and further discussion, namely
$$|\int_{\R^n \times \R^n}\frac{f(x)g(y)}{|x-y|^{\lambda}} dx dy| \leq A_p ||f||_{L^p(\R^n)}
||g||_{L^p(\R^n)}$$
$$A_p =  \pi^{n/p'}\frac{\Gamma(n(\frac{1}{p} - \frac{1}{2}))}{\Gamma(\frac{n}{p})}
\bigg(\frac{\Gamma(\frac{n}{2})}{\Gamma(n)}\bigg)^{1 - (2/p)}$$
where $\lambda = 2n/p', 1 \leq p < 2, 1/p + 1/p' = 1.$ This HLS inequality is sharp with
extremal functions $f(x) = g(x) = (1+|x|^2) ^{-n/p}$ up to the
action of the conformal group on $\R^n$, which is $SO(n+1,1).$ Now this
inequality should be thought of in the following way, namely we 
have two norms both invariant under an isometric
action of $SO(n+1,1)$, one on the left-hand side is the Hilbert norm for the complementary series, and the
other the $L^p$ norm. The inequality may also be expressed on $S^n$ via
stereographic projection (a conformal map), and furthermore there are limit
inequalities at each endpoint, namely for $p = 2$
$$\int_{S^n}|F(x)|^2 {\rm log}|F(x)| dx \leq \sum_{k= 1}^{\infty} \Delta_k(n) \int_{S^n} |Y_k(x)|^2 dx$$
where $F = \Sigma_{k=0}^{\infty} Y_k$ with $Y_k$ spherical harmonic of degree $k$,
integration with respect to normalized surface measure, and $\int_{S^n} |F(x)|^2 dx = 1$, and
$$\Delta_k(n) = \frac{n}{2} \sum_{l = 0}^{k-1} \frac{1}{\frac{n}{2} + l}.$$
At the other end $p = 1$ one obtains
$${\rm log}\int_{S^n} e^{F(x)} dx  \leq \int_{S^n} F(x) dx + \frac{1}{2n} 
\sum_{k=1}^{\infty} \frac{\Gamma(n+k)}{\Gamma(n)\Gamma(k)}\int_{S^n} |Y_k(x)|^2 dx$$
which corresponds to the quantity $S_1$.
Note that when $n$ is even this can also be written in terms of the GJMS operator $P_n$ as
$${\rm log}\int_{S^n} e^{F(x)} dx  \leq \int_{S^n} F(x) dx + \frac{1}{2n!} 
\int_{S^n}F(P_nF) dx.$$ 
The other $S_2$ will be estimated by a sharp 
Sobolev inequality, another consequence of the HLS. The first inequality from $p = 2$ is
called a logarithmic Sobolev inequality, and indeed leads to the celebrated logarithmic Sobolev
inequality of L. Gross for Gauss measure. Let us finally remark that if we consider the intertwining
operator $I_{\lambda}$ on the sphere $S^n$ (with the same expression as in $\R^n$ now with
$|x-y|$ the Euclidian distance) and normalize it to have $I_{\lambda} 1 = 1$ then the HLS says
that, writing $q = p'$ for the dual exponent, $$||I_{\lambda} F||_q^2 \leq (I_{\lambda} F, F) \leq ||F||_p^2$$
relative to the norms in $L^q(S^n)$ resp. $L^p(S^n)$ and the $L^2(S^n)$ pairing; and
the equality is exactly for $F = 1$ or its orbit under $SO(n+1,1).$ In particular the Knapp-Stein
intertwining operator is a contraction from $L^p$ to its dual $L^q.$
We can be more explicit here, since the eigenvalues for (the normalized) $I_{\lambda}$ 
are known on the spherical harmonics $Y_k$, via the Funk-Hecke formula, or the spectrum 
generating argument in \cite{bran_ola}, namely
$$\gamma_k = \frac{\Gamma(\frac{n}{p})\Gamma(\frac{n}{q} + k)}
{\Gamma(\frac{n}{q})\Gamma(\frac{n}{p} + k)}.$$
So with $F$ as above the HLS is saying that
$$\sum_{k = 0}^{\infty} \gamma_k \int_{S^n}|Y_k(x)|^2 dx \leq ||F||_p^2$$
with equality if and only if $F = 1$ or in its orbit under $SO(n+1,1).$
Let us finally see how the HLS can be expressed in a way to make the GJMS operators
on the sphere appear as residues of the meromorphic intertwining operator family $I_{\lambda}.$
We express the dual inequality as
$$||F||_q^2 \leq \int_{S^n} F (D_{2r}F) dx$$
where $r = n/2 - n/q$ so that the operator $D_{2r}$ has eigenvalues on spherical harmonics of degree $k$
$$\gamma_k^{-1} = \frac{\Gamma(\frac{n}{2} - r)}{\Gamma(\frac{n}{2} + r)}
\frac{\Gamma(k +\frac{n}{2} + r)}{\Gamma(k + \frac{n}{2} - r)}.$$
Here for $r \in \N, r < n/2$ the $D_{2r}$ is a differential operator, up to a constant the GJMS
operator on $S^n$ of order $2r.$

It is a strong result that for the
group $G = SU(n+1,1)$ the analogous estimates hold, see \cite{frank_lieb} for some careful
analysis on the Heisenberg group. 

Now the geometric content of $S_1$ and $S_2$ is that if we let $g$ be the
standard metric and $\overline{g} = e^{F/2} g$ conformally related, then the ratio of
determinants for the Yamabe operators $Y$ resp. $\overline Y$ is governed by these functionals as 
$$det \overline Y / det Y = e^{-\beta_1 S_1(F) - \beta_2 S_2(F)}$$
with $\beta_1 = \beta_2 = -180.$ If we replace the Yamabe operator $Y$ by the square of the Dirac
and find the same ratio, then the values are $7\beta_1 = 22 \beta_2 = 77/180$, and hence the statements
about the minimum resp. maximum. This pattern is expected to continue.

\end{remark}

\subsection{Determinants and their rigidity}In this section we shall see as in \cite{moe} how the conformal
differential geometry and representation theory of the conformal group of the standard $n$-sphere
$S^n$ may be combined to obtain local extremal properties for determinants and other
conformal functionals. For background on determinants see \cite{bran_ors2}. This time we shall
consider arbitrary variations of the metric $g$, and not just in directions of conformal change.
We consider functionals
$$F : {\rm Metr(M)} \mapsto \R$$
on the space of smooth Riemannian metrics on $M$, a compact manifold, equipped with a smooth
topology as in \cite{moe} from Sobolev spaces on the space of sections of the bundle of
symmetric two-tensors $C^{\infty}(S^2 TM)$. The tangent space at a metric $g$ is
naturally identified with $C^{\infty}(S^2 TM)$ and has an inner product
$ <h,k>_g = \int_M (h,k)_g dvol_g$ via the pointwise inner product on tensors.
We now make the {\bf assumptions} (i) $F(\phi^* g) = F(g)$ for all $g$ and all $\phi \in {\rm Diff}_0(M)$,
the identity connected component of the group of diffeomorphisms of $M$, and  (ii)
$F(e^{2\omega}g) = F(g)$ for all $g$ and conformal factors $e^{2\omega}$.
This holds for the functionals considered earlier, namely just the independence of coordinates, and the conformal
invariance. Examples could be a conformal index for a covariant operator,
or the determinant in odd dimension of such an operator.
For such a functional we want to find the gradient (first derivative) and the Hessian
(second derivative); these are obtained by differentiating at a base point $g \in {\rm Metr}(M)$
with respect to $t$ the function
$$F(g + tk) = F(\phi^* g + t\phi^*k) = F(\Omega_{\phi}^2 g + t \phi^*k)
= F(g + t\Omega_{\phi}^{-2}\phi^*k)$$
where $\phi \in G = {\rm Conf}_0(M)$, the group of conformal transformations of $(M,g)$
with $\phi^*g = \Omega_{\phi}^2 g$, and
$k$ a tangent vector at $g$. Note that here appears an action of $G$ on the tangent vectors $k$ -
and that these are sections of a vector bundle over $M$. 
If $\phi^*g = \Omega_{\phi}^2 g$ we can define a representation of $G$ by
$$\pi_{\nu}(\phi^{-1})k = \Omega_{\phi}^{\rho + \nu -2}\phi^*k$$
for $\phi \in G, \, k \in C^{\infty}(S_0^2 TM)$ with $\rho = n/2, \, \nu \in \R.$

Now in the case of the sphere, i.e. 
$M = S^n$, this is a homogenous vector bundle and we may use representation theory
of $G$ to analyze the gradient and the Hession of $F$. 
The gradient is
$$DF_g(k) = \frac{d}{dt} F(g + tk)|_{t = 0}$$
and vanishes for all $k \in C^{\infty}(S^2 TM)$ at a stationary point. The next derivative gives the
Hessian $H_g$ via
$$D^2F_g(k,k) = \frac{d^2}{dt^2}F(g + tk)|_{t = 0} = <k,H_g k>_g$$
where we assume $C^3$ differentiability in the $t$-variable, see \cite{moe}. In the cases we shall do, the
base metric $g$ is stationary and $F$ sufficiently smooth, so that we get the symmetric
operator
$$H_g: C^{\infty}(S^2 TM) \mapsto C^{\infty}(S^2 TM).$$
 
Roughly speaking the tangent directions
from the base point $g$ are (i) directions of conformal change, (ii) directions of change
by diffeomorphisms, and (iii) the remaining directions, where by assumption the functional
is stationary in the first two. Our result will be that the functional (sufficiently smooth) will have a strict maximum
or minimum in a neighbourhood of $g$ in the remaining directions. Specifically we have the
directions tangent to conformal rescalings of $g$
$${\rm conf}_g = \{ \omega g| \omega \in C^{\infty}(M) \}\subset C^{\infty}(S^2TM)$$
and the orthogonal complement $({\rm conf}_g)^{\perp} = C^{\infty}(S^2_0 TM)$,
the trace-free symmetric two-tensors. Similarly, we have the directions tangent to
diffeomorphism pullbacks of $g$
$${\rm diff}_g = \{L_X g | X \in C^{\infty}(TM)\}$$
where $L_X$ denotes the Lie derivative along the vector field $X$. Note that the vector
field $X$ is conformal means $L_X = 2\omega_X g$ with the infinitesimal cocycle $\omega_X$
a smooth function, equal to the divergence of $X$ up to a factor $n$. 
Now the action of the Lie algebra of $G$,
identified with the conformal vector fields ${\rm cvf}(M, g)$ is
$$d\pi_{\nu}(X) k = L_X k + (\rho + \nu -2)\omega_X k$$
namely the derivative of the earlier $\pi_{\nu}$. Note that in general
${\rm cvf}(M, g)$ and $G$ are finite-dimensional, so a Lie algebra resp. a Lie group.
For the sphere $S^n$ the conformal group is $ G = SO(n+1,1)$ via the action by linear fractional
transformations, as we shall see explicitly below.  

We also introduce in general the {\it Ahlfors operator}
$$S_g : C^{\infty}(TM) \mapsto C^{\infty}(S^2_0 TM)$$
by $S_ g X  = L_X g - \frac{2}{n} (div_g X) g$ with $ker S = {\rm cvf} (M,g)$.
From \cite{moe} we have 
\begin{lemma} $S = S_g$ is a first-order intertwining differential operator
$$\Omega_{\phi}^{-2} \phi^* SX = S \phi^* X$$
for $\phi^* g  = \Omega _{\phi}^2 g$, and the image $ran S \subset ker H_ g$
under our assumptions on the corresponding functional $F$. Furthermore,
the dual to $S$ is $S^*k = 2 div k$. 
\begin{remark} These two operators $S$ and $S^*$ are examples of very general
first-order intertwining operators over flag manifolds,
obtained as composition of the covariant derivative and
a projection in a tensor product, see \cite{ors} and references therein; they are
sometimes called generalized (or Stein-Weiss) gradients. Recall the Levi-Civita connection
in Riemannian geometry
$$\nabla: C^{\infty}(E) \mapsto C^{\infty}(T^* \otimes E)$$
and let us consider the bundle $E$ to be the bundle $T = TM$, i.e. the tangent bundle. The
tensor product in the fiber is $\R^n \otimes \R^n$ i.e. highest weight $(1) \otimes (1)$
which decomposes under $SO(n)$ as $(0) \oplus (2) \oplus (1,1)$, namely the trivial, the trace-free
symmetric two-tensors, and the skew-symmetric two-tensors (two-forms). We can now form
${\rm proj} \circ \nabla$ for each projection and obtain the divergence, the Ahlfors $S$ and the
de Rham derivative (identifying a vector field with a one-form). For each there is a unique conformal weight
$p$ making the operator conformally covariant. The operators in the de Rham complex arise in this way,
as does for example the Dirac operator.
\end{remark}
\end{lemma}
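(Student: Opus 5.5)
The statement has three parts: the intertwining property of $S$, the inclusion ${\rm ran}\, S \subset \ker H_g$, and the formula for $S^*$. I would prove them in that order.

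\emph{Intertwining.} Use naturality of the Lie derivative under diffeomorphisms: $\phi^*(L_X g) = L_{\phi^*X}(\phi^* g)$. With $\phi^*g = \Omega_\phi^2 g$, the Leibniz rule gives
$$L_{\phi^*X}(\Omega_\phi^2 g) = \Omega_\phi^2 L_{\phi^*X} g + (\phi^*X)(\Omega_\phi^2)\, g.$$
The second term is pure trace. Likewise $\phi^*(({\rm div}_g X)\, g) = ({\rm div}_{\phi^*g}\phi^*X)\,\Omega_\phi^2 g$, and a short computation gives
$${\rm div}_{\Omega^2 g} Y = {\rm div}_g Y + n\, Y(\log \Omega),$$
so this also differs from ${\rm div}_g(\phi^*X)\,\Omega_\phi^2 g$ only by a trace term. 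Since $S$ is by definition the trace-free part of $L_X g$, all trace terms are annihilated by the trace-free projection. What remains is $\phi^*(SX) = \Omega_\phi^2\, S(\phi^*X)$, which is the claimed identity.

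\emph{Range in the kernel.} By assumption (i), $F$ is constant along the orbit $t \mapsto \phi_t^* g$ for the flow $\phi_t$ of any vector field $X$. By assumption (ii), $F$ is constant along conformal rescalings. Hence $F$ is constant on the set $\{e^{2\omega}\phi^*g\}$, whose tangent directions at $g$ include ${\rm diff}_g + {\rm conf}_g$. In particular $SX = L_X g - \frac{2}{n}({\rm div}\, X)\, g$ is such a tangent direction, so $DF_g(SX) = 0$.

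To get the Hessian statement I would use that $g$ is a stationary point, so that $DF_g = 0$ on all directions. Then differentiate twice the identity $F(\phi_s^*(g + tk)) = F(g + tk)$ in $s$ and $t$. The mixed derivative gives $D^2F_g(L_X g, k) = -DF_g(L_X k) = 0$, and since $D^2F_g(h,k) = \langle h, H_g k\rangle_g$ by polarization, this says $L_X g$ is orthogonal to the range of $H_g$. The analogous argument for conformal invariance, differentiating $F(e^{2s\omega}(g + tk)) = F(g + tk)$, gives $D^2F_g(\omega g, k) = -DF_g(2\omega k) = 0$. Subtracting the trace part, $SX$ is orthogonal to the range of $H_g$. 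Since $H_g$ is symmetric, $H_g(SX) = 0$.

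This step is the main obstacle. It uses the $C^2$/$C^3$ smoothness from \cite{moe} to justify interchanging the $s$- and $t$-derivatives, and it genuinely requires stationarity of $g$; the plan is to state that hypothesis explicitly.

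\emph{Adjoint.} For $k$ trace-free symmetric, write $L_X g = \nabla_i X_j + \nabla_j X_i$ and note that the trace term pairs to zero against $k$. Then
$$\langle SX, k\rangle = \int_M 2\, k^{ij}\nabla_i X_j \, dvol_g = -2\int_M (\nabla_i k^{ij}) X_j \, dvol_g$$
after integrating by parts. Thus $S^*k = 2\,{\rm div}\, k$, up to the sign convention for ${\rm div}$, which I would fix so that it matches the paper's $\Delta = d^*d$.
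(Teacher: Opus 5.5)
Your proposal is correct. The paper gives no argument of its own here and only cites \cite{moe}; your three steps are the standard verification of what it delegates. Naturality of $L_X$, together with the fact that conformally related metrics have the same trace-free projection, gives the intertwining. Differentiating the diffeomorphism- and conformal-invariance identities at a stationary $g$ gives $D^2F_g(L_Xg,\cdot)=D^2F_g(\omega g,\cdot)=0$, hence $H_gS=0$. Integration by parts gives the adjoint.

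There are two cosmetic points. First, the conformal identity yields $D^2F_g(\omega g,k)=-DF_g(\omega k)$, not $-DF_g(2\omega k)$; this is immaterial at a stationary point. Second, for $SX$ to be trace-free, the definition of $S$ forces ${\rm div}_gX=\nabla_iX^i$, and then $S^*k=-2\nabla^ik_{ij}$. So the paper's $2\,{\rm div}\,k$ must be read with the opposite (formal-adjoint) sign convention on two-tensors, as you anticipated.
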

Now in general the symmetries are 
contained in the following main observation
\begin{lemma}
Let $F = F(g)$ be a sufficiently smooth functional on the space of Riemannian metrics on a fixed
compact manifold $M$ and assume $F$ is invariant under conformal change, and under
diffeomorphisms of $M$; then at a stationary point $g$ the Hessian satifies the intertwining relation
$$\pi_{n/2}(\phi) H_g = H_g \pi_{-n/2}(\phi)$$
for all $\phi \in G.$
\end{lemma}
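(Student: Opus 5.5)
Differentiating the invariance identity $F(g+tk)=F(g+t\Omega_\phi^{-2}\phi^*k)$ twice in $t$ at $t=0$ gives the quadratic-form identity $\langle k,H_gk\rangle_g=\langle \tilde k, H_g\tilde k\rangle_g$ with $\tilde k=\Omega_\phi^{-2}\phi^*k$. Polarizing and using the measure change $\phi^*dvol_g=\Omega_\phi^n\,dvol_g$ then converts this into the operator identity. The first step is to justify the invariance identity itself. For $\phi\in G={\rm Conf}_0(M)$ we have $\phi^*g=\Omega_\phi^2g$. By diffeomorphism invariance, $F(g+tk)=F(\phi^*g+t\phi^*k)=F(\Omega_\phi^2g+t\phi^*k)$. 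By conformal invariance, multiplying by $\Omega_\phi^{-2}$ gives $F(g+t\Omega_\phi^{-2}\phi^*k)$. Both sides are $C^3$ in $t$ by assumption. Write $A_\phi k=\Omega_\phi^{-2}\phi^*k$; this is $\pi_{\nu}(\phi^{-1})$ with $\rho+\nu-2=-2$, i.e.\ the weight $\nu=-\rho=-n/2$, so $A_\phi=\pi_{-n/2}(\phi^{-1})$.

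Second, take the second $t$-derivative. Since $A_\phi$ is linear and independent of $t$, $D^2F_g(k,k)=D^2F_g(A_\phi k,A_\phi k)$. Because $g$ is stationary, the Hessian bilinear form is symmetric, and polarization gives $\langle h,H_gk\rangle_g=\langle A_\phi h,H_gA_\phi k\rangle_g$ for all $h,k$. The first-derivative terms are where stationarity matters: $DF_g=0$ is needed so that the second derivative is a genuine invariant quadratic form with no extra terms. Moreover, the $G$-action transports stationarity, because $DF_g(A_\phi k)=DF_g(k)$.

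Third, move $A_\phi$ across the pairing. For tensors, $(\phi^*a,\phi^*b)_g=\Omega_\phi^{-4}\,\phi^*\big((a,b)_g\big)$, since raising two indices with $\phi^*g=\Omega_\phi^2 g$ costs $\Omega_\phi^{-4}$. Also $\int_M\phi^*f\,\Omega_\phi^n\,dvol_g=\int_M f\,dvol_g$. Combining these,
$$\langle A_\phi h,\ell\rangle_g=\int_M(\Omega_\phi^{-2}\phi^*h,\ell)\,dvol_g=\langle h,\ \Omega_\phi^{\,n-2}\circ(\phi^{-1})^*\text{-transport of }\ell\rangle_g,$$
and a short computation identifies the adjoint as $A_\phi^{*}=\pi_{n/2}(\phi)$. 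Equivalently, $\pi_\nu$ and $\pi_{-\nu}$ are dual under $\langle\cdot,\cdot\rangle_g$: in the pairing of $\pi_\nu(\phi^{-1})h$ and $\pi_{-\nu}(\phi^{-1})k$ the exponents $(\rho+\nu-2)+(\rho-\nu-2)-4$ add up to cancel the Jacobian $\Omega^{-n}$. Hence $\langle h,H_gk\rangle=\langle h,\pi_{n/2}(\phi)H_g\pi_{-n/2}(\phi^{-1})k\rangle$ for all $h$. Since $h$ is arbitrary and smooth sections are dense, $H_g=\pi_{n/2}(\phi)H_g\pi_{-n/2}(\phi)^{-1}$, which is the claimed intertwining relation.

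The main obstacle is bookkeeping in the third step: tracking the exponents of $\Omega_\phi$ (the cocycle property $\Omega_{\phi\psi}=\psi^*\Omega_\phi\cdot\Omega_\psi$, the index-raising factor $\Omega^{-4}$, and the Jacobian $\Omega^n$) so that the adjoint of $\pi_{-n/2}(\phi^{-1})$ comes out exactly as $\pi_{n/2}(\phi)$, with the right placement of $\phi$ versus $\phi^{-1}$. I would first verify the duality pairing $\langle\pi_\nu(\phi)h,\pi_{-\nu}(\phi)k\rangle=\langle h,k\rangle$ directly from the definition, and then read off the relation.
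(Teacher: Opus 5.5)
Your argument is the paper's: invariance of the quadratic form under $k\mapsto\Omega_\phi^{-2}\phi^*k=\pi_{-n/2}(\phi^{-1})k$, then the duality $\langle\pi_{-n/2}(\phi)h,\pi_{n/2}(\phi)k\rangle_g=\langle h,k\rangle_g$ coming from $\phi^*dvol_g=\Omega_\phi^n\,dvol_g$, and your conclusion $A_\phi^*=\pi_{n/2}(\phi)$ is right. There is one bookkeeping slip: for covariant two-tensors $(\phi^*a,\phi^*b)_g=\Omega_\phi^{4}\,\phi^*\big((a,b)_g\big)$, not $\Omega_\phi^{-4}$, because $(\phi^*a,\phi^*b)_{\phi^*g}=\phi^*(a,b)_g$ and $(\cdot,\cdot)_{\Omega^2g}=\Omega^{-4}(\cdot,\cdot)_g$; so the exponents total $(\rho+\nu-2)+(\rho-\nu-2)+4=n$ and exactly absorb the Jacobian, whereas your tally gives $n-8$ (also, stationarity is not needed for $D^2F_g(k,k)=D^2F_g(A_\phi k,A_\phi k)$, since $t\mapsto F(g+tk)$ and $t\mapsto F(g+tA_\phi k)$ coincide identically).
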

\begin{proof}We see from the assumed 
invariance of $F$ that
$$<\Omega_{\phi}^{-2} \phi^* k, H_g \Omega_{\phi}^{-2} \phi^* k>_g = <k, H_g k>_g$$
and since $\phi^* dvol_g = \Omega_{\phi}^n dvol_g$ we get
$<h,k>_g = <\pi_ {-n/2}(\phi)h, \pi_{n/2}(\phi)k>_g$ from which
$\pi_{n/2} (\phi^{-1}) H_g \pi_{-n/2}(\phi) = H_g.$
\end{proof}

Now we specialize our compact manifold to be the sphere $S^n$ with the standard metric, and study using representation theory the
bundles in question as homogeneous vector bundles, i.e. viewed as induced representations
over the flag manifold $S^n$. The intertwining operators such as $H_g$ then become rather
explicit, in particular we can find 
their spectrum. Here $G = SO_0(n+1,1)$ with the action on $S^n \times \{1\}$ given by
$$h \cdot y = \frac{h(y,1)}{(h(y,1))_{n+2}} \in S^n \times \{1\}$$
for $h \in G$ and $y \in S^n \subset \R^{n+1}$. Then we get $S^n = G/P$ with
the parabolic subgroup $P = MAN, M \simeq SO(n), A \simeq \R^+, N \simeq \R ^n$
and an Iwasawa decomposition $G = KAN,  K \simeq SO(n+1)$. The roots
are $\pm \alpha$ and $\rho = n \alpha/2$. Characters of $A$ are of the form
$\chi_p (a) = a^p$ and then $\rho$ corresponds to $p = n/2$.

Let $(V_{\sigma}, \sigma)$ be an irreducible representation of $M$ and extend to $A$
by $(a,m) \to a^p \sigma(m), a \in \R^+, m \in SO(n)$,
and denote this representation by $(V_{\sigma}^p, \sigma^p)$. We call $p$
the conformal weight and define the associated vector bundles
$$\V _{\sigma}^p = G \times_P V_{\sigma}^p = {\rm Ind}_{MAN}^{KAN}(\sigma \otimes a^p \otimes 1)$$
$$C^{\infty}(\V_{\sigma}^p) = \{\psi \in C^{\infty}(G)| \psi(xman) = 
a^{-p-\rho} \sigma(m)^{-1} \psi(x)\}$$
with the usual shift in the induced picture. The action
of $G$ is  the left-regular action.
We can find the space $\mathcal{E} (S^n, \V_{\sigma}^p)$
of $K$-finite sections of the bundle by Frobenius reciprocity
$${\rm Hom}_ M(\sigma, \beta_{|M}) \simeq {\rm Hom}_K (\mathcal{E}(S^n, \V_{\sigma}^p), \beta)$$
where $\beta$ is  an irreducible representation of $K$. Here we have the usual (see
\cite{moe}) interlacing patterns
in the branching law from $K$ to $M$, and we are interested in the highest weights
$\sigma = (0) = (0, 0, \dots)$ (the trivial representation), $\sigma = (1) = (1, 0, 0, \dots)$
(the defining representation),
and $\sigma = (2) = (2, 0, 0, \dots)$ (trace-free symmetric two-tensors). The result is
for the $K$ types labeled by highest weights $\beta_1 \geq \beta_2\geq 0 \geq 0 \geq \dots$
\begin{prop}The $K$-types have multiplicity one in all bundles and given independently of $p$ by
$$\mathcal{E} (S^n, \V_{(1)}) = \oplus_{r = 0}^1 \oplus_{l = 0}^{\infty} E_ {1+l, r}$$
$$\mathcal{E} (S^n, \V_{(2)}) = \oplus_{q = 0}^2 \oplus_{j = 0}^{\infty} F_ {2+j, q}$$
where the subscripts in $E$ and $F$ are the highest weights in those representation spaces of
$K \simeq SO(n+1)$. 
Here we assume $n \geq 4$ (see \cite{moe} for the slight modifications when $n = 2, 3$).
\end{prop}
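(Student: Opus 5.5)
The plan is to apply the Frobenius reciprocity isomorphism stated just above, which reduces the computation of the $K$-types to a branching problem from $K \simeq SO(n+1)$ to $M \simeq SO(n)$. First I note that the space of $K$-finite sections does not depend on $p$. Restricting sections to $K$ via $G = KAN$ identifies $C^{\infty}(\V_{\sigma}^p)$ with $C^{\infty}(K \times_M V_{\sigma})$ as $K$-modules; the character $a^{-p-\rho}$ only affects how $AN$ acts, not the $K$-action. Hence $\mathcal{E}(S^n, \V_\sigma^p) \simeq \bigoplus_\beta \beta \otimes {\rm Hom}_M(\sigma, \beta_{|M})$ as a $K$-module, and the multiplicity of $\beta$ equals $\dim {\rm Hom}_M(\sigma, \beta_{|M})$.

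Next I use the classical branching law for $SO(n+1) \downarrow SO(n)$. In its Gelfand--Tsetlin form, it says that an irreducible $\beta = (\beta_1, \beta_2, \dots)$ restricts to $SO(n)$ with multiplicity one on each $\sigma$ whose weights interlace: $\beta_1 \geq \sigma_1 \geq \beta_2 \geq \sigma_2 \geq \dots$, with the usual absolute-value modification in the last entry when $n$ or $n+1$ is even. This already gives multiplicity at most one. It remains to list the $\beta$ that interlace with $\sigma = (1,0,\dots,0)$ and with $\sigma = (2,0,\dots,0)$.

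For $\sigma = (1)$, interlacing forces $\beta_1 \geq 1 \geq \beta_2 \geq 0 = \beta_3 = \dots$. Writing $\beta_1 = 1+l$ and $\beta_2 = r \in \{0,1\}$ gives exactly the types $E_{1+l,r}$. For $\sigma = (2)$, interlacing forces $\beta_1 \geq 2 \geq \beta_2 \geq 0$ with all later entries zero. Writing $\beta_1 = 2+j$ and $\beta_2 = q \in \{0,1,2\}$ gives the types $F_{2+j,q}$.

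The main obstacle is only the bookkeeping at the tail of the interlacing pattern, where the last weight of $SO(\text{even})$ can carry a sign. The hypothesis $n \geq 4$ guarantees that $\beta$ has at least two independent entries beyond $\beta_2$. For $\sigma$ with only one nonzero entry, the interlacing then forces every remaining entry, including the sign-sensitive last one, to be zero, so no additional $\pm$ types appear. For $n = 2, 3$ this fails: for example, $SO(4)$ weights $(\beta_1, \pm\beta_2)$ occur, and those cases need the modifications referred to in \cite{moe}.
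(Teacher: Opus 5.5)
Your proposal is correct and follows the route the paper indicates: Frobenius reciprocity (with the $K$-module structure independent of $p$ via the compact picture) reduces the statement to the multiplicity-free interlacing law for $SO(n+1)\downarrow SO(n)$, exactly as in \cite{moe}. One small slip in your last paragraph: $n\geq 4$ does not give ``at least two entries beyond $\beta_2$'' (for $n=4$, $SO(5)$ has rank $2$ and there are none; for $n=5$ there is exactly one). What $n\geq 4$ actually guarantees is that $\beta_2$ is never the sign-carrying last entry of an $SO(\text{even})$ weight, and that for $n$ even the sign-carrying entry $\sigma_m$ (with $m\geq 2$) is zero; this is all your conclusion needs.
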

Now we observe the intertwining properties of the Ahlfors operator $S$ and the Hessian $H$
for our assumed functional in their action on the $K$-finite sections, forming a Harish-Chandra
module (the consistent action of the Lie algebra $\mathfrak g$ and $K$), here denoted
just by the $\V_{\sigma}^p$.
\begin{prop}We have as intertwining operators
$$S: \V_{(1)}^{-1} \mapsto \V_{(2)}^0$$ 
$$H: \V_{(2)}^0 \mapsto \V_{(2)}^n.$$            
In particular, $S$ and $H$ are diagonalized by the $K$-decompositions above, respectively.
Furthemore, provided that $H$ is non-zero, these two operators form an exact sequence, and
we have the explicit formula $H = c(F) T_0$ with a real constant $c(F)$, and
$T_0$ is the diagonal operator
$${T_0}_{|F_{(2+j, q)}} = \frac{\Gamma(n+j+2)\Gamma(n+q-1)}{\Gamma(j+2)\Gamma(q-1)} \geq 0$$
on the $K$ types $(2+j, q)$
\end{prop}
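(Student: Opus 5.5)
The plan is to derive the intertwining statements directly from the two preceding lemmas, and then get the diagonal form and the exact formula from the multiplicity-one $K$-decomposition combined with a spectrum generating argument in the style of \cite{bran_ola}.

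\textbf{Step 1: intertwining properties.} The Ahlfors lemma gives $\Omega_\phi^{-2}\phi^*SX = S\phi^*X$. Vector fields transform as sections of $\V_{(1)}$, and trace-free tensors acted on by $\Omega_\phi^{-2}\phi^*$ correspond to $\pi_\nu$ with $\rho+\nu-2 = -2$ after accounting for index positions. Matching this against the weight convention $\pi_\nu(\phi^{-1})k=\Omega_\phi^{\rho+\nu-2}\phi^*k$ identifies the weights $-1$ and $0$ (in the shifted normalization of $\V^p_\sigma$) for the source and target of $S$. Similarly, the preceding lemma gives $\pi_{n/2}(\phi)H=H\pi_{-n/2}(\phi)$, and translating $\nu=\mp n/2$ into conformal weights yields $H:\V^0_{(2)}\to\V^n_{(2)}$. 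On the sphere the Hessian is built from $F$ and hence commutes with $K=SO(n+1)$. The $K$-types in the decomposition of the previous proposition occur with multiplicity one, so Schur's lemma shows that $S$ and $H$ act by scalars on each $E_{1+l,r}$ and each $F_{2+j,q}$.

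\textbf{Step 2: exactness.} By the Ahlfors lemma, $\operatorname{ran}S\subset\ker H$, so $H\circ S=0$. The kernel of $S$ is ${\rm cvf}(S^n)\cong\mathfrak{so}(n+1,1)$, whose $K$-types are $E_{1,0}\oplus E_{1,1}$. The $K$-types $F_{2+j,q}$ with $q\le 1$ occur in $\V_{(1)}$ via $E_{2+j,q}$, and $S$ should be nonzero on them. I expect to check this through the Stein--Weiss gradient formula (the remark after the Ahlfors lemma), which gives the scalars of $S$ on $K$-types as explicit nonvanishing linear factors. Hence $\operatorname{ran}S=\bigoplus_{q\le1}F_{2+j,q}$. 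Exactness then amounts to showing that $H$ is injective on the $q=2$ types $F_{2+j,2}$ whenever $H\neq0$, and this will come out of the formula in Step 3.

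\textbf{Step 3: the eigenvalues (main obstacle).} Write $\mathfrak g=\mathfrak k\oplus\mathfrak p$ with $\mathfrak p\cong\R^{n+1}$. An intertwiner $\V^0_{(2)}\to\V^n_{(2)}$ must satisfy $H\,d\pi(X)=d\pi'(X)\,H$ for $X\in\mathfrak p$. The element $d\pi(X)$ maps $F_{\beta}$ into neighbouring $K$-types $\beta\pm e_i$. Its components between $K$-types are scalars depending on the weight $p$ linearly, through a term of the form $(p+\text{Casimir difference}/2)$, as in \cite{bran_ola}.

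The intertwining relation then forces ratio relations
\[
\frac{\eta_{\beta+e_i}}{\eta_{\beta}}=\frac{c_i(\beta)+\tfrac n2}{c_i(\beta)-\tfrac n2},
\]
where $c_i(\beta)$ is the half-difference of Casimir eigenvalues. For the type $(2+j,q)$ these are $j+2+\tfrac{n-1}{2}$ for $i=1$ and $q-1+\tfrac{n-3}{2}$-type expressions for $i=2$. I must verify these exact values; this is the hardest computation.

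Solving the recursion in $j$ and $q$ independently gives the product of Gamma ratios $\Gamma(n+j+2)/\Gamma(j+2)$ and $\Gamma(n+q-1)/\Gamma(q-1)$, up to one overall constant $c(F)$. The factor $1/\Gamma(q-1)$ vanishes for $q=0,1$, which is consistent with Step 2. The eigenvalues are nonzero for $q=2$, which gives exactness, and positive, which gives $T_0\ge0$.

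Connectivity of the $K$-type graph under $\mathfrak p$-moves guarantees that the recursion determines $H$ up to a single scalar.
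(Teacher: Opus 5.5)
Your route is the one the paper takes. The paper's proof only points to \cite{moe} and the spectrum generating method of \cite{bran_ola}, remarking that $T_0$ vanishes exactly on $q=0,1$, the range of $S$. Steps 1 and 2 are fine. In Step 2 you need no Stein--Weiss computation: $\ker S={\rm cvf}$ has $K$-types $E_{1,0}\oplus E_{1,1}$, so by Schur $S$ maps each $E_{2+j,r}$ isomorphically onto $F_{2+j,r}$.

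The gap is in Step 3, where you flagged it, and it is more than a matter of constants. The half Casimir differences for $K=SO(n+1)$ are $c_i(\beta)=\beta_i+\rho_{K,i}+\tfrac12$ with $\rho_K=(\tfrac{n-1}{2},\tfrac{n-3}{2},\dots)$. For $\beta=(2+j,q)$ this gives $c_1=j+2+\tfrac n2$ and $c_2=q-1+\tfrac n2$. Your tentative values would produce quotients such as $\Gamma(n+j+\tfrac32)/\Gamma(j+\tfrac32)$, not $T_0$.

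More importantly, with the correct values the relation cannot be used as a ratio. It must be read as
\[
(c_i(\beta)-\tfrac n2)\,\eta_{\beta+e_i}=(c_i(\beta)+\tfrac n2)\,\eta_\beta ,
\]
and on the edge $(2+j,1)\to(2+j,2)$ the left coefficient is zero. The relation then says $n\,\eta_{(2+j,1)}=0$, and the edge $(2+j,0)\to(2+j,1)$ gives $(n-1)\eta_{(2+j,0)}=-\eta_{(2+j,1)}=0$. So the $q$-recursion does not carry a constant from $q=0$ up to $q=2$, and it cannot be ``solved independently'' of the $j$-recursion. Instead it forces $H=0$ on the rows $q\le1$. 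This gives an independent proof of $HS=0$ and confirms the orientation $\pi_{-n/2}\to\pi_{n/2}$: the opposite sign would kill the $q=2$ row and, together with $HS=0$, force $H=0$. The $q=2$ row is then fixed by $(j+2)\eta_{(3+j,2)}=(n+j+2)\eta_{(2+j,2)}$, so $\eta_{(2+j,2)}=C\,\Gamma(n+j+2)/\Gamma(j+2)$. This is $T_0$ on $q=2$ up to the factor $\Gamma(n+1)$. The $q$-dependence in the stated formula only makes sense read meromorphically, as the paper says.

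So the single free scalar comes from this degenerate edge plus the $j$-recursion on the $q=2$ row, not from ratio-chasing along a connected graph. You also need, from \cite{bran_ola} and \cite{moe}, that the $\mathfrak p$-transition components between the adjacent $K$-types you use are nonzero. Otherwise those edge relations carry no information.
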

We call $T_0$ the {\it universal Hessian}, since it is independent on the functional $F$, and so the
nature of the stationary point (the base metric $g$) is governed by the proportionality constant $c(F)$.
\begin{proof} This is in \cite{moe}, which is based on the general theory of generating the spectrum of
intertwining operators for general homogeneous bundles over $S^n = G/P$, see \cite{bran_ola}. We already saw the
intertwining property of $S$ and $H$, here with the conformal weight $p$ made explicit. The
formula for the spectrum of $T_0$ should be seen as a meromorphic function, and note that
it is zero for  $q = 0, 1$, which is exactly the range of $S$.
\end{proof}

Finding the sign of the constants $c(F)$ for concrete functionals will determine whether we have local
maxima or minima of those functionals; at the same time the inverse function theorem will
give us strict maxima or minima in the directions perpendicular to the directions corresponding
to conformal change and diffeomorphism change, namely directions given by $\V_{(2)}^0/ran S$.
 Let $W$ be the orthogonal complement to
$ran S$, then as in \cite{moe} we have
 \begin{prop} Let $F$ be a sufficiently smooth conformal functional on $S^n$ with the standard metric
$g$ and $c(F) > 0$; then there is an open neighborhood $U \subset {\rm  Metr}(S^n)$ of $g$ such that
$F(g') \geq F(g)$ for all $g' \in U$, and $F(g') > F(g)$ for all $g \in W \cap U$.
\end{prop}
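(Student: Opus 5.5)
Since $F$ is invariant under diffeomorphisms and conformal changes, it descends to a function on the quotient of ${\rm Metr}(S^n)$ by the action of the semidirect product ${\rm Diff}_0(S^n) \ltimes C^{\infty}(S^n)$. The plan is to prove a local Morse-type statement on a slice transversal to the orbit of $g$. I would carry it out in the following steps.

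\textbf{Step 1: build the slice.} Decompose the tangent space at $g$ orthogonally:
$$C^{\infty}(S^2TS^n) = {\rm conf}_g \oplus ({\rm ran}\, S + {\rm diff}_g\text{-part}) \oplus W .$$
Recall that $L_Xg = S X + \frac{2}{n}({\rm div}\, X) g$, so modulo ${\rm conf}_g$ the diffeomorphism directions are exactly ${\rm ran}\, S$. Accordingly, $W = \ker S^* \cap C^{\infty}(S^2_0TS^n)$, the trace-free divergence-free (TT) tensors.

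Now consider the map
$$\Psi(\phi, \omega, k) = e^{2\omega}\phi^*(g + k), \qquad k \in W \text{ small}.$$
Its differential at the identity is $(X,\omega,k) \mapsto L_Xg + 2\omega g + k$. Modulo the finite-dimensional kernel ${\rm cvf}(S^n, g)$ this differential is onto, because $S$ has closed range: $S^*S$ is elliptic, and by Proposition 11 the range of $S$ is exactly the $K$-types with $q = 0,1$. I would work in Sobolev completions $H^s$ and apply an Ebin-type slice theorem (inverse function theorem). This shows that every $g'$ near $g$ can be written as $\Psi(\phi,\omega,k)$ with $k \in W$ small, and hence $F(g') = F(g+k)$.

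\textbf{Step 2: positivity of the Hessian on $W$.} On the slice $k \mapsto F(g+k)$, the point $k = 0$ is stationary, and the second derivative is $\langle k, H_g k\rangle = c(F)\langle k, T_0 k\rangle$ by Proposition 11. On $W$ only the $K$-types $F_{2+j,2}$ occur, since $q = 0,1$ is exactly ${\rm ran}\, S$. There the eigenvalues are
$$\frac{\Gamma(n+j+2)\Gamma(n+1)}{\Gamma(j+2)\Gamma(1)} \sim c\,(j+2)^{n},$$
which is strictly positive and grows like order $n$ in the Casimir. So $T_0$ is comparable on $W$ to $(1+\Delta)^{n/2}$, which gives the coercive estimate
$$\langle k, H_g k\rangle \ge c(F)\, c' \,\|k\|_{H^{n/2}}^2 .$$

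\textbf{Step 3: Taylor expansion and the regularity issue.} Using the assumed $C^3$ smoothness of $F$,
$$F(g+k) = F(g) + \tfrac12\langle k, H_g k\rangle + O(\|k\|^3).$$
The positive quadratic term dominates the cubic remainder for small $k \ne 0$, giving $F(g+k) > F(g)$. Combined with Step 1 this yields $F(g') \ge F(g)$ on $U$, with strict inequality on $W \cap U$ away from $g$ itself.

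This step is the main obstacle, because the norms must match. The Hessian controls $H^{n/2}$, whereas the remainder estimate for functionals such as determinants is naturally in a stronger $C^m$ or $H^s$ norm. I would first try to interpret "sufficiently smooth" as requiring the remainder to be $O(\|k\|_{H^s}\|k\|_{H^{n/2}}^2)$ for some large $s$. This is the tame estimate used in \cite{moe}, and it suffices once $U$ is taken small in the $H^s$ topology. Otherwise, I would fall back on a Lyapunov–Schmidt reduction onto finitely many $K$-types plus a coercive tail.
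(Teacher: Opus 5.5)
Your proposal follows the same route the paper indicates; the paper only sketches it and defers the details to \cite{moe}. In both, an inverse-function/slice argument reduces everything to the transversal directions $W$ (the trace-free divergence-free tensors, i.e.\ the $K$-types $F_{2+j,2}$), where the universal Hessian $c(F)T_0$ is strictly positive and coercive in $H^{n/2}$, and your explicit handling of the mismatch between this $H^{n/2}$ control and the stronger norm governing the cubic remainder is exactly what the hypothesis ``sufficiently smooth'' is there to encode.
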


Finally we can determine the local extremal properties of some of the conformal functionals $F$
we have seen, namely heat invariants and their conformal indices, or the Branson
total $Q$-curvature, namely
$$F(g) = \int_M Q_g dvol_g$$
in even dimensions - this is a prominent case. In addition we have
determinants in odd dimension, here to be considered for conformally covariant operators on the sphere.
It turns out, that to find the sign of the constants $c(F)$ requires some delicate calculations
of the Hessians - essentially pseudodifferential operators of order $n$. Some earlier and important
work here was \cite{ok}. The point is that representation theory has simplified and
extended some of the arguments in \cite{ok}. The results are as in \cite{moe}, the first one already in
\cite{ok} for the Yamabe operator $Y$
\begin{thm} Among metrics on $S^n$ of fixed volume, the standard sphere $(S^{2k+1}, g)$
is a local maximum for $(-1)^{k+1} det Y$, and the standard sphere $(S^{2k}, g)$ is a local
maximum for $(-1)^{k +1}\zeta_Y(0)$.
\end{thm}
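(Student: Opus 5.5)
The plan is to reduce the statement to Proposition 11 (the local extremum criterion) by computing the sign of the constant $c(F)$ of Proposition 10 for the two functionals $F_1(g) = (-1)^{k+1}\det Y(g)$ on $S^{2k+1}$ and $F_2(g) = (-1)^{k+1}\zeta_{Y(g)}(0)$ on $S^{2k}$. Both satisfy the standing assumptions. They are diffeomorphism invariant because $Y$ is natural. They are conformally invariant by Theorem 4: in odd dimension $\zeta_Y'(0)^{\bullet}=2\int\omega U_{n/2}=0$, since the heat invariants vanish at half-integer level, and in even dimension $\zeta_Y(0)=\int U_{n/2}\,dvol$ is the conformal index. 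The volume normalisation only removes the conformal/scaling directions, which are already modded out. The standard metric is stationary by the $G$-symmetry: $DF_g$ is a $G$-invariant functional on $C^\infty(S^2_0TM)$, and the trivial $K$-type does not occur in $\mathcal E(S^n,\V_{(2)})$ by Proposition 9. So Lemma 8 and Proposition 10 apply, giving $H_g = c(F)\,T_0$, and it remains to show $c(F)<0$ for the maximum statement (apply Proposition 11 to $-F$).

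Since $T_0$ is explicit and nonzero on $F_{(2+j,2)}$, it suffices to evaluate $\langle k, H_g k\rangle$ for a single convenient test tensor $k$ lying in the lowest $K$-type $F_{(2,2)}$, which is orthogonal to ${\rm ran}\, S$, and compare with $T_0|_{F_{(2,2)}}=\Gamma(n+2)\Gamma(n+1)/\Gamma(2)\Gamma(1)>0$. To compute the second variation, I will write the variation of $\zeta'(0)$ and of $\zeta(0)$ along $g+tk$ via the Mellin representation $\zeta(s)=\Gamma(s)^{-1}\int_0^\infty t^{s-1}\mathrm{Tr}\,e^{-tY}dt$. Using Duhamel's formula, the second derivative involves $\mathrm{Tr}(\ddot Y e^{-tY})$ and the double Duhamel integral of $\dot Y e^{-\cdot Y}\dot Y e^{-\cdot Y}$. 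Here $\dot Y$ is the trace-free first and second variation of the Laplacian plus scalar curvature term (for trace-free $k$ the volume form is unchanged to first order). On the standard sphere $Y = (\Delta_0+ (n-1)^2/4) - 1/4 = (A+\tfrac12)(A-\tfrac12)$ with $A=\sqrt{\Delta+(n-1)^2/4}$. This has spectrum $(l+\tfrac{n-1}{2})^2-\tfrac14$ on degree-$l$ harmonics, so every trace reduces to a sum over spherical harmonics of explicit Gamma-ratios. These are the matrix coefficients of $\dot Y$ between harmonics of degrees $l$ and $l'$ with $|l-l'|\le 2$, coupled through $k\in F_{(2,2)}$, computable by the Clebsch–Gordan/spectrum-generating technique of \cite{bran_ola}.

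The zeta-regularised sums are then Hurwitz-type zeta values. In odd dimension $n=2k+1$ they yield $\zeta'$-contributions whose sign alternates as $(-1)^k$, giving $c(F_1)<0$. In even dimension $n=2k$ the relevant quantity is the residue at $s=0$, a finite polynomial expression in $k$, with sign $(-1)^{k}$ after the factor $(-1)^{k+1}$, again giving a negative $c(F_2)$. Proposition 11 applied to $-F$ then gives the local maximum modulo conformal and diffeomorphism directions, and the assumed invariances extend this to a full neighbourhood.

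The main obstacle is this explicit evaluation of $c(F)$, i.e.\ the delicate pseudodifferential computation of the Hessian (an operator of order $n$) in the single $K$-type. There I would first try to exploit that only the leading behaviour matters. Because $H_g=c(F)T_0$, I can extract $c(F)$ asymptotically from the large-$j$ behaviour on $F_{(2+j,2)}$, where $T_0\sim j^{n}$. This reduces the problem to the principal symbol of the Hessian, a local heat-type computation in flat space, which avoids summing the full spectral series; the sign of the resulting constant (essentially a Gamma-function ratio with the factor $(-1)^{k}$ from $\Gamma$ at half-integers or from the residue) then decides the claim, matching \cite{ok} for $Y$.
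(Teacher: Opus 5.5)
Your reduction is the one the paper uses: conformal and diffeomorphism invariance of $\det Y$ (odd $n$) and $\zeta_Y(0)$ (even $n$), stationarity of the round metric, the universal Hessian $H_g=c(F)T_0$, and Proposition 11 applied to $-F$. Your stationarity argument, via the absence of the trivial $K$-type in $\mathcal E(S^n,\V_{(2)})$, is fine. Your remark that $c(F)$ can be read off from a single $K$-type, or from the large-$j$ behaviour on $F_{(2+j,2)}$ (i.e.\ from the principal symbol of $H_g$ on transverse trace-free tensors), is exactly why a symbol-level computation as in \cite{ok} decides the question on the sphere. But the whole content of the theorem is the sign (and non-vanishing) of $c(F)$, and you do not establish it. The sentence that the Hurwitz-type values ``alternate as $(-1)^k$'' restates the conclusion; it is not derived. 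The paper does not do this computation either; it imports it from \cite{moe} (and \cite{ok} for $Y$). So as it stands you must either cite that result or actually carry out one of your two routes.

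\emph{Odd case.} The one explicit sign you give does not match what is needed. Since $\det Y=e^{-\zeta_Y'(0)}$ and $g$ is critical, $c(F_1)=(-1)^k\det Y\,c(\zeta_Y'(0))$. So you need $c(\zeta_Y'(0))$ to have sign $(-1)^{k+1}$ (positive on $S^3$, where $\det Y$ is a local maximum). If, as your sentence says, the Hessian of $\zeta_Y'(0)$ had sign $(-1)^k$, you would get $c(F_1)>0$, i.e.\ a local minimum.

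\emph{Even case.} You need $c(\zeta_Y(0))$ of sign $(-1)^k$. Here the constant is local, coming from the leading quadratic part of $U_k$. For instance, for $n=4$ the paper's formula for $U_2$ gives $\zeta_Y(0)=\frac{1}{180(4\pi)^2}\int(\frac32|W|^2-\frac12E)\,dvol$, with $E$ the Gauss--Bonnet integrand, whence $c>0$.

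\emph{Route (i) is a finite computation.} Take a test tensor $h\in F_{(2,2)}$ (rename it; $k$ is already the dimension index). It is trace- and divergence-free, so on the round sphere $\dot K=0$ and $\dot Y=h^{ij}\nabla_i\nabla_j$. The $K$-type $(2,2)$ occurs in $\mathcal H^l\otimes\mathcal H^{l'}$ only for $l=l'$, so $\dot Y$ preserves each $\mathcal H^l$ and the Duhamel term collapses. The second variation of $\zeta_Y(s)$ is then
\[
\sum_l\bigl(-s\lambda_l^{-s-1}\mathrm{tr}(P_l\ddot YP_l)+s(s+1)\lambda_l^{-s-2}\mathrm{tr}(P_l\dot Y^2P_l)\bigr),\qquad \lambda_l=(l+\tfrac{n-1}{2})^2-\tfrac14 .
\]
Its $s$-derivative at $s=0$ (odd $n$) or its value at $s=0$ (even $n$), divided by $T_0|_{F_{(2,2)}}$, gives the constant whose sign you must actually compute.
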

We have here the zeta function $\zeta_Y(s)$ for the Yamabe operator. Though we shall not go into the
definition of the Dirac operator $D$ (it is a generalized gradient, and conformally
covariant) let us record here that the same principles give, see \cite{moe} and \cite{moe2}
\begin{thm} Among metrics on $S^n$ of fixed volume, the standard sphere $(S^{2k+1}, g)$
is a local maximum for $(-1)^k det D^2$, and the standard sphere $(S^{2k}, g)$ is
a local maximum for $(-1)^k \zeta_{D^2}(0)$.
\end{thm}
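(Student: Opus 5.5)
Since the Dirac operator $D$ is a generalized gradient (Remark after Lemma 8), it is conformally covariant with bidegrees $a=(n-1)/2$, $b=(n+1)/2$. Hence $D^2$ is a positive integer power of a conformally covariant operator, and Theorem 4 applies to it. Two consequences follow. First, in odd dimension $n=2k+1$, $\zeta_{D^2}'(0)^{\bullet}=2\int\omega U_{n/2}\,dvol=0$, so $F(g)=\det D^2$ is invariant under conformal change. Second, in even dimension $n=2k$, $F(g)=\zeta_{D^2}(0)=\int U_{k}\,dvol$ (up to the kernel contribution, which vanishes on the sphere since there are no harmonic spinors there) is a conformal invariant. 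Both functionals are diffeomorphism invariant, being built naturally from $g$. The strict positivity of $D^2$ on $S^n$ makes $\det D^2$ well defined near the standard metric; this is just what is needed to fix the volume normalization in odd dimension, where $\det$ is scale invariant anyway.

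Next I would check that the standard metric $g$ on $S^n$ is stationary. The gradient $DF_g$ is a $G$-equivariant functional on $C^\infty(S^2TM)$, so it is given by pairing with a $G$-invariant symmetric two-tensor; on $S^n$ that tensor must be a multiple of $g$. Its pairing with trace-free directions $\V_{(2)}^0$ therefore vanishes, and conformal directions are killed by invariance. Lemma 9 then gives the intertwining relation for $H_g$, and Proposition 11 yields $H=c(F)T_0$ on $\V^0_{(2)}$ with the universal Hessian $T_0\ge 0$ vanishing exactly on $\mathrm{ran}\,S$. Proposition 12 reduces the theorem to the sign claim
$$\mathrm{sign}\, c(\det D^2)=(-1)^{k+1}\ \text{on } S^{2k+1},\qquad \mathrm{sign}\, c(\zeta_{D^2}(0))=(-1)^{k+1}\ \text{on } S^{2k},$$
so that $(-1)^kF$ has $c<0$ and is a local maximum, applying Proposition 12 to $-(-1)^kF$.

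To determine $c(F)$ it suffices to evaluate the Hessian on a single $K$-type with $q=2$, say the lowest one $F_{(2,2)}$, and compare with $T_0$ there. I would compute the second variation of $\zeta_{D^2}$ along $g+tk$ for $k$ a trace-free, divergence-free tensor in that $K$-type. The formula is
$$\tfrac{d^2}{dt^2}\zeta'(0)$$
expressed through $\mathrm{Tr}(\dot{(D^2)}\,\dot{(D^2)}\,(D^2)^{-s-2})$ plus the $\ddot{(D^2)}$ term. For this I need the first and second variations of the Dirac operator under non-conformal metric change (Bourguignon–Gauduchon), namely $\dot D=-\tfrac12\sum k(e_i,e_j)e_i\cdot\nabla_{e_j}$ plus zero-order terms which vanish for divergence-free trace-free $k$. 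Using the known spinor spectrum $\pm(n/2+j)$ on $S^n$ and its decomposition into $K$-types, the trace becomes an explicit sum over Clifford-module matrix coefficients. This sum can be zeta-regularized in closed form via Gamma functions, and its sign can be read off as a ratio of Gamma factors, giving an alternating sign $(-1)^{k+1}$.

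The main obstacle is this last computation: evaluating the regularized second variation, which is essentially the principal symbol data of an order-$n$ pseudodifferential operator, and pinning down its sign. If the direct trace computation gets unwieldy, I would first try the spectrum generating method of \cite{bran_ola}. The idea is that, since $H$ is already known up to $c(F)$, one evaluates $c(F)$ as the residue/finite part of a meromorphic family in the dimension (or in $s$), comparing with the Yamabe case of Theorem 13. There the analogous constant has sign $(-1)^{k}$ in the same normalization, and the expected relative sign flip is the one already seen for Polyakov's formula and in the $S^4$ result of Theorem 7.
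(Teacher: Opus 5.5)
Your reduction is the paper's own route. The paper proves this theorem only by saying that ``the same principles give'' it and citing \cite{moe}, \cite{moe2}. Your chain is exactly that framework: conformal covariance of $D$ gives conformal and diffeomorphism invariance of $\det D^2$ for $n$ odd and of $\zeta_{D^2}(0)$ for $n$ even; the round metric is stationary; $H=c(F)T_0$ with the universal Hessian; the local-extremum proposition finishes. Your sign bookkeeping is also right.

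The gap is the step that carries the whole content of the theorem. You need $\mathrm{sign}\,c(\det D^2)=(-1)^{k+1}$ on $S^{2k+1}$ and $\mathrm{sign}\,c(\zeta_{D^2}(0))=(-1)^{k+1}$ on $S^{2k}$. Without this, the framework yields only a local extremum of undetermined type, provided $c(F)\neq 0$, and you do not show $c(F)\neq 0$ either. You assert that a regularized sum over spinor $K$-types is a Gamma ratio of sign $(-1)^{k+1}$, but nothing is computed. The formula you would start from is also not correct. Since $\dot A$ does not commute with $A=D^2$, the second variation of $\mathrm{Tr}\,A^{-s}$ is not $\mathrm{Tr}(\dot A\dot A A^{-s-2})$ plus an $\ddot A$ term. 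It involves the divided differences $(\lambda_i^{-s-1}-\lambda_j^{-s-1})/(\lambda_i-\lambda_j)$, weighted by $|\langle\dot A\psi_i,\psi_j\rangle|^2$, over all pairs of spinor $K$-types coupled by $F_{(2,2)}$. Continuing that double sum to $s=0$ and reading off its sign is the actual hard analysis, not a routine evaluation.

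Neither fallback closes this.
The spectrum generating method of \cite{bran_ola} determines $T_0$, i.e. the eigenvalues up to a common factor. It is blind to $c(F)$, which is exactly the functional-dependent factor.
The Yamabe/Dirac sign flip is an observed pattern, not an argument. Polyakov's formula and Theorem 7 concern conformal variations of $\det$ in even dimension, a functional that is neither conformally invariant nor one of those considered here. For the present $F$, the conformal and diffeomorphism directions lie in $\ker H$ and carry no information about $c(F)$. The Yamabe theorem rests on a computation of the same kind, which does not transfer to $D^2$.

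The sign must come from an actual computation of the Hessian. The paper takes it from \cite{moe}, \cite{moe2} (building on \cite{ok}), where the Hessian is handled as a pseudodifferential operator of order $n$. Within your setup, the efficient way is to compare principal symbols instead of evaluating on the lowest $K$-type. $T_0$ acts on $F_{(2+j,2)}$ by $n!\,(j+2)(j+3)\cdots(j+n+1)\sim n!\,j^n$. Hence $c(F)$ is the ratio of the order-$n$ principal symbols of $H$ and $T_0$ on transverse trace-free symbols. The symbol of $H$ is local and determined by flat-space data. For $n$ even, $H$ is the Hessian of the local functional $\int U_{n/2}\,dvol$, so only the top-derivative quadratic curvature terms of $U_{n/2}$ enter. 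That computation is where the factor alternating in $k$ has to be produced; until it is done, the sign claim, and with it the theorem, is unproven.
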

Note the curious alternating behaviour in dimension and between determinants and
indices - and between Yamabe and Dirac squared. This should have some link to the
overall behaviour of the zeta functions (and similarly for the behaviour under conformal change
of determinants in even dimensions).
The condition of volume-preserving metrics corresponds to infinitesimal changes
of trace zero. Of course we still have the strict inequalities in directions orthogonal to
conformal or diffeomorphism change. To conclude this discussion of conformal functionals,
let us mention the Branson total $Q$-curvature $F(g) = \int_M Q_g dvol_g$ in even dimensions.
This too has the universal Hessian, which leads to a local maximum at the round sphere, \cite{moe}.

All of the above is only the beginning of much conformal geometry. Indeed, there are extensions of the theory
to conformal geometry of hypersurfaces which also generalize the classical concept of
Willmore energy of surfaces in Euclidian space; see \cite{blitz} and \cite{juhl_ors} for analogues of
GJMS operators and $Q$-curvature.

\section{Intertwining operators and minimal representations} From the beginning representation
theory of Lie groups was connected to differential equations via their symmetries. The special
relativity gave models of particles in terms of representations of the symmetry groups, such
as the Lorentz group $SL(2, \C)$ or the Poincare group $P = SL(2, \C) \times_s \R^4$
(semi-direct product with Minkowski space $\R^4 \simeq \R^{1, 3}$, $SL(2, \C)$ double-covering
$SO_0(1,3)$). Particles and the corresponding irreducible unitary representations of $P$ were labeled
by their spin $j = 0, \frac{1}{2}, 1, \frac{3}{2}, \dots$ and mass $m \geq 0$. It was soon realized
that the particles of mass zero had some additional symmetries, namely that the representations
could be extended to the conformal group of Minkowski space, locally an action of
the indefinite orthogonal group $O(2, 4)$ (strictly speaking the connected component
$SO_0(2,4)$ of the identity).
These representations were
realized in Hilbert spaces of solutions to hyperbolic partial differential
equations, admitting the symmetries from special relativity, such as the wave equation, as we shall see below. 
Also Maxwell's equations for the electro-magnetic field 
were conformally invariant, as were the spin zero and mass zero equations. This corresponds well
with the fact that the classical theory of light is modeling light rays as null geodesics with respect to the
Lorentz metric $ds^2 = dt^2 - dx_1^2 - dx_2^2 - dx_3^2.$ This means that light propagates
along a geodesic whose tangent is annihilated by the metric. One checks that this notion is
conformally invariant, i.e. multiplying the metric (in fact any Lorentz metric
of signature $(+, -, -, \dots ,-)$ in any dimension)
by a smooth positive function will lead to the same (reparametrized) null geodesics.

Now the wave equation is
$$\square \varphi = (\frac{\partial^2}{\partial t^2} - \frac{\partial ^2}{\partial x_1^2}
 - \frac{\partial ^2}{\partial x_2^2} - \frac{\partial ^2}{\partial x_3^2})\varphi = 0$$
with $\varphi = \varphi(t, x_1, x_2, x_3)$. $\square$ is here the Yamabe operator $Y$ in
flat Minkowski space. There is a Hilbert space of (distribution) solutions to the wave equation
carrying an irreducible unitary representation of $SO_0(2,4)$, which on $P$ is
the spin zero, mass zero relativistic particle. This is the minimal representation
of the conformal group that we shall study below for $O(p,q)$. For details see
\cite{Koba1}, \cite{Koba2}, \cite{Koba3}, where the point of view is to give as many
perspectives on this particular representation as possible; each model has its virtues,
in particular for the branching laws to symmetric (and other) subgroups.

\subsection{Constructions of the minimal $O(p,q)$ representation} The wave equation in
special relativity is the Yamabe operator in Minkowski space; now the idea is to extend this to
the Yamabe operator and its kernel, this time on $S^{p-1} \times S^{q-1}$. This model
was introduced in \cite{bin}, and it has the advantage to give the $K$-types in the representation 
from the beginning, $K = O(p) \times O(q)$, unlike the wave equation in flat space, as
explained above. Thus the manifold is $M = S^{p-1} \times S^{q-1}$ with the indefinite
pseudo-Riemannian product metric 
(the standard metric on the first sphere and the negative standard
metric on the second), and $Y_M$ its Yamabe operator, sometimes called an ultra-hyperbolic operator
(hyperbolic for $p = 2$). The entire discussion of natural differential operators carries over,
including connections and curvatures, to the indefinite case, and again the Yamabe operator
is conformally covariant; in particular for $G$ the group of conformal transformations of any
pseudo-Riemannian manifold $(M, g)$ with a left action $x \to hx = L_h x, x \in M, h \in G$,
$L_h^*g = \Omega(h, \cdot)^2 g$, we define a representation of $G$ by
$$\pi_{\lambda}(h^{-1})f(x) = \Omega(h, x)^{\lambda} f(L_h x)$$
say for $f \in C^{\infty}(M)$. As before, the Yamabe operator $Y = Y_M$ will be an
intertwining operator, i.e.
$$Y \pi_{\frac{n-2}{2}}(h) = \pi_{\frac{n+2}{2}}(h) Y, \, h \in G$$
and the kernel of $Y$ is a subspace invariant under $\pi_{\frac{n-2}{2}}(h), h\in G$.
Now it is a general principle that a (pseudo-)Riemannian manifold $M$ of dimension $n$
rarely has a large group ${\rm Conf}(M)$ of conformal transformations, always a Lie group, and this has dimension at most $(n+2)(n+1)/2$. But our product of spheres is such a case.
Indeed, ${\rm Conf}(S^{p-1} \times S^{q-1}) = O(p,q)$, which is seen in the following way.

Consider $\R^{p,q}$ with the metric
$$ds^2 = dx_1^2 + \dots + dx_p^2 - dy_1^2 - \dots - dy_q^2$$
in coordinates $(x,y)$. Let $\Xi$ be the null cone (using the Euclidian metrics in $x$ resp. $y$)
$|x| = |y|, x,y \neq 0$, and $M \simeq S^{p-1} \times S^{q-1}$ be the submanifold $|x| = |y| = 1$ with
the metric induced from $ds^2$. $O(p,q)$ acts on $\R^{p,q}$ preserving the metric 
and $\Xi$ as well as the lines in $\Xi$; this induced an action on $M$ and one checks
it is a conformal action. Calculating the scalar curvatures of $M$ as well as of $S^{p-1}$ and
$S^{q-1}$ one gets the slightly surprising
\begin{lemma} The Yamabe operators are related by
$$Y_M = Y_{S^{p-1}} - Y_ {S^{q-1}}.$$
\end{lemma}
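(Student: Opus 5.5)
Write $n = p+q-2$ for the dimension of $M$ and use the product metric $g_M = g_{S^{p-1}} \oplus (-g_{S^{q-1}})$. Since $Y = \Delta + \frac{n-2}{4(n-1)}K$, the plan is to compare the two ingredients, Laplacian and scalar curvature, separately, and then check that the constants combine correctly.

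First, the Laplacian. For a product of pseudo-Riemannian manifolds the Levi-Civita connection splits, and $\Delta = -g^{ij}\nabla_i\nabla_j$ is the sum of the Laplacians of the factors computed with the factor metrics. Reversing the sign of a metric leaves the Christoffel symbols unchanged and negates $g^{ij}$. Hence the Laplacian of $-g_{S^{q-1}}$ is $-\Delta_{S^{q-1}}$, and
$$\Delta_M = \Delta_{S^{p-1}} - \Delta_{S^{q-1}}.$$

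Second, the curvature. The curvature tensor $R(X,Y)Z$ is unchanged when the metric is negated, and so is the Ricci tensor $r_{ij}$, since it is a trace of $R$. The scalar curvature $K = g^{ij}r_{ij}$, however, changes sign. For a product the Ricci tensor is block diagonal. Using $K_{S^m} = m(m-1)$ for the unit sphere, this gives
$$K_M = (p-1)(p-2) - (q-1)(q-2).$$

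Third, the constants, which is the only real obstacle. The claimed identity needs
$$\frac{n-2}{4(n-1)}K_M = \frac{p-3}{4(p-2)}(p-1)(p-2) - \frac{q-3}{4(q-2)}(q-1)(q-2),$$
that is,
$$\frac{p+q-4}{4(p+q-3)}\bigl[(p-1)(p-2)-(q-1)(q-2)\bigr] = \frac{(p-1)(p-3)-(q-1)(q-3)}{4}.$$
Factor $(p-1)(p-2)-(q-1)(q-2) = (p-q)(p+q-3)$ and $(p-1)(p-3)-(q-1)(q-3) = (p-q)(p+q-4)$. Both sides then equal $(p-q)(p+q-4)/4$, so the identity holds. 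This cancellation of the factor $p+q-3$ against the denominator $n-1$ is the "slightly surprising" point.

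If $p=2$ or $q=2$, the Yamabe operator of $S^1$ is just $\Delta_{S^1}$, since its coefficient vanishes. The computation goes through unchanged, with the $S^1$ terms contributing zero to the curvature side. One should only avoid dividing by $p-2$ or $q-2$ and instead use the simplified forms $(p-1)(p-3)/4$ and $(q-1)(q-3)/4$ directly.
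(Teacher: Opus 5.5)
Your argument for $p,q\ge 3$ is correct and follows the paper's route. The paper only says that one computes the scalar curvatures of $M$, $S^{p-1}$ and $S^{q-1}$. Your splitting of the Laplacian, the sign change of $K$ (but not of $R$ or $r_{ij}$) on the negative-definite factor, and the cancellation of $p+q-3=n-1$ carry out exactly that computation.

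The problem is your final paragraph on $p=2$ or $q=2$, which is both wrong and self-contradictory. For $m=1$ the coefficient $\frac{m-2}{4(m-1)}$ does not vanish; it is undefined. Only $K_{S^1}=0$ vanishes, so the literal formula needs a convention.

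More importantly, if you take $Y_{S^1}=\Delta_{S^1}$, the identity is false whenever exactly one of $p,q$ equals $2$. For $p=2$, $q=4$ you have $n=4$, $c_4=\frac16$ and $K_M=-6$, so $Y_M=\Delta_{S^1}-\Delta_{S^3}-1$. But $\Delta_{S^1}-Y_{S^3}=\Delta_{S^1}-\Delta_{S^3}-\frac34$. In general, for $p=2$ the left side has constant term $-\frac{(q-2)^2}{4}$. The $S^{q-1}$ factor contributes $-\frac{(q-1)(q-3)}{4}=-\frac{(q-2)^2}{4}+\frac14$, so the $S^1$ factor must supply $-\frac14$.

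That $-\frac14$ is exactly what your ``simplified form'' $\frac{(p-1)(p-3)}{4}$ gives at $p=2$. This contradicts your claim in the same paragraph that $Y_{S^1}=\Delta_{S^1}$ and that the $S^1$ factor contributes nothing.

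The correct statement is that for $p=2$ or $q=2$ the lemma holds only with the convention $Y_{S^m}=\Delta_{S^m}+\frac{m(m-2)}{4}$ extended to $m=1$, i.e.\ $Y_{S^1}=\Delta_{S^1}-\frac14$. This convention is implicit in the paper's eigenvalue condition $(a+\frac{p-2}{2})^2-(b+\frac{q-2}{2})^2=0$, since on degree-$k$ harmonics on $S^m$ the operator $Y$ acts by $(k+\frac{m-1}{2})^2-\frac14$. It is the relevant convention because the paper explicitly allows $p=2$ (``hyperbolic for $p=2$'', and $p,q\ge 2$ in the flat model). Only for $p=q=2$ do the two conventions agree, since the constants cancel.
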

Thus it is easy to solve the Yamabe equation on $M$ in terms of spherical harmonics
${\mathcal H}^a(\R^p) \otimes {\mathcal H}^b(\R^q), a, b \in \N$. It amounts to
$$(a + \frac{p-2}{2})^2 - (b + \frac{q-2}{2})^2 = 0.$$
\begin{remark} Thus we see that for $p, q > 2$, $p+q$ must be even for such
solutions to exist.
\end{remark}
We denote by $(\pi^{p,q}, V^{p,q})$ the corresponding representation in the kernel
of $Y = Y_M$ with the Hilbert space $V^{p,q}$ described in the following, see \cite{bin} and \cite{Koba1} 
\begin{thm}let $p, q > 2, p+q \in 2\N$, then $(\pi^{p,q}, V^{p,q})$ is a
unitary irreducible representation of $O(p,q)$ with $K$-types
$$\oplus_{a,b \in \N, a + \frac{p}{2} = b + \frac{q}{2}}
{\mathcal H}^a(\R^p) \otimes {\mathcal H}^b(\R^q)$$
and invariant unitary norm given on the spherical harmonics $F_ {a,b}$ by
$$||F_ {a,b}||^2 = (a + \frac{p-2}{2}) ||F_{a,b}||_{L^2(M)}^2.$$
\end{thm}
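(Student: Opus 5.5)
The plan has three parts: invariance of the kernel of $Y_M$, the $K$-types, and the unitary norm together with irreducibility. The norm is where the real work lies.

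\emph{Invariance.} By Lemma 1 (in the pseudo-Riemannian setting), $Y_M$ intertwines $\pi_{\frac{n-2}{2}}$ with $\pi_{\frac{n+2}{2}}$, where $n=p+q-2$. Hence $\ker Y_M \subset C^\infty(M)$ (or the space of distributions) is $O(p,q)$-invariant under $\pi_{\frac{n-2}{2}}$.

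\emph{$K$-types.} By the Lemma $Y_M = Y_{S^{p-1}} - Y_{S^{q-1}}$, and on $S^{m-1}$ the Yamabe operator acts on ${\mathcal H}^a(\R^m)$ by $a(a+m-2)+\frac{(m-2)^2}{4} = (a+\frac{m-2}{2})^2$. So $\ker Y_M$ is, on the $K$-finite level, exactly the sum of ${\mathcal H}^a(\R^p)\otimes{\mathcal H}^b(\R^q)$ with $a+\frac{p-2}{2} = b+\frac{q-2}{2}$, that is $a+\frac p2 = b+\frac q2$. Each $K=O(p)\times O(q)$-type occurs once, and this set is nonempty precisely because $p+q$ is even.

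\emph{Norm.} I would compute the infinitesimal action of $\mathfrak g = \mathfrak k\oplus\mathfrak p$ on these $K$-types. The space $\mathfrak p \simeq \R^p\otimes\R^q$ is spanned by the generators mixing $x_i$ and $y_j$. Restricted to $M$, the conformal vector field of such an element is $X_{ij} = y_j\partial_{x_i}+x_i\partial_{y_j}$ (projected tangentially), with conformal factor $\omega_{ij} = x_iy_j$ up to a constant. So the action is
$$d\pi(X_{ij}) = X_{ij} + \tfrac{n-2}{2}\,x_iy_j,$$
which I would make precise by differentiating $\pi_\lambda$ as in the formula $d\pi_\nu(X)k = L_Xk+(\cdots)\omega_Xk$ of the rigidity section. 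Multiplication by $x_iy_j$ sends ${\mathcal H}^a\otimes{\mathcal H}^b$ into ${\mathcal H}^{a\pm1}\otimes{\mathcal H}^{b\pm1}$, and the derivative part does the same. On the kernel only the moves $(a+1,b+1)$ and $(a-1,b-1)$ survive, since the mixed moves violate $a-b=\text{const}$. Using the classical formula $x_i\,h = h^+ + \frac{|x|^2}{2a+p-2}\partial_i h$ for $h\in{\mathcal H}^a$, together with the Euler homogeneity relation for the tangential derivatives, I would get explicit scalars for these transitions. For instance, on the component $(a+1,b+1)$ the operator acts like $(a+\frac{p-2}{2}+\frac{n-2}{2}-\ldots)$ times the projection of $x_iy_j F$; I expect a factor of the form $(a+\frac{n}{2})$ up to bookkeeping, and down-steps with the factor $-(a+\frac p2-1-\frac{n-2}{2}+\ldots)$.

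Unitarity amounts to requiring that $d\pi(X)$ be skew-adjoint for a $K$-invariant inner product $\langle F,F\rangle = c_{a,b}\|F\|^2_{L^2(M)}$. This gives a two-term recursion for $c_{a+1,b+1}/c_{a,b}$, and I would verify that $c_{a,b} = a+\frac{p-2}{2}$ solves it and is positive. This is the main obstacle: keeping the normalizations of the projections onto harmonics straight. The $L^2$ norms of the projections of $x_iy_jF$ onto ${\mathcal H}^{a\pm1}\otimes{\mathcal H}^{b\pm1}$ enter via the ratios $\frac{a+p-2}{2a+p-2}$ and $\frac{a}{2a+p-2}$. An alternative I would try if the computation becomes messy is to identify the form with the Knapp--Stein intertwining operator $\pi_{\frac{n-2}{2}}\to\pi_{\frac{n+2}{2}}$ restricted to the kernel, i.e.\ effectively $|Y_{S^{p-1}}|^{1/2}$, whose eigenvalue is $a+\frac{p-2}{2}$. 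Its invariance then gives unitarity at once, by the duality $\langle\pi_\lambda f,\pi_{n-\lambda}g\rangle = \langle f,g\rangle$ used in the proof of the Hessian Lemma.

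\emph{Irreducibility and completion.} Every transition coefficient computed above is nonzero for all $a\ge0$ allowed (the relevant factors are $a+\frac p2$, etc., which are positive). Hence any nonzero $(\mathfrak g,K)$-submodule contains a $K$-type and then all of them, since the $K$-types form a single chain $(a_0+k,b_0+k)$. So the Harish-Chandra module is irreducible and unitarizable, and its completion $V^{p,q}$ integrates to an irreducible unitary representation of $O(p,q)$. For the last step I would cite the standard Harish-Chandra theory, or rather the construction in \cite{bin} and \cite{Koba1}, noting that the component group of $O(p,q)$ merely permutes nothing, because each $K$-type is $O(p)\times O(q)$-stable.
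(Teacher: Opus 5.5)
Your $K$-type argument is the paper's (separation of variables via $Y_M=Y_{S^{p-1}}-Y_{S^{q-1}}$). For unitarity and irreducibility the paper gives no argument and only cites \cite{bin} and \cite{Koba1}. Your explicit $\mathfrak p$-recursion is a sound way to fill this in, and it does close, with two corrections.

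First, $Y_{S^{m-1}}$ acts on $\mathcal H^a(\R^m)$ by $a(a+m-2)+\frac{(m-1)(m-3)}{4}=(a+\frac{m-2}{2})^2-\frac14$, not by $(a+\frac{m-2}{2})^2$. The $\frac14$'s cancel in $Y_M$, so this slip is harmless.

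Second, the sign of the weight term is not ``up to a constant''. Let $V_{ij}$ be the tangential part of $y_j\partial_{x_i}+x_i\partial_{y_j}$. Then $L_{V_{ij}}g_M=-2x_iy_j\,g_M$, so the action is $\pm(V_{ij}-\frac{n-2}{2}x_iy_j)$; equivalently, $V_{ij}$ acts on functions on $\Xi$ homogeneous of degree $-\frac{n-2}{2}$. With the $+$ you wrote, the mixed components no longer vanish on kernel $K$-types, so the kernel is not even invariant. With the correct sign, on a kernel $K$-type (so $2b+q=2a+p$) one finds
\[
\Bigl(V_{ij}-\tfrac{n-2}{2}x_iy_j\Bigr)(h\otimes k)=-(2a+p-2)\,\mathrm{pr}_{a+1}(x_ih)\,\mathrm{pr}_{b+1}(y_jk)+\frac{1}{2a+p-2}\,\partial_ih\,\partial_jk .
\]
For general $(a,b)$ the mixed components carry the factor $\pm(a-b+\frac{p-q}{2})$. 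Use also $(\mathrm{pr}_{a+1}(x_ih),h')_{L^2}=\frac{1}{2a+p}(h,\partial_ih')_{L^2}$ for $h'\in\mathcal H^{a+1}(\R^p)$. Then skew-adjointness for $c_a\|\cdot\|^2_{L^2(M)}$ becomes $c_{a+1}(2a+p-2)=c_a(2a+p)$, whence $c_a\propto a+\frac{p-2}{2}$. Your guessed factors, such as $a+\frac n2$, are not what appears. Positivity rests on the up and down coefficients having opposite signs. Their non-vanishing gives irreducibility, as you say.

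Your fallback, however, fails as stated: every $(\mathfrak g,K)$-intertwiner $T:\pi_{\frac{n-2}{2}}\to\pi_{\frac{n+2}{2}}$ vanishes on $\ker Y$. The mixed coefficient is $a-b+\frac{p-q}{2}$ at weight $\frac{n-2}{2}$ but $a-b+\frac{p-q}{2}-2$ at weight $\frac{n+2}{2}$. Comparing the $(a-1,b+1)$-components of $T\,d\pi(X)$ and $d\pi(X)\,T$ on a kernel $K$-type therefore gives $0=-2\tau_{a,b}$; at the bottom of the chain, use the other mixed move or the up move instead. So no Knapp--Stein operator in that direction restricts to ``$|Y_{S^{p-1}}|^{1/2}$'' on the kernel.

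What does exist is an intertwiner $\pi_{\frac{n+2}{2}}\to\pi_{\frac{n-2}{2}}$ with image in $\ker Y$, the analogue of the Green's function convolution $S$ in the paper's flat model, with form $\langle T\varphi_1,\varphi_2\rangle$. Equivalently, one can use the map $(Y_{S^{p-1}}+\frac14)^{1/2}:\ker Y\to\pi_{\frac{n+2}{2}}/\mathrm{Im}\,Y$. Proving invariance and positivity for either one is the same recursion as above, so this route does not give unitarity ``at once''.
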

This representation is minimal in the sense that its Gelfand-Kirillov dimension is
minimal, it is attached to the minimal nilpotent orbit in the Lie algebra $o(p,q)$,
and its annihilator is the so-called Joseph ideal. One may think of it as an analogue of
the metaplectic representation of the double cover of $Sp(n, \R)$ (strictly speaking the
even part of the metaplectic).

\subsection{Branching laws for the minimal $O(p,q)$ representation}
This model of the minimal representation of $G = O(p,q)$ is convenient for studying its restriction to
symmetric subgroups such as $G' = O(p,q') \times O(q''), q = q' + q''$, in particular for
finding the explicit branching law. The idea is to see a product of a hyperboloid and a sphere
as an open orbit of $G'$ on $M$, and to solve the Yamabe equation in the natural coordinates
for $G'$, conformally related to the $M$ coordinates. Thus we shall separate the variables
in the Yamabe equation as those in a hyperboloid and those in a sphere.

First we shall see a remarkable family of unitary irreducible representations $\pi_{+, \lambda}^{p,q}$
of $O(p,q)$ attached to minimal elliptic orbits in the Lie algebra $o(p,q)$; they are realized in spaces
of solutions to $O(p,q)$-invariant differential equations on the hyperboloid
$$X(p,q) \simeq O(p,q)/O(p-1,q)$$
viewed as $\{(x,y)  \in \R^{p,q}| |x|^2 - |y|^2 = 1 \}$ with the induced metric and 
corresponding measure. The most prominent representations
are the {\it discrete series of the hyperboloid}, namely those that arise as irreducible left-invariant
(non-zero) subspaces of $L^2(X(p,q))$, and they are given as solutions to the Yamabe
equation $Y_ {X(p,q)} f = (-\lambda^2 + \frac{1}{4} ) f$ with $\lambda \in \Z + \frac{p+q}{2}$
and $\lambda > 0.$ Actually, the  $(\mathfrak g, K)$-modules allow a slight extension of the
parameter (see \cite{Koba2}, and in particular fact 5.4 for the following details).
\begin{prop}Let $p > 1, q \neq 0$, and $\lambda \in \Z + \frac{p+q}{2}$ and $\lambda > -1.$
Then there is  a unique irreducible non-zero unitarizable (Harish-Chandra) $(\mathfrak g, K)$-module
$(\pi_{+, \lambda}^{p,q})_ K$ consisting of $K$-finite solutions to    
$Y_ {X(p,q)} f = (-\lambda^2 + \frac{1}{4} ) f$. For $\lambda > 0$ this is a
discrete series for the hyperboloid. Furthermore, the $K$-types are given explicitly as 
the direct sum of spherical harmonics
${\mathcal H}^m(\R^p) \otimes {\mathcal H}^n(\R^q)$
with $m,n \in \N, m-n \geq b, m-n \equiv b \, {\rm mod}\, 2$, where $b = \lambda - \frac{p}{2} + \frac{q}{2} +1.$
\end{prop}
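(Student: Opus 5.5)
Our plan is to reduce the eigenvalue equation on the hyperboloid to the Yamabe equation on $M = S^{p-1}\times S^{q-1}$, exactly as for the minimal representation, and then read off the $K$-types by separation of variables in spherical harmonics.

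\textbf{Step 1: coordinates and separation of variables.} The open dense subset $\{|x|>|y|\}$ of $X(p,q)$ is parametrized by $(x,y) = (\cosh t\,\omega,\ \sinh t\,\eta)$ with $\omega\in S^{p-1}$, $\eta\in S^{q-1}$ and $t>0$. In these coordinates the induced metric is
$$dt^2 + \cosh^2 t\, g_{S^{p-1}} - \sinh^2 t\, g_{S^{q-1}}.$$
Hence $Y_{X(p,q)}$ becomes a second order ordinary differential operator in $t$. It involves the Laplacians on the two spheres, so $K$-finite solutions can be separated as $f = u(t) F_m(\omega) G_n(\eta)$ with $F_m\in\mathcal H^m(\R^p)$ and $G_n\in\mathcal H^n(\R^q)$. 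Since $K=O(p)\times O(q)$ acts only on $(\omega,\eta)$, the $K$-isotypic component of type $\mathcal H^m\otimes\mathcal H^n$ consists of such products. Multiplicity one then reduces to showing that, for fixed $(m,n)$, the radial equation has at most one solution of the admissible kind.

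\textbf{Step 2: the radial equation.} With the substitution $z=\tanh^2 t$ (or $-\sinh^2 t$), the radial equation becomes a Gauss hypergeometric equation. Its exponents at $t=0$ are $0$ and $-n-\tfrac{q-2}{2}$, coming from the behaviour of $\sinh^{n} t$. The local exponents at $t=\infty$ are governed by $\lambda$: the solutions behave like $e^{-(\lambda+\frac{p+q-2}{2}\mp\ldots)t}$, more precisely like $\cosh^{-\rho\pm\lambda}$ with $\rho = \tfrac{p+q-2}{2}$.

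We then ask for the solution that is regular at $t=0$ and has the decaying (or, for $-1<\lambda\le 0$, the correctly normalized) behaviour at infinity. That is, we want a solution regular at both singular points. Such a connection exists exactly when a hypergeometric series terminates, i.e. when the relevant Gauss parameter is a non-positive integer. This is the point where the parity and inequality conditions should appear: the termination condition reads $m-n-b\in 2\N$ with $b=\lambda-\tfrac p2+\tfrac q2+1$. In that case the solution is an explicit Jacobi-type polynomial times powers of $\cosh t$ and $\sinh t$, which also gives uniqueness in each $K$-type.

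\textbf{Step 3: module structure.} Next we check that the sum of these $K$-types is stable under $\mathfrak g$. The action of $\mathfrak p\simeq\R^p\otimes\R^q$ shifts $(m,n)$ by $(\pm1,\pm1)$, so the parity condition on $m-n$ is preserved. The inequality $m-n\ge b$ is preserved because the coefficients of the transitions that would leave the region vanish; this is read off from the Gauss contiguous relations.

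Irreducibility follows by showing that every transition coefficient inside the region is non-zero, so that any $K$-type generates all the others. Unitarizability is handled by the standard recipe used in Theorem 11. For $\lambda>0$ the functions lie in $L^2(X(p,q))$, so the $L^2$ inner product restricted to the module is invariant, and this also gives the discrete series statement. For $-1<\lambda\le 0$ we argue by analytic continuation in $\lambda$: the invariant Hermitian form on each $K$-type is given by explicit Gamma-function ratios, and we check that these remain positive down to $\lambda>-1$.

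\textbf{Main obstacle.} The main obstacle is the endpoint range $-1<\lambda\le0$, where the $L^2$ argument fails and positivity of the continued form must be verified directly from the Gamma factors. A second delicate point is that, at the boundary of the region, the Gamma factors must vanish rather than blow up. For these points we would follow \cite{Koba2}, fact 5.4, where the necessary $\mathfrak p$-action coefficients are computed.
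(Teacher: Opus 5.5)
The paper gives no proof of this proposition beyond citing \cite{Koba2}, Fact 5.4. The remark after the proposition indicates where the substance lies for the exceptional parameters.

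Your separation of variables is the classical route, and it is sound for $\lambda>0$. The $K$-types are the $(m,n)$ for which the solution smooth across $y=0$ lies in $L^2$, and multiplicity one follows. $\mathfrak g$-stability needs no contiguous relations: $L^2(X(p,q))_\lambda$ is a closed $G$-invariant subspace whose $K$-finite vectors are analytic. There are two slips. First, the induced metric is $-dt^2+\cosh^2 t\,g_{S^{p-1}}-\sinh^2 t\,g_{S^{q-1}}$. With your $+dt^2$, the indicial roots at $t=0$ would solve $r^2+(q-2)r+n(n+q-2)=0$ instead of being $n$ and $2-q-n$. Second, $\{|x|>|y|\}$ is all of $X(p,q)$; your chart covers $\{y\neq 0\}$.

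The genuine gap is the range $-1<\lambda\le 0$. By integrality this means only $\lambda=0$ or $\lambda=-\frac12$, which is exactly the non-classical part of the statement. There you establish none of $\mathfrak g$-stability, irreducibility or unitarizability.

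``Analytic continuation in $\lambda$'' has nothing to act on. $\lambda$ runs through a discrete set and the support $\{m-n\ge b\}$ moves with it, so there is no holomorphic family of modules or Hermitian forms on a fixed space. Deferring to \cite{Koba2}, Fact 5.4 is circular, since that is the statement being proved.

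Nor can non-vanishing of the transition coefficients inside the region be assumed. Take $(p,q)=(2,1)$ and $\lambda=-\frac12$, which the hypotheses allow. Then $b=0$, so $(0,0)$ lies in the region. Since $\dim X(2,1)=2$, $Y=\Delta$, and the $O(2)\times O(1)$-invariant solutions are only the constants, because the other radial solution $\arctan(\sinh s)$ is odd in $y$. The constants span a trivial $\mathfrak g$-submodule. So in this case the module with the stated $K$-types is reducible, the proposition itself needs an extra hypothesis, and a uniform ``all coefficients are non-zero'' argument cannot work.

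What is missing is a family with a genuine continuous parameter in which the invariant forms can be computed and continued. One option is the degenerate principal series induced from the maximal parabolic with nilradical $\R^{p-1,q-1}$; by the remark after the proposition, $\pi^{p,q}_{+,\lambda}$ is a quotient of it. There the $K$-types $\mathcal H^m(\R^p)\otimes\mathcal H^n(\R^q)$ are multiplicity-free and the invariant forms are explicit Gamma ratios in the induction parameter. The other option is the description as a module cohomologically induced from $L=SO(2)\times O(p-2,q)$, together with the unitarity, non-vanishing and irreducibility results available for it.
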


There is a similar family $\pi_{-, \lambda}^{p,q}$ that we shall encounter later; it has
similar $K$-types, this time with $m-n \leq b^-, m-n \equiv b^-\,{\rm mod}\, 2$ and
$b^- = -\lambda + \frac{q}{2} - \frac{p}{2} -1.$
\begin{remark} It is a rather subtle point to make these representations unitary for the
exceptional parameters; and they have other characterizations such as cohomologically
induced from $L = SO(2) \times O(p-2,q)$, or as a quotient of a representation induced from
a maximal real parabolic subgroup of $O(p,q)$ whose nilradical is isomorphic to $\R^{p-1, q-1}.$
This parabolic also plays a role in the model of the minimal representation of $O(p,q)$ given in \cite{Koba3}.
\end{remark}
Now we can solve the Yamabe equation on $X(p,q)$ using the
\begin{lemma} There is a conformal embedding
$$X(p,q') \times S^{q''-1
} \subset M \simeq S^{p-1} \times S^{q-1}$$
and the Yamabe operators satisfy
$$Y_ {X(p,q') \times S^{q'' -1}} = Y_{X(p,q')} - Y_{S^{q'' -1}}.$$
\end{lemma}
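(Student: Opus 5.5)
We write $q = q' + q''$ and split $y \in \R^q$ as $y = (y', y'')$ with $y' \in \R^{q'}$ and $y'' \in \R^{q''}$. The proof then has three steps: exhibit the embedding explicitly, compute the pulled-back metric, and compute the scalar curvature of the product.

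The embedding comes from the null cone $\Xi$. A point of $X(p,q') \times S^{q''-1}$ is a pair $((x,y'),\eta)$ with $|x|^2 - |y'|^2 = 1$ and $|\eta| = 1$. Send it to the point $(x, y', \eta) \in \R^{p,q}$. This point lies on $\Xi$, because
$$|x|^2 = 1 + |y'|^2 = |(y',\eta)|^2 .$$
Now project along the ray to $M$:
$$\Phi((x,y'),\eta) = \frac{(x,\, y',\, \eta)}{|x|}.$$
This map is a diffeomorphism onto the open subset $\{y'' \neq 0\}$ of $M$. The inverse is obtained by rescaling a point of $M$ by $1/|y''|$.

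Next we check that $\Phi$ is conformal. Note that $ds^2$ restricted to $\Xi$ is degenerate exactly along the radial direction. Consider two sections $s_1 = (x,y)/|x|$ and $s_2 = (x,y)/|y''|$ of the projection $\Xi \to \Xi/\R^+$. Since they differ by the positive function $f = |y''|/|x|$, we have $s_1 = f s_2$ and $ds_1 = f\,ds_2 + s_2\,df$. Because $s_2$ is null and $s_2$ is orthogonal to $ds_2$ (differentiate $(s_2,s_2) = 0$), the pulled-back quadratic forms satisfy
$$s_1^* ds^2 = f^2\, s_2^* ds^2 .$$
The image of $s_2$ is exactly $\{|x|^2 - |y'|^2 = |y''|^2 = 1\}$, and the metric it inherits from $ds^2$ is the product metric on $X(p,q') \times S^{q''-1}$, with the sphere factor entering with negative sign. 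Hence
$$\Phi^* g_M = |x|^{-2}\, g_{X(p,q') \times S^{q''-1}},$$
with conformal factor $|x|^{-2}$ (equivalently, $f^2$ expressed back in the $s_2$-coordinates). This proves the first assertion.

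For the second assertion, recall that $Y = \Delta + c_n K$ with $c_n = \frac{n-2}{4(n-1)}$ and $n = p+q-2$. On a pseudo-Riemannian product $N_1 \times N_2$ the Laplacian splits, $\Delta = \Delta_1 + \Delta_2$. Reversing the sign of the metric on the second factor changes the sign of both its Laplacian and its scalar curvature. So the formula reduces to the scalar curvature identity
$$c_n K = c_{n_1} K_1 - c_{n_2} K_2,$$
where $n_1 = p+q'-1$ and $n_2 = q''-1$. Here $K_1$ is the scalar curvature of the hyperboloid $X(p,q')$. It has constant curvature, with $K_1 = -n_1(n_1-1)$ up to the overall sign convention on the metric, while $K_2 = n_2(n_2-1)$ for the round sphere. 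The total scalar curvature is $K = K_1 - K_2$. Each Yamabe constant is then $c_m \cdot m(m-1) = \frac{m(m-2)}{4}$, which equals $\left(\frac{m-1}{2}\right)^2 - \frac14$. With $n = n_1 + n_2$, the identity to verify becomes an algebraic relation between $\frac{n(n-2)}{4}$ and the two numbers $\frac{n_1(n_1-2)}{4}$, $\frac{n_2(n_2-2)}{4}$, weighted through $c_n$.

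This is the same curious cancellation that occurs in the earlier Lemma $Y_M = Y_{S^{p-1}} - Y_{S^{q-1}}$. It works there because $S^{p-1}$ and $S^{q-1}$ carry curvatures of opposite sign with equal normalisation (unit spheres). Here the hyperboloid of radius $1$ and the unit sphere likewise carry curvatures of opposite sign and equal magnitude. So the identity should follow from the same computation, with $(n_1, n_2)$ in place of $(p-1, q-1)$. In fact one can shortcut it: the earlier computation only used that the two factors are constant-curvature space forms of curvature $\pm 1$ with opposite signs in the indefinite metric, and that is exactly the situation here. This verification is the main obstacle, because the signs are delicate: the sign conventions for $\Delta$ and $K$ in indefinite signature, and the sign of the hyperboloid's curvature once the metric is taken with signature $(p-1, q')$. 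I would fix the conventions by checking the identity first in the simplest case $q' = 0$, where $X(p,0)$ is a sphere and the statement must reduce to the earlier Lemma.
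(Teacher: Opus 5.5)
Your route is the paper's: send $((x,y'),\eta)\mapsto (x,y',\eta)\in\Xi$, rescale to $M$, read off the conformal factor $|x|^{-1}$, and reduce the Yamabe identity to the computation behind $Y_M = Y_{S^{p-1}} - Y_{S^{q-1}}$. The paper simply calls this last step ``analogous''. Your two-section argument ($s_1 = f s_2$ with $\langle s_2,s_2\rangle = 0 = \langle s_2, ds_2\rangle$) is correct and justifies conformality more explicitly than the paper does. Your reduction to $c_nK = c_{n_1}K_1 - c_{n_2}K_2$ with $K = K_1 - K_2$ is also right.

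The final step, however, rests on a false claim, and you leave it unverified. By your own section argument, the metric $X(p,q')$ inherits from $ds^2$ is the one induced from $\R^{p,q'}$, so there is no sign left to choose. With that metric $X(p,q')$ is the pseudo-sphere $\langle z,z\rangle = 1$. Its unit normal is $z$ itself and its second fundamental form is $-\langle X,Y\rangle z$. The Gauss equation then gives $R(X,Y)Z = \langle Y,Z\rangle X - \langle X,Z\rangle Y$ in every signature. So $X(p,q')$ has the same curvature tensor as the unit sphere, and $K_1 = +n_1(n_1-1)$, not $-n_1(n_1-1)$. Even for $p=1$, where $X(1,q')$ is hyperbolic space, the induced metric is $-g_{H}$, whose scalar curvature is $+q'(q'-1)$.

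The opposite signs in the product come only from the minus sign on the $S^{q''-1}$ factor, exactly as for $S^{p-1}\times S^{q-1}$. They do not come from the hyperboloid being negatively curved; that is precisely why the paper's ``analogous'' is justified. With your value the identity is false. For $n_1 = n_2 = 2$ the left side would be $c_4(-2-2) = -\tfrac23$, while the right side is $0$ because $c_2 = 0$.

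With the correct value the check is short, and you should write it out instead of ``should follow''. Put $a = n_1$, $b = n_2$, $n = a+b$. Then $K_1 - K_2 = a(a-1) - b(b-1) = (a-b)(n-1)$, hence
$c_n(K_1-K_2) = \tfrac{(a-b)(a+b-2)}{4} = \tfrac{a(a-2)}{4} - \tfrac{b(b-2)}{4} = c_{n_1}K_1 - c_{n_2}K_2$.

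Checking only the case $q'=0$ would confirm your conventions but not the curvature for $q'>0$. What carries over to $q'>0$ is the Gauss-equation computation above, which does not depend on signature.
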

\begin{proof}Use coordinates, generalizing to a product of two hyperboloids, $p'+p''= p,
q'+q''= q$,
$$X(p', q') \times X(p'',q'') = \{ ((x',y'), (y'',x''))| |x'|^2 - |y'|^2 = |y''|^2 - |x''|^2 = 1 \}.$$
Now we map this into $\Xi$ by $((x',y'), (y'',x'')) \mapsto (x',x'',y',y'')$ and further into $M$
by $$((x',y'), (y'',x'')) \mapsto (\frac{(x',x'')}{|x|}, \frac{(y',y'')}{|y|})$$
with an open image, dense if $p'' = 0.$ This map is conformal with conformal factor
$|x|^{-1} = |y|^{-1}, x = (x',x''), y = (y',y''),$ with respect to the induced metrics from $\R^{p,q}$.
The statement about the Yamabe operators is analogous to the one for the product of spheres,
and also true here in the generalized setting of two hyperboloids (we shall use that below
considering the discrete spectrum in the branching to $G' = O(p',q') \times O(p'',q'')$, but first we
let $p'' = 0$).
\end{proof}
\begin{thm}The restriction of the minimal representation $(\pi^{p,q}, V^{p,q})$ of the
group $G = O(p,q)$ to the symmetric subgroup $G' = O(p,q') \times O(q''), q = q' + q'',$
is the direct Hilbert  space sum 
$$\oplus_{l = 0}^{\infty} \pi_{+, l + \frac{q''}{2} -1}^{p,q'} \otimes {\mathcal H}^l(\R^{q''}).$$
\end{thm}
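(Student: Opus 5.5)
We will use the conformal embedding of the last Lemma, $\iota: X(p,q')\times S^{q''-1}\to M$, whose image is open and dense in $M$ when $p''=0$.

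\textbf{Step 1: transfer solutions.} By conformal covariance of the Yamabe operator (Lemma 1, valid in the pseudo-Riemannian setting), pulling back by $\iota$ and multiplying by the conformal factor $|x|^{-(n-2)/2}$, $n=p+q-2$, identifies $\ker Y_M$ (in the $\pi_{(n-2)/2}$ realization) with solutions of $Y_{X(p,q')\times S^{q''-1}}f=0$. The map $\iota$ is $G'$-equivariant: $O(p,q')$ acts on the $(x,y')$-variables and $O(q'')$ on $y''$. Hence this identification intertwines $\pi^{p,q}|_{G'}$ with the natural geometric $G'$-action on the product.

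\textbf{Step 2: separation of variables.} By the Lemma, $Y_{X(p,q')\times S^{q''-1}}=Y_{X(p,q')}-Y_{S^{q''-1}}$. Expanding a solution $f$ in spherical harmonics on $S^{q''-1}$ gives $f=\sum_l \sum_j f_{l,j}(x,y')\,h_{l,j}(y'')$ with $h_{l,j}\in\mathcal H^l(\R^{q''})$. Since
$$Y_{S^{q''-1}}h=\big((l+\tfrac{q''-2}{2})^2-\tfrac14\big)h,$$
each coefficient satisfies
$$Y_{X(p,q')}f_{l,j}=\big((l+\tfrac{q''-2}{2})^2-\tfrac14\big)f_{l,j}.$$
Up to the sign convention for $Y_{X}$ (indefinite metric), this is the eigen-equation in the Proposition on $\pi^{p,q'}_{+,\lambda}$ with $\lambda=l+\frac{q''}{2}-1$. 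This $\lambda$ lies in $\Z+\frac{p+q'}{2}$ because $p+q$ is even, and satisfies $\lambda>-1$ since $q''\ge1$. The $K'$-finite solutions on $X(p,q')$ singled out by the Proposition then give exactly the $(\mathfrak g',K')$-module $(\pi^{p,q'}_{+,\lambda})_{K'}$. This yields an injective $G'$-map
$$\bigoplus_l \pi^{p,q'}_{+,l+\frac{q''}{2}-1}\otimes\mathcal H^l(\R^{q''})\to V^{p,q}$$
at the level of $K'$-finite vectors, and these summands are mutually orthogonal, being distinct $O(q'')$-isotypic components.

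\textbf{Step 3: exhaustion and $K'$-type matching.} To show the sum exhausts $V^{p,q}$ and that no continuous spectrum appears, we count $K'$-types. The $K$-types of $V^{p,q}$ are $\mathcal H^a(\R^p)\otimes\mathcal H^b(\R^q)$ with $a+\frac p2=b+\frac q2$. Branching $\mathcal H^b(\R^q)$ from $O(q)$ to $O(q')\times O(q'')$ by the classical law gives $\bigoplus \mathcal H^{n}(\R^{q'})\otimes\mathcal H^l(\R^{q''})$ over $n+l\le b$, $n+l\equiv b \pmod 2$. We then compare with the $K'$-types $\mathcal H^m(\R^p)\otimes\mathcal H^n(\R^{q'})\otimes\mathcal H^l(\R^{q''})$, $m-n\ge b_\lambda$, $m-n\equiv b_\lambda\pmod 2$, with
$$b_\lambda=\lambda-\tfrac p2+\tfrac{q'}2+1=l+\tfrac{q-p}{2}.$$
Setting $m=a$, the condition becomes $a-n\ge l+a-b$, i.e.\ $n+l\le b$, with the same parity. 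So the multiplicities agree exactly. Since $V^{p,q}$ is admissible for the $O(q'')$-action in each isotypic component, and the $l$-th isotypic component is an irreducible unitary $O(p,q')$-representation whose $K'$-finite part matches, the Hilbert direct sum follows by completing. Unitarity is transported by uniqueness of the invariant inner product on irreducible modules, up to scalar.

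\textbf{Main obstacle.} The hardest part is analytic rather than combinatorial. We must justify that the image of $V^{p,q}$ under the conformal transfer lands in the right function spaces on $X(p,q')$, so that the coefficients $f_{l,j}$ are $K'$-finite solutions of the type in the Proposition, including the exceptional small $\lambda$ with $-1<\lambda\le 0$ that occur for $q''\le 2$. We must also rule out spurious solutions that do not extend across the complement of the dense image. I would handle this by working purely with $K'$-finite vectors and the $K'$-type count of Step 3, which avoids $L^2$ estimates, and by appealing to the uniqueness statement in the Proposition to identify each isotypic piece.
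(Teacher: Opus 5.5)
Your Steps 1 and 2 coincide with the paper's proof. Both use the conformal embedding $X(p,q')\times S^{q''-1}\subset M$, the splitting $Y=Y_{X(p,q')}-Y_{S^{q''-1}}$, the matching of $\lambda^2$ with $(l+\frac{q''-2}{2})^2$, and the checks $\lambda\in\Z+\frac{p+q'}{2}$ and $\lambda>-1$. The paper stops there and refers to \cite{Koba2}, Theorem 7.1, for membership in $V^{p,q}$ and for exhaustion. Your Step 3 replaces that citation with a $K'$-type count via the classical branching of $\mathcal H^b(\R^q)$ to $O(q')\times O(q'')$. This is a genuinely different, more elementary way to finish, and it yields more than you say:
\begin{itemize}
\item A $K'$-type $(m,n,l)$ forces $a=m$ and $b=m+\frac{p-q}{2}$, so each $K'$-type occurs at most once in $V^{p,q}$. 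Hence the restriction is $K'$-admissible and discretely decomposable, and its $K'$-finite vectors are exactly the $K$-finite ones. Continuous spectrum is therefore excluded without any $L^2$ estimate.
\item The parity condition $m-n\equiv b_\lambda$ is what actually fixes the sign of $\lambda$. The eigen-equation only sees $\lambda^2$. For $q''=1$, or for $q''=3$ with $l=0$, both $\lambda=\pm\frac12$ satisfy $\lambda>-1$ and give the same eigenvalue. The paper's one-line matching passes over this point.
\end{itemize}

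Two steps need reorganizing.

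First, the injective map in Step 2 runs the wrong way at that stage. Separation of variables maps $V^{p,q}_K$ into finite sums $\sum_{l,j} f_{l,j}\otimes h_{l,j}$ with eigenfunction coefficients $f_{l,j}$ on $X(p,q')$. Nothing yet says that a $\pi^{p,q'}_{+,\lambda}$-solution extends across the complement of the image to an element of $V^{p,q}$. That only follows once the $l$-th multiplicity space $W_l$ has been identified.

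Second, you assert the irreducibility of $W_l$; it has to be proved. The argument is:
\begin{itemize}
\item $W_l$ is admissible and unitarizable (it sits inside $V^{p,q}$), hence a direct sum of irreducible unitarizable modules of $K$-finite eigenfunctions.
\item By the uniqueness in the Proposition, with your parity condition selecting the correct sign of $\lambda$, each summand is $(\pi^{p,q'}_{+,\lambda})_K$.
\item The $K$-types of $W_l$ have multiplicity one and coincide with those of $\pi^{p,q'}_{+,\lambda}$, so there is exactly one summand.
\end{itemize}
With these adjustments your argument is complete modulo the Proposition and the classical branching law.
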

\begin{proof} Solving the Yamabe equation amounts to the condition on $\pi_{+,\lambda}^{p,q'}
\otimes {\mathcal H}^l(\R^{q''})$ as follows
$$\frac{1}{4} - \lambda^2 + (l + \frac{q''-2}{2})^2 -\frac{1}{4} = 0$$
so we get that $\lambda = l + \frac{q''}{2} -1$ which must be in $\Z + \frac{p+q'}{2}$; but this is
satisfied since $p+q'-q'' = p+q-2q''$ is even. Also we need the condition $\lambda > -1$ which is also
true; note that for $q''\geq 3$ all these elliptic representations have $\lambda > 0$ and hence they
are all discrete series for the hyperboloid. That the solutions we have found  are all in the Hilbert space,
and that the direct sum exhausts the Hilbert space takes some more arguments, see \cite{Koba2}
Theorem 7.1.
\end{proof}
\begin{remark} We can also write the corresponding unitary strucure, in effect the Plancherel formula,
namely if we write $F = \Sigma_{l = 0}^{\infty} F_l^{(1)} F_l^{(2)}$ as in the Theorem, then
$$||F||^2 = \Sigma_{l=0}^{\infty} (l + \frac{q''}{2} - 1) ||F_l^{(1)}||_{L^2(X(p,q'))}^2 
||F_l^{(2)}||_{L^2(S^{q'' - 1})}^2$$
restricting to $q'' \geq 3.$ Note the similarity with the case of a product of spheres.
\end{remark}
Already the case of $q'' = 1$ in the above Theorem is interesting; here there are only two terms
in the sum, since spherical harmonics in one dimension are the constants and the function $x$, i.e
only $l = 0, 1.$ This means that $V^{p,q}_K$, the $K$ finite elements, are realized inside
$C^{\infty}(X(p,q-1))$, namely (here the parameter $\lambda = \pm 1/2$) as solutions to the
Yamabe equation $Y_{X(p,q-1)} f = 0.$
\begin{remark} There are some delicate points to this phenomenon, that the minimal representation
$(\pi^{p,q}, V^{p,q})$ can be realized on the hyperboloid $X(p,q-1)$; note that in a similar way
we may realize the minimal representation $(\pi^{p+1,q-1}, V^{p+1,q-1})$ on the same
hyperboloid $X(p,q-1)$. Indeed we have \cite{Koba3}, Theorem 7.2.2, a non-split exact sequence
of Harish-Chandra modules for $O(p,q-1)$
$$0 \to (\pi^{p,q})_K \to ({\rm Ker}Y_{X(p,q-1)})_K \to (\pi^{p+1,q-1})_K \to 0,$$
which in turn is based on the non-split exact sequences
$$0 \to (\pi^{p,q-1}_{+,-\frac{1}{2}})_K \to C^{\infty}_{\frac{1}{2}, \delta}(X(p,q-1))_K
\to (\pi^{p,q-1}_{-,\frac{1}{2}})_K$$

$$0 \to (\pi^{p,q-1}_{+,\frac{1}{2}})_K \to C^{\infty}_{\frac{1}{2}, -\delta}(X(p,q-1))_K
\to (\pi^{p,q-1}_{-,-\frac{1}{2}})_K$$
where the eigenspaces $C^{\infty}_{\lambda, \epsilon}(X(p,q-1))$ are defined as
$$\{f \in C^{\infty}(X(p,q-1))| Y_{X(p,q-1)}f =
(-\lambda^2 + \frac{1}{4})f, \, f(-z) = \epsilon f(z) \}$$
and the sign character $\delta = (-1)^{(p-q)/2}.$
\end{remark}

As indicated above, there is a similar branching law for the discrete spectrum in restricting
from $G= O(p,q)$ to the other symmetric subgroups $G'$ without a compact factor; we shall use the
following notation for representations $\pi_{-,\lambda}^{p,q}$ of $G$ realized in function spaces
on another hyperboloid $O(p,q)/O(p,q-1)$, namely if we identify $O(p,q)$ with $O(q,p)$, then
$\pi_{-,\lambda}^{p,q}$ corresponds to $\pi_{+,\lambda}^{q,p}.$ We have the same parity conditions
as before on $\lambda$ relative to $p,q$, and we cite the result from \cite{Koba2}, Theorem 9.1
\begin{thm} The restriction of the minimal representation $(\pi^{p,q}, V^{p,q})$ of the
group $G = O(p,q)$ to the symmetric subgroup $G' = O(p',q') \times O(p'',q''), p =  p'+p'', q = q' + q'',$
contains as a discrete spectrum the direct Hilbert  space sum
$$\oplus_{\lambda > 1} \pi_{+, \lambda}^{p',q'} \otimes  \pi_{-, \lambda}^{p'',q''}
\oplus \oplus_{\lambda > 1} \pi_{-, \lambda}^{p',q'} \otimes  \pi_{+, \lambda}^{p'',q''}.$$
Here the sum is over those 
$\lambda > 1$ which satisfy the parity conditions relative to $p',q'$ etc. 
\end{thm}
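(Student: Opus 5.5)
The plan is to follow the proof of the previous Theorem (the case $p''=0$), now with the full product of two hyperboloids. By the Lemma with its proof, the map
$$((x',y'),(y'',x''))\mapsto \Big(\frac{(x',x'')}{|x|},\frac{(y',y'')}{|y|}\Big)$$
is a conformal embedding of $X(p',q')\times X(q'',p'')$ onto an open subset of $M\simeq S^{p-1}\times S^{q-1}$. This embedding is equivariant for $G'=O(p',q')\times O(p'',q'')$. The conformal factor is $|x|^{-1}$, and it is $G'$-invariant because $G'$ preserves the splitting.

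By conformal covariance of the Yamabe operator, a solution $F$ of $Y_M F=0$ pulls back, after multiplication by the power $|x|^{-(n-2)/2}$ of the factor (with $n=p+q-2$), to a solution on the product. The generalized form of the Lemma gives
$$Y_{X(p',q')\times X(q'',p'')} = Y_{X(p',q')} - Y_{X(q'',p'')},$$
so we may separate variables. Take $F=f_1\otimes f_2$ with $Y_{X(p',q')}f_1=(\tfrac14-\lambda^2)f_1$ and $Y_{X(q'',p'')}f_2=(\tfrac14-\mu^2)f_2$. The Yamabe equation then forces $\lambda^2=\mu^2$, so $\mu=\lambda$. Identifying $O(p'',q'')$ with $O(q'',p'')$, the second factor is realized by $\pi^{p'',q''}_{-,\lambda}$, in the notation just introduced.

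Next we use the Proposition on $\pi_{+,\lambda}$ with the convention $\pi^{p,q}_{-,\lambda}\leftrightarrow\pi^{q,p}_{+,\lambda}$. For each admissible $\lambda$ (parity $\lambda\in\Z+\frac{p'+q'}{2}$, and correspondingly for $(p'',q'')$, these being compatible because $p+q$ is even) this yields the irreducible Harish-Chandra modules $\pi^{p',q'}_{\pm,\lambda}$ of $K$-finite eigenfunctions. When $\lambda>0$ they are discrete series of the hyperboloids, so they lie in $L^2$. Their tensor products therefore give closed $G'$-invariant subspaces of eigenfunctions on the product.

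There are two families, $\pi_+\otimes\pi_-$ and $\pi_-\otimes\pi_+$. They come from the two choices of which hyperboloid carries the "$+$" type $K$-types ($m-n\ge b$ versus $m-n\le b^-$). The step is to check that the pulled-back functions, extended from the open image to all of $M$, are genuinely in $\operatorname{Ker} Y_M$ with finite $V^{p,q}$-norm. To do this I would compare $K'$-types, where $K'=O(p')\times O(q')\times O(p'')\times O(q'')$, with the $K$-types $\mathcal H^a(\R^p)\otimes\mathcal H^b(\R^q)$, $a+\frac p2=b+\frac q2$, of the Theorem on $\pi^{p,q}$. Branching $\mathcal H^a(\R^{p'+p''})$ to $O(p')\times O(p'')$ produces the pairs $(m',m'')$ occurring in $\pi_\pm$.

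The main obstacle is proving that these separated solutions actually belong to $V^{p,q}$ and form its \emph{full} discrete spectrum for $\lambda>1$. This requires control of the boundary behaviour of $f_1\otimes f_2$ near the complement of the open image, which is where $|x'|=|y'|$. For this I would use the asymptotics of the associated Legendre-type functions in the hyperbolic radial variable. Decay of order $\cosh^{-\lambda}$ on each factor, combined with the conformal factor, gives square integrability against the invariant norm precisely when $\lambda>1$, which explains the threshold. The restriction to $\lambda>1$, rather than $\lambda>0$, reflects exactly this borderline integrability, and I would expect only that inequality, not an exhaustion statement, as for the discrete part; the continuous spectrum is not claimed.

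Unitarity of the embedding, up to a scalar on each summand, then follows from Schur's lemma, since each summand is irreducible and occurs with multiplicity one by the $K'$-type count.
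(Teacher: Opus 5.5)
Your skeleton is the paper's proof. You transfer $\ker Y_M$ to the product of hyperboloids by the conformal embedding, separate variables to get $\lambda_+^2=\lambda_-^2$, and treat membership in $V^{p,q}$ with the threshold $\lambda>1$ as the analytic core. The paper does not prove that core itself; it takes it from \cite{Koba2}.

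Two things you add to this skeleton are wrong, and the first comes with a smaller slip. The first error is that the factor $|x|^{-1}$ is not $G'$-invariant: a boost in $O(p',q')$ mixing $x'_1$ and $y'_1$ changes $|x'|$ and hence $|x|$. Write $\Phi$ for the embedding, $\Omega=|x|^{-1}$, and $\Omega_M(h,\cdot)$ for the conformal factor of $h$ on $M$. The transfer $F\mapsto \Omega^{(n-2)/2}\,\Phi^*F$ is still $G'$-equivariant, but for the opposite reason. One has $\Omega(hz)=\Omega_M(h,\Phi z)\,\Omega(z)$, and this cocycle is exactly what absorbs the factor $\Omega_M(h,\cdot)^{(n-2)/2}$ in $\pi^{p,q}$. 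The smaller slip is that one embedding only sees the open $G'$-orbit $\{|x'|>|y'|\}$ of $M$. The second family $\pi_-\otimes\pi_+$ lives on the other open orbit $\{|x'|<|y'|\}$, the image of $X(q',p')\times X(p'',q'')$; it does not come from a different choice of $K$-types.

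The second error is your explanation of $\lambda>1$, which does not work. In the radial variable $t$, the $K$-finite functions of $\pi^{p',q'}_{+,\lambda}$ decay like $(\cosh t)^{-\lambda-\rho'}$ with $\rho'=\frac{p'+q'-2}{2}$. They do not decay like $(\cosh t)^{-\lambda}$, which would not even be square integrable unless $\lambda>\rho'$. With the correct rate, a decay-versus-volume count yields no threshold at all. Since $\Phi^*dvol_M=|x|^{-n}dvol_X$ and $|x|\ge 1$ on the product, you get $\int_{\Phi(X)}|F|^2\,dvol_M=\int_X|x|^{-2}|f_1|^2|f_2|^2\,dvol_X\le\|f_1\|^2\|f_2\|^2$ for every $\lambda>0$.

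What must actually be shown has two parts. First, $F$, continued across the hypersurface $|x'|=|y'|$ (which is null for the metric of $M$), must be a distributional solution of $Y_MF=0$ on all of $M$. Second, its norm $\sum(a+\frac{p-2}{2})\|F_{a,b}\|^2_{L^2(M)}$ must be finite; this is an $H^{1/2}$-type norm, not an $L^2$ norm. That is precisely the step the paper delegates to \cite{Koba2}. Your Schur argument only becomes available after it, once a bounded intertwiner exists. The theorem also claims containment only, so exhausting the discrete spectrum is not part of what you need.
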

\begin{proof} Same as before, the Yamabe equation is now separated in the variables corresponding
to the two hyperboloids, so for $\pi_{+, \lambda_+}^{p',q'} \otimes  \pi_{-, \lambda_-}^{p'',q''}$
to satisfy the equation,
we must have $\frac{1}{4} - \lambda_+^2 + \lambda_-^2 - \frac{1}{4} = 0$ which means
$\lambda_+ = \lambda_-.$ The condition $\lambda > 1$ is part of the details in \cite{Koba2}, to see that this
spectrum is contained in the Hilbert space $V^{p,q}.$
\end{proof}

\section{The flat model of the minimal representation of $O(p,q)$}
For the cone in three dimensions: $x^2 + y^2 - z^2 = 0$ (a special case of our $\Xi$)
we know that intersecting with a plane we obtain the conic sections: ellipse, hyperbola, or
parabola, depending on the angle of the plane. This is what we saw for our $\Xi$, namely
the elliptic model, and the hyperbolic model (or some mixture) realizing the minimal
representation. In some sense there is missing the parabolic model; and this is exactly
the model living on the nilpotent radical $N \simeq \R^{p-1, q-1}$ for our maximal real parabolic
subgroup. This will also illustrate further aspects of the representation, in particular the
extension of the discussion of the wave equation, namely the ultrahyperbolic equation.
This will include realizing the Hilbert space as $L^2(C)$, where $C$ arises as the intersection
of the minimal nilpotent orbit in the Lie algebra of $O(p,q)$ with the Lie algebra of $N.$ We shall
also see the role of the Green's function, some Knapp-Stein intertwining operators, and
the fundamental homogeneous distributions on $\R^{p-1, q-1}.$ We fix $p, q \geq 2, p+q > 4$
even.

The Yamabe operator on $\R^{p-1, q-1}$ is in usual coordinates
$$\square = \square_{p-1, q-1} = \frac{\partial^2}{\partial x_1^2} + 
\dots +\frac{\partial^2}{\partial x_{p-1}^2}
-\frac{\partial^2}{\partial x_p^2} 
- \dots -\frac{\partial^2}{\partial x_{p+q-2}^2}$$
and the model we shall now construct lives in the solution space to $\square f = 0.$
Note that for $p = 2$ (or $q = 2$) and restricting to the identity component $SO_0(p,2)$
we are dealing with a unitary highest weight representation, a well-known case, and for
$p = 4$ with the additional physical interpretation of the $K$-types as energy levels of the bound states of
the Hydrogen atom. Note that as in the previous models, the conformal invariance allows us to
study the representation now in these flat coordinates, see \cite{Koba3}. The key object is the
Green's function for $\square$; this is a Schwartz distribution $E_0$ such that by convolution
$$S: C_0^{\infty}(\R^n) \to C^{\infty} (\R^n), \, \varphi \mapsto E_0 \star \varphi$$
has image in the kernel of $\square$, $n = p+q-2.$ We define a Hermitian form on the
image of $S$ by
$$(f_1, f_2)_N = \int_{\R^n} \int_{\R^n} E_0(y-x) \varphi_1(x) \overline{\varphi _2(y)} dx dy$$
where $f_1 = S \varphi_1, f_2 = S \varphi_2.$ This turns out to be well-defined and
positive-definite. The main point is the following, see \cite{Koba3} Theorem 1.4.
\begin{thm}The Hilbert space completion of the image of $S$ with respect to this
Hermitian form carries an irreducible unitary representation of $O(p,q)$ equivalent to
the minimal representation constructed earlier.
\end{thm}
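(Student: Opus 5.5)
My plan is to reduce the statement to the elliptic model of Theorem on $V^{p,q}$ by passing from $\R^{p-1,q-1}$ to $M = S^{p-1}\times S^{q-1}$ through a conformal embedding. The unitarity will come from the Fourier transform, and the identification with the earlier model will come from the conformal covariance of the Yamabe operator.

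First, I realize $N \simeq \R^{n}$, $n=p+q-2$, as an open dense chart of $M$. I use the "parabolic" section of the cone $\Xi$: the map $z \mapsto (1-Q(z), 2z, 1+Q(z))$ into $\Xi$, with $Q$ the indefinite quadratic form of signature $(p-1,q-1)$, followed by the normalization $\Xi \to M$. This map is conformal, with an explicit conformal factor $\Omega(z)$, just as in the proof of the Lemma on hyperboloids. Consequently $Y_M$ pulls back to $\square$ up to the weights $\Omega^{-(n+2)/2}$ and $\Omega^{(n-2)/2}$, and $f \mapsto \Omega^{(n-2)/2} f$ carries $\ker Y_M$ (in the $\pi_{(n-2)/2}$ picture) into $\ker\square$. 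It intertwines the $O(p,q)$-actions, where $O(p,q)$ acts on functions on $\R^n$ by the same formula $\pi_{(n-2)/2}$, and $N$ acts simply by translations.

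Second, I analyze the Hermitian form on the Fourier side. The Green's function $E_0$ is a homogeneous distribution of degree $2-n$. Its Fourier transform is, up to a constant and a suitable choice of $E_0$, the measure $\delta(Q^*(\xi))$ on the cone $C=\{\xi\neq 0 : Q^*(\xi)=0\}$. For $p+q$ even this is the ``Riesz distribution'' at the relevant residue; I would choose $E_0$ as the appropriate linear combination of $(Q\pm i0)^{(2-n)/2}$. Writing $f = E_0\star\varphi$, we get $\hat f = \hat\varphi|_C\, d\mu_C$ and
$$(f_1,f_2)_N = c\int_C \hat\varphi_1(\xi)\overline{\hat\varphi_2(\xi)}\,d\mu_C(\xi).$$
This gives well-definedness and positive semi-definiteness, with kernel exactly those $\varphi$ with $\hat\varphi|_C=0$, i.e. those with $S\varphi=0$. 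Hence the completion is $L^2(C,d\mu_C)$, and the $N$-translations act unitarily.

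Third, I show invariance under the rest of $O(p,q)$ and the identification. The Levi factor $MA \simeq O(p-1,q-1)\times\R^+$ preserves $d\mu_C$ up to the correct homogeneity, so it acts unitarily on $L^2(C)$. It then suffices to check one Weyl group element, the conformal inversion $z\mapsto z/Q(z)$. I would check its unitarity by comparing with the elliptic norm. Namely, I transport $S\varphi$ to $M$ via the first step. There the norm $\sum_{a,b}(a+\frac{p-2}{2})\|F_{a,b}\|^2_{L^2(M)}$ is $O(p,q)$-invariant, and I show it coincides with $(\cdot,\cdot)_N$ up to a constant. For this I would use that both forms are $P$-invariant and both are given by pairing against a $P$-covariant kernel. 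On $M$ that kernel is the Knapp--Stein kernel (the Green's function of $Y_M$ restricted to the cone, with spectrum proportional to $a+\frac{p-2}{2}$). Pulled back by the conformal chart, it becomes $E_0(y-x)$ times conformal weights, which cancel against $\Omega^{(n-2)/2}$. Irreducibility and equivalence with $\pi^{p,q}$ then follow from the Theorem on $V^{p,q}$.

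The main obstacle is this last comparison: showing that the Green's function of $Y_M$, equivalently the $K$-diagonal operator with eigenvalues $a+\frac{p-2}{2}$, pulls back exactly to $E_0$. Both normalizations and parity subtleties for $p+q$ even enter here, since $E_0$ must be the right combination so that the Fourier transform is supported on $C$. I would first try to settle this via the spectrum generating method of \cite{bran_ola} together with the uniqueness of $P$-covariant homogeneous distributions.
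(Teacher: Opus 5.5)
Your ingredients are the paper's. The parabolic section of $\Xi$ gives the conformal chart. The identity $\mathcal{F}E_0=\delta(Q)$ turns $(\cdot,\cdot)_N$ into the $L^2(C,d\mu)$ inner product, which gives well-definedness and positivity exactly as you say. Convolution by $E_0$ is then identified with a Knapp--Stein operator, which the paper takes from \cite{Koba3}.

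The gap is in that last identification, which you flag yourself, and the way you propose to close it fails as stated.
\begin{itemize}
\item The uniqueness you invoke is false. The $O(p-1,q-1)$-invariant distributions on $\R^n$ that are homogeneous of degree $2-n$ form a two-dimensional space, spanned by $(Q\pm i0)^{1-n/2}$, i.e. by $E$ and $\overline{E}$. On $\R^{2,2}$, for instance, these are $\delta(Q)$ and $\mathrm{p.v.}\,Q^{-1}$.
\item The real combination $E+\overline{E}$ is a fundamental solution. It satisfies every $N$- and $MA$-covariance requirement you list. Yet convolution with it does not land in $\ker\square$, and its Fourier transform is not supported on $C$.
\item What singles out $E_0$ is the extra condition $\square E_0=0$, i.e. that the kernel comes from an intertwiner whose image lies in the solution space. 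With that condition added, the space becomes one-dimensional and your comparison can be made to work.
\item Your spectrum is inverted. Since $\langle T\varphi,\varphi\rangle=\sum_{a,b} t_{a,b}^{-1}\|T\varphi_{a,b}\|^2_{L^2(M)}$, reproducing the norm $(a+\frac{p-2}{2})\|F_{a,b}\|^2_{L^2(M)}$ requires the kernel to act by $c\,(a+\frac{p-2}{2})^{-1}$ on the $K$-types of $V^{p,q}$, not by $a+\frac{p-2}{2}$. Compare the Lorentzian case, where $\delta(Q)$ produces the measure $d\xi/2|\xi|$ on each sheet of the cone.
\end{itemize}

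Once the identification is in hand, your reduction to $N$, $MA$ and one Weyl element is superfluous. Your description of the elliptic norm as pairing preimages against a kernel already presupposes that $V^{p,q}$ is the image of an intertwiner $T:\pi_{(n+2)/2}\to\pi_{(n-2)/2}$.

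If $S$ is the flat picture of $T$, then $(S\varphi_1,S\varphi_2)_N=\int_{\R^n}S\varphi_1\,\overline{\varphi_2}\,dx$ pairs a function of conformal weight $\frac{n-2}{2}$ with one of weight $\frac{n+2}{2}$. The weights add up to $n$ and $L_h^*dx=\Omega^n dx$, so this pairing is $G$-invariant. Invariance of $(\cdot,\cdot)_N$ therefore follows at once from $S\,\pi_{(n+2)/2}(h)=\pi_{(n-2)/2}(h)\,S$, with no separate check of the inversion.

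Positivity comes from $\delta(Q)\ge 0$. Irreducibility and equivalence follow because the image is the minimal representation, whose invariant Hermitian form is unique up to a scalar. This is the route the paper indicates.
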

The proof uses in essential ways the explicit form of $E_0$; namely let
$$P(x) = x_1^2 + \dots + x_{p-1}^2 - x_p^2 - \dots  - x_n^2$$
the standard quadratic form in $\R^n$ (which of course also defines the metric). We
define the distribution $(P(x) + i0)^{\lambda}$ as the limit of $(P(x) +iR(x))^{\lambda}$
as a positive definite quadratic form $R(x)$ tends to $0.$ Let
$$E = C(n)e^{i\pi(q-1)/2}(P(x) + i0)^{1 - \frac{n}{2}}$$
with $C(n)$ an explicit real constant.
\begin{prop}$E$ is a fundamental solution of $\square$, namely $\square E = \delta$,
the Dirac delta function, and the Green's function $E_0 = (-E + \overline{E})/2\pi i$
satisfies $\square E_ 0 = 0$, and its Fourier transform is given by
$${\mathcal F}E_0 = \delta(Q).$$
Here $Q(\zeta)$ is the same quadratic form as $P(x)$ in the dual space, and
$\delta (Q)$ can be identified with the invariant measure $d\mu$ supported on
the cone $C = \{ \zeta \in \R^n | Q(\zeta) = 0 \}.$
\end{prop}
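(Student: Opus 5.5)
The plan is to work entirely on the Fourier side, using the classical Gelfand--Shilov theory of the homogeneous distributions $(P(x)\pm i0)^{\lambda}$ attached to a nondegenerate quadratic form of signature $(p-1,q-1)$. The key input is the Fourier transform formula for these distributions. For $P$ of signature $(p-1,q-1)$ in $\R^n$ and generic $\lambda$ it reads
$$\mathcal{F}\big((P+i0)^{\lambda}\big) = c(\lambda,n)\, e^{-i\pi(q-1)/2}\,(Q-i0)^{-\lambda-n/2},$$
where $c(\lambda,n)$ is an explicit ratio of $\Gamma$-factors and powers of $2$ and $\pi$. This is obtained first for $(P+iR)^\lambda$ with $R$ positive definite: one diagonalizes simultaneously, computes Gaussian-type integrals, and uses analytic continuation in $\lambda$ and in the form. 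The limit $R\to 0$ is then taken in $\mathcal{S}'$. The phase $e^{i\pi(q-1)/2}$ in $E$ is chosen exactly to cancel this factor; this is the signature-dependent part of the formula.

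With this, $\square E=\delta$ is proved by specializing to $\lambda = 1-\frac n2$. Then $\mathcal F E$ becomes a multiple of $(Q-i0)^{-1}$, and $\square$ becomes multiplication by $-Q$ (up to the $2\pi$ normalization of $\mathcal F$). One needs $Q\cdot(Q-i0)^{-1}=1$, which holds because $(Q-i0)^{-1}$ is the limit of $(Q-i\epsilon R)^{-1}$ and $Q(Q-i\epsilon R)^{-1}\to 1$ in $\mathcal S'$. The constant $C(n)$ is then fixed by requiring the product to be exactly $\mathcal F\delta=1$. Equivalently, one can avoid the Fourier transform and compute $\square (P+iR)^{\lambda}$ directly via Euler homogeneity, getting $4\lambda(\lambda+\frac n2-1)(P+iR)^{\lambda-1}$ plus terms vanishing as $R\to0$. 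At $\lambda=1-\frac n2$ this prefactor vanishes, which leaves the residue of $(P+i0)^{\lambda-1}$ at $\lambda-1=-\frac n2$ as the delta contribution. Since the paper only asks for the plan, I would use the Fourier route.

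For $E_0$, note that $P$ is real, so $\overline{E}=C(n)e^{-i\pi(q-1)/2}(P-i0)^{1-\frac n2}$, and $\overline{E}$ is also a fundamental solution because $\square$ has real coefficients. Hence $\square E_0=(-\delta+\delta)/2\pi i=0$. On the Fourier side,
$$\mathcal F E_0 = \frac{c'}{2\pi i}\big((Q+i0)^{-1}-(Q-i0)^{-1}\big).$$
The conjugation flips the phase and the sign of $i0$, and the Gelfand--Shilov phases cancel against the chosen prefactors. By the Sokhotski--Plemelj formula composed with the submersion $Q$ away from the origin, $(Q-i0)^{-1}-(Q+i0)^{-1}=2\pi i\,\delta(Q)$. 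So $\mathcal F E_0$ is $\delta(Q)$ up to a sign and constant, and the normalization $C(n)$ makes it exactly $\delta(Q)$.

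Finally, identify $\delta(Q)$ with a measure on $C$. For a test function $\psi$, set $\langle\delta(Q),\psi\rangle=\int_C \psi\, d\mu$ with $d\mu = d\sigma/|\nabla Q|$, the Gelfand--Leray form. It is $O(p-1,q-1)$-invariant, since $Q$ is, and homogeneous of degree $n-2$. Local integrability at the vertex holds because $n-2>0$ and $C\setminus\{0\}$ has dimension $n-1$. The hypothesis $p+q>4$ is needed to ensure that the pullback $\delta(Q)$ extends across $0$ uniquely, with no extra $\delta_0$-type term.

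The main obstacle is the bookkeeping of phases and constants in the Gelfand--Shilov formula at the special parameter $\lambda=1-\frac n2$. When $n$ is even, $(P\pm i0)^{\lambda}$ and its Fourier transform have poles at certain integer values of the parameter. So one must check that $\lambda=1-\frac n2$ and its image $-1$ are regular points: for the $\pm i0$ regularizations this holds, since only the real combinations $P_\pm^\lambda$ have poles at negative integers. If a pole did occur, I would instead compute $\mathcal F E_0$ as a limit in $\lambda$, dividing by the vanishing $\Gamma$-factor.
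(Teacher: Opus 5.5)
Your proposal is correct. It is the standard Gelfand--Shilov argument (Fourier transform of $(P+i0)^{\lambda}$ at $\lambda=1-\frac n2$, cancellation of the phase $e^{-i\pi(q-1)/2}$, then the jump relation $(Q-i0)^{-1}-(Q+i0)^{-1}=2\pi i\,\delta(Q)$) that underlies \cite{Koba3}; the paper gives no proof of its own and cites that reference instead.

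Two small corrections. First, $C(n)=-\Gamma(\frac n2-1)/(4\pi^{n/2})$ is already fixed by $\square E=\delta$. With that value, $\mathcal{F}E=-(Q-i0)^{-1}$ and $\mathcal{F}\overline{E}=-(Q+i0)^{-1}$, so $\mathcal{F}E_0=\delta(Q)$ follows exactly for the unnormalized $\mathcal{F}$, and no sign or constant is left to adjust. Second, since $n=p+q-2$ is even here, $(P\pm i0)^{\lambda}$ \emph{do} have poles at the negative integers $\lambda=-\frac n2-k$; what makes $1-\frac n2$ and $-1$ regular points is that $n>2$.
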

This is all quite natural in view of the principle of conformal change in the Yamabe
equation; the parabolic section of the cone $\Xi$ is given by the coordinates $(z', z'') \in
\R^{p-1,q-1} = \R^n$ and the embedding
$$(z', z'') \mapsto (1 - \frac{|z'|^2 - |z''|^2}{4}, z', z'', 1 + \frac{|z'|^2 - |z''|^2}{4}$$
is into $\Xi$, and projects further along rays to a conformal embedding into $M
\simeq S^{p-1} \times S^{q-1}$ (analogous
to what we saw for the product of hyperboloids). Hence the conformal group $O(p,q)$ acts
with singularities (by fractional linear transformations) on the solution space to $\square \varphi = 0$,
and the Fourier transform of solutions are supported on the cone $C$. The convolution operator $S$
with image in this solution space may be identified with a Knapp-Stein $G$-intertwining operator beween
representations induced from the maximal parabolic $P \simeq MAN$ with nilradical $N$ from before, 
$M \simeq O(p-1,q-1)$ and $A$ the dilations with a positive real number. The final insight is to see the Hilbert
space as $(L^2(C), d\mu)$ via ${\mathcal F}(E_0 \star \varphi) = {\mathcal F}(E_0) {\mathcal F}(\varphi).$ We introduce the natural linear map $T: L^2(C) \to {\mathcal S}'(\R^n),
\psi \mapsto \psi d\mu$, to tempered distributions on $\R^n$, viewing the image as a Hilbert
space and $T$ as a unitary operator.
The conclusion is, see \cite{Koba3}, Theorem 4.9.
\begin{thm}${\mathcal F}S(C_0^{\infty}(\R^n)$ is contained in $T(L^2(C))$ with dense image,
and there is a unique (up to a scalar constant) unitary map between $L^2(C)$ and the completion of 
$S(C_0^{\infty}(\R^n).$ This induces a unitary equivalence between unitary irreducible representations $L^2(C)$
and $V^{p,q}$ of $O(p,q).$
\end{thm}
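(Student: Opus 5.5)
The plan is to work entirely on the Fourier side, using the Proposition that ${\mathcal F}E_0 = \delta(Q) = d\mu$. For $\varphi \in C_0^{\infty}(\R^n)$ the Fourier transform $\hat\varphi$ is Schwartz, so its restriction $\hat\varphi|_C$ is a smooth function on the cone. We have ${\mathcal F}(S\varphi) = {\mathcal F}(E_0)\,\hat\varphi = \hat\varphi|_C \, d\mu = T(\hat\varphi|_C)$. This gives the containment ${\mathcal F}S(C_0^{\infty}(\R^n)) \subset T(L^2(C))$, provided $\hat\varphi|_C \in L^2(C, d\mu)$.

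For that integrability, note that $d\mu$ is homogeneous of degree $n-2$ under dilations. Since $n = p+q-2 > 2$, the measure is locally finite near the vertex, and the Schwartz decay handles infinity. Parseval then turns the Hermitian form into
$$(S\varphi_1, S\varphi_2)_N = \int_C \hat\varphi_1 \overline{\hat\varphi_2}\, d\mu$$
up to a normalising constant from the Fourier convention. This shows at once that the form is well defined and positive semidefinite, and that $S\varphi \mapsto \hat\varphi|_C$ is an isometry into $L^2(C)$. It is injective on the image of $S$, because $S\varphi = 0$ forces $\hat\varphi|_C\,d\mu = 0$. Hence the form is positive definite, and the map extends to an isometry from the completion of $S(C_0^{\infty}(\R^n))$ into $L^2(C)$.

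Density comes next. Suppose $\psi \in L^2(C)$ is orthogonal to every $\hat\varphi|_C$. Then the tempered distribution $\overline\psi\, d\mu$ annihilates ${\mathcal F}(C_0^\infty)$, which is dense in ${\mathcal S}$. So its inverse Fourier transform, a distribution, vanishes on $C_0^\infty$, and hence $\psi\,d\mu = 0$, that is $\psi = 0$ $\mu$-a.e. Thus the isometry is unitary onto $L^2(C)$.

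For the group action, I transport $\pi^{p,q}$ restricted to the image of $S$ via this unitary. I take the action on solutions of $\square f = 0$ in the flat chart to be the conformal action $\pi_{(n-2)/2}$ pulled back from $M$ through the parabolic embedding. I also use that $S$ is a Knapp--Stein intertwiner, which makes the Hermitian form $G$-invariant. This is the main obstacle: the action is only by fractional linear maps with singularities. Invariance is immediate for $P = MAN$, where translations act on $L^2(C)$ by characters $e^{i\langle x,\zeta\rangle}$, $M$ acts by the linear action of $O(p-1,q-1)$ on $C$ (which preserves $\mu$), and $A$ acts by dilation with the weight matched to the degree $n-2$ of $\mu$. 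For the full group I would add one inversion element $w$, reducing to the statement that the pairing with $E_0$ is $w$-invariant. That statement is the intertwining identity for the Knapp--Stein operator, which I would take from \cite{Koba3}.

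Once the form is $G$-invariant, the completion carries a unitary representation. The Theorem giving the flat-model construction identifies it with $V^{p,q}$, and it is irreducible. Uniqueness up to scalar of the unitary map then follows from Schur's lemma applied to the irreducible representation on $L^2(C)$.
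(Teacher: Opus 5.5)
Your proposal is correct and follows the route the paper sketches before deferring to \cite{Koba3}, Theorem 4.9: the convolution identity ${\mathcal F}(E_0\star\varphi)={\mathcal F}(E_0)\,{\mathcal F}(\varphi)=\hat\varphi|_C\,d\mu$, then Parseval, local finiteness of $d\mu$ for $n>2$, and a duality argument for density, with the $O(p,q)$-structure transported from the preceding flat-model theorem. Two small points: positive-definiteness needs the implication ``$\int_C|\hat\varphi|^2\,d\mu=0$ implies $S\varphi=0$'', which your identity ${\mathcal F}(S\varphi)=T(\hat\varphi|_C)$ gives at once. Also, your separate treatment of $G$-invariance of the form via $P$ and an inversion $w$ is unnecessary, because that invariance is already part of the preceding theorem that you invoke.
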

\begin{remark} This model means, using Mackey theory for semidirect product groups, that the
restriction of $\pi^{p,q}$ to $P$ is irreducible (strictly speaking to the opposite group $\overline{P}$).
\end{remark}
Now the natural question becomes to understand the $(\mathfrak g, K)$-module as it is
realized on the cone $C$, in particular what for example are the $K$-types as functions
of $\zeta \in C \subset \R^n.$ Let again $p+q \in 2\Z, p+q>4, p\geq q \geq 2$ and
consider the modified Bessel functions $K_{\nu}(z)$ of the second kind. Then with $|\zeta|$
the Euclidian norm
\begin{thm}$v_0 = |\zeta|^{(3-q)/2} K_ {(q-3)/2} (2|\zeta|)$ is a $K$-finite vector in $L^2(C)$,
in fact the lowest $K$-type, generating under the Fourier transform of the action in $\R^{p-1,q-1}$
of the universal enveloping algebra of $\mathfrak g$ the Harish-Chandra module of the minimal
representation of $O(p,q).$ Hence its completion is isomorphic to $(\pi^{p,q}, V^{p,q}).$
\end{thm}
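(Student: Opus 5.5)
The plan is to write down the action of $K=O(p)\times O(q)$ on $L^2(C)$ in the flat picture, check that $v_0$ is invariant under it and of finite norm, and then invoke the irreducibility coming from the previous Theorems (Theorem 1.4 and Theorem 4.9 of \cite{Koba3}). In the flat model on $N\simeq\R^{p-1,q-1}$ the Lie algebra $\mathfrak g=\overline{\mathfrak n}+\mathfrak m+\mathfrak a+\mathfrak n$ acts by differential operators. On the Fourier side these become:
\begin{itemize}
\item $\mathfrak n$ acts by multiplication by $i\zeta_j$;
\item $\mathfrak m=o(p-1,q-1)$ acts by the vector fields of the linear action;
\item $\mathfrak a$ acts by the Euler operator shifted by a constant;
\item $\overline{\mathfrak n}$ acts by second order operators $P_j$. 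These are the Fourier transforms of the conformal inversion vector fields, namely multiplication by the quadratic expressions $x_jP(x)-2x_j(x\cdot\partial)$-type terms.
\end{itemize}
The operators $P_j$ are tangential to the cone $C$ and, after Fourier transform, reduce to Bessel-type operators in the radial variables.

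The Lie algebra $\mathfrak k=o(p)+o(q)$ is spanned by $\mathfrak k\cap\mathfrak m=o(p-1)+o(q-1)$ together with the elements $X_j-\theta X_j$ for $X_j\in\mathfrak n$. So $K$-invariance of $v_0$ amounts to two checks.

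First, $v_0$ must be annihilated by $o(p-1)\times o(q-1)$. This is immediate, since on $C$ we have $|\zeta'|=|\zeta''|$ and $v_0$ depends only on $|\zeta|$, i.e. only on $|\zeta'|=|\zeta''|=|\zeta|/\sqrt2$ (up to normalization).

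Second, $v_0$ must satisfy $(i\zeta_j-P_j)v_0=0$ for each $j$ (with signs adapted to the $p-1$ versus $q-1$ block). Write $P_j$ in the coordinates $\zeta=(r\omega',r\omega'')$ on $C$ with $\omega'\in S^{p-2}$, $\omega''\in S^{q-2}$. Applied to a radial function $f(r)$, the operator $P_j$ gives $\omega_j$ times an ordinary differential operator in $r$, namely of the form $r f''+c\,f'$ with $c$ depending on $p,q$. The condition then becomes an ODE whose solutions are $r^{\alpha}Z_\nu(2r)$, with $\nu=(q-3)/2$ and $Z_\nu$ a Bessel function. Its $L^2(C,d\mu)$ requirement near $r=\infty$ (with $d\mu\sim r^{n-3}dr\,d\omega$) forces the exponentially decaying solution $K_\nu$. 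Near $r=0$, $K_\nu$ behaves like $r^{-|\nu|}$ (or $\log$), so integrability there has to be checked separately; it is fine using $p\ge q$ and $p+q>4$.

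I expect the main obstacle to be the careful identification of $P_j$ restricted to $C$ and its constants, since the signature and the $\rho$-shift fix the Bessel index exactly. I would first verify this in the case $q=2$, where $v_0\propto e^{-2|\zeta|}$ recovers the known highest weight ground state, and then match the general constants against the radial ODE.

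Once $v_0$ is known to be a nonzero $K$-fixed vector in $L^2(C)$, it is $K$-finite. Since by the previous Theorem $L^2(C)$ is irreducible and unitarily equivalent to $V^{p,q}$, the span $U(\mathfrak g)v_0$ is dense and equals the Harish-Chandra module. It remains to see that $v_0$ spans the lowest $K$-type. By the Theorem on $V^{p,q}$, the $K$-types are ${\mathcal H}^a\otimes{\mathcal H}^b$ with $a+\frac p2=b+\frac q2$. With $p\ge q$ the minimum is $b=0$, $a=(q-p)/2$, which is nonnegative only if $p=q$. In general the lowest type is ${\mathcal H}^{(p-q)/2}(\R^p)\otimes{\mathcal H}^0(\R^q)$, which is $O(q)$-trivial but not $K$-trivial. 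So I would refine the argument: $v_0$ need only be annihilated by $o(q)$ and generate the minimal $o(p)$-type, and I would check the $o(q)$-part via the $\zeta''$-components of the equations above.
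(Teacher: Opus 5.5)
Your argument has a genuine gap when $p>q$, in two linked places.

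\textbf{First gap: $v_0$ is not $K$-fixed.} Your main line tries to show this by imposing the equations from $X_j-\theta X_j$ for \emph{every} $j$. But the radial reduction of the Bessel operators differs between the two blocks. Up to a constant factor, $P_j=\epsilon_j\zeta_j\square-(2E+n-2)\partial_j$ with $E=\sum_k\zeta_k\partial_k$; the constant $n-2$ is forced by tangency to $C$. Take an $O(p-1)\times O(q-1)$-invariant function $g(s)$ on $C$, with $s=|\zeta'|=|\zeta''|$. Independently of how $g$ is extended off $C$, one finds
$$P_jg=-\zeta_j\left(g''+\frac{q-2}{s}g'\right)\ \ (j\le p-1),\qquad P_jg=-\zeta_j\left(g''+\frac{p-2}{s}g'\right)\ \ (j\ge p).$$
So your constant $c$ is $q-2$ on the $\zeta'$-block and $p-2$ on the $\zeta''$-block. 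The $\zeta'$-equations give a modified Bessel equation of index $(q-3)/2$, whose $L^2$ solution is $v_0$. The $\zeta''$-equations give index $(p-3)/2$. Both can hold only if $p=q$, so in general $v_0$ is not $K$-fixed.

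\textbf{Second gap: your repair uses the wrong factor.} From $a+\frac p2=b+\frac q2$ you get $b-a=\frac{p-q}{2}\ge 0$. So the lowest $K$-type is $\mathcal H^0(\R^p)\otimes\mathcal H^{(p-q)/2}(\R^q)$, which is $O(p)$-trivial. The type $\mathcal H^{(p-q)/2}(\R^p)\otimes\mathcal H^0(\R^q)$ that you name violates the constraint unless $p=q$. Checking $o(q)$-annihilation through the $\zeta''$-components would therefore fail for $v_0$. Your own $q=2$ test already shows this: $e^{-2|\zeta|}$ is $O(p)$-invariant but lies in $\mathcal H^0(\R^p)\otimes\mathcal H^{(p-2)/2}(\R^2)$, where $SO(2)$ acts with weights $\pm\frac{p-2}{2}$ (opposite signs on the two nappes of $C$).

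\textbf{The fix.} Impose only the $o(p)$-conditions: $o(p-1)$-invariance, which is automatic for radial functions, plus the $\zeta'$-block equations for $j\le p-1$. These reduce to $g''+\frac{q-2}{s}g'=\kappa^2 g$, with $\kappa$ fixed by the normalizations. Decay at infinity excludes the $I_\nu$-solution, and integrability at $0$ uses $p\ge q$, as you noted.

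You then need no computation of the $o(q)$-action at all. The $K$-types of $V^{p,q}$ are multiplicity free, and $\mathcal H^a(\R^p)$ has no nonzero $SO(p)$-fixed vectors for $a>0$. Hence the $SO(p)$-fixed vectors in $L^2(C)\simeq V^{p,q}$ form exactly the finite-dimensional $K$-type $\mathcal H^0(\R^p)\otimes\mathcal H^{(p-q)/2}(\R^q)$. So $v_0$ is $K$-finite and lies in the lowest $K$-type. Irreducibility (Theorem 4.9 of \cite{Koba3}) then gives $U(\mathfrak g)v_0=(\pi^{p,q})_K$, as you said. To pass from $d\pi(X)v_0=0$ in the distribution sense to $SO(p)$-invariance, use that the generators are skew-adjoint, so $v_0$ lies in their domain with zero image.
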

The action of $\mathfrak g$ is by second-order differential operators on our distributions $\psi d\mu$
in analogy with the metaplectic representation of the symplectic Lie algebra. We finally mention
that there is yet another version of the Hermitian form on the space of solution $\square \varphi = 0$,
involving integration over a hyperplane, in analogy with the use of Cauchy data in the case of the
wave equation, see \cite{Koba3} Theorem 6.2.

\begin{remark} Recall the GJMS operators, which in flat space are $\square^k$, and our hyperboloid
$X(p,q-1)$, also locally conformally flat, and the minimal representation $\pi^{p,q}$ living in the space of solutions to the Yamabe equation $Y_{X(p,q-1)}\varphi = 0,$ and thus via a conformal change also in
a space of solutions to $\square\psi = 0.$ Now in \cite{sch_sch} we find the interesting notion of
Helmholtz numbers. Working on the hyperbola $X(p,q-1)$ and its Laplace operator
$\Delta = \Delta_{X(p,q-1)}$ we are interested in the fundamental solution $u$ to the Helmholtz
equation $(\Delta + \lambda)u = \delta$, for $\delta$ the Dirac distribution at the base point, and a constant $\lambda \in \C.$ There is a natural radial coordinate $r$ (in the case of Riemannian metric it is the geodesic
distance) around the base point, and following Hadamard one looks for a fundamental solution $u$
which near the base point behaves like $r^{2-n}$ (the dimension is here $n = p+q-2$), and
is of the form
$$u = U r^{2-n} + {\mathcal U} {\rm log}r$$
where both $U$ and ${\mathcal U}$ are analytic near $r = 0.$ We say that $\lambda$ is a
{\it Helmholtz number} if there is no log term in the solution to the corresponding Helmholtz equation. For
the case of Lorentz signature metric, where the equation is hyperbolic, this means that it satisfies
the {\it strong Huygens principle}, having a fundamental solution with singular support in the
light-like directions. Now the result in \cite{sch_sch} is among others for other rank one symmetric
spaces, that in our case the Helmholtz numbers are $\lambda_1, \lambda_2, \dots, \lambda_m$,
$m = n/2 - 1$ where $\lambda_l = l(l+n-1), l = -1, -2, \dots, - m.$ Furthermore, the product
$$P(\Delta) = (\Delta + \lambda_1)(\Delta + \lambda_2) \dots (\Delta + \lambda_m)$$
has a log-free fundamental solution as well (and therefore so does also each factor). The last
factor here is the Yamabe operator. 
One may now observe
\begin{prop}The Helmholtz numbers give eigenspaces corresponding to discrete series representations
on $X(p,q-1)$ with an exceptional property, namely by the above singular nature of the eigen-equations; also the
operator $P(\Delta)$ is the GJMS operator of order $2m$ and by a conformal change of coordinates
equivalent to $\square^m$ which also has a fundamental solution of Hadamard type with
no log term.
\end{prop}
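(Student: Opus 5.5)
The statement has three parts, and I will treat them in this order: the factorization of $P(\Delta)$ into shifted Laplacians, its identification with the GJMS operator, and the representation-theoretic meaning of the individual factors.

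\emph{Step 1 (factorization).} The hyperboloid $X(p,q-1)$ is an Einstein space of constant curvature, so $P_{2m}$ should factor there. I will use the known product formula for GJMS operators on Einstein manifolds, due to Gover and Fefferman--Graham. With $\mathrm{Ric} = 2\kappa(n-1)g$ it reads
$$P_{2m} = \prod_{j=1}^{m}\Big(\Delta + \kappa\big(\tfrac n2+j-1\big)\big(\tfrac n2-j\big)\Big),$$
where $\Delta$ is the non-negative Laplacian of the paper and $\kappa$ carries the sign of the curvature. In a first pass I will normalize the induced metric on $X(p,q-1)$ so that its curvature has the sign matching the convention of \cite{sch_sch}. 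I will then check that the constants $\kappa(\tfrac n2+j-1)(\tfrac n2-j)$, for $j=1,\dots,m$, coincide, after the substitution $l=j-\tfrac n2$, with the Helmholtz numbers $\lambda_l$ listed above. This is an elementary algebraic matching of quadratics in $j$, and it fixes the normalizations. As a consistency check, the factor with $j=1$ must be $\Delta+\tfrac{n-2}{4(n-1)}K = Y$. This matches the assertion in the remark that one factor is the Yamabe operator.

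\emph{Step 2 (equivalence with $\square^m$).} The embedding lemma shows that $X(p,q-1)$ is conformally embedded in $M$, and the parabolic section shows that $\R^{p-1,q-1}$ is conformally embedded as well. So the two are locally conformally equivalent with an explicit conformal factor $\Omega$. GJMS operators are conformally covariant with bidegrees $(\tfrac n2-m,\tfrac n2+m)$, and in flat space $P_{2m}=\square^m$. Hence
$$P_{2m} = \Omega^{-\frac n2-m}\,\square^m\,\Omega^{\frac n2-m}.$$
This is the generalization of the Yamabe covariance in Lemma 1. Since $m=n/2-1<n/2$, the GJMS operator exists in every dimension, so no obstruction tensor enters.

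\emph{Step 3 (log-free fundamental solution).} For $\square^m$ on $\R^{p-1,q-1}$ I will use $(P(x)+i0)^{m+1-n/2}$ up to constants. With $m=n/2-1$ the exponent is $0$. So the natural candidate is instead built from $\log$ or from the $\delta$-type limits, and I have to handle this degenerate case. My first attempt is to take $(P+i0)^{\lambda}$, differentiate in $\lambda$, and track the Gamma-factors that relate $\square^m (P+i0)^{\lambda+m}$ to $(P+i0)^{\lambda}$. I expect the ultrahyperbolic structure to yield a Hadamard form without a log term exactly as in \cite{sch_sch}. Transporting this solution by $\Omega$ preserves log-freeness, because $\Omega$ is smooth and nonvanishing near the base point.

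\emph{Step 4 (discrete series).} For each factor $\Delta+\lambda_l$, I will compare with the eigenvalue $-\lambda^2+\tfrac14$ of Proposition on $\pi^{p,q-1}_{+,\lambda}$, after rescaling to the Yamabe normalization. The values of $\lambda$ obtained this way lie in $\Z+\tfrac{p+q-1}{2}$ shifted appropriately and satisfy $\lambda>-1$, which identifies the eigenspaces with these $(\mathfrak g,K)$-modules.

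\emph{Main obstacle.} I expect the hardest part to be Step 3, together with the precise sign and normalization bookkeeping in Step 1. Log-freeness in the borderline exponent is a genuinely analytic statement, not a formal one. There I would rely on the explicit Riesz-type distribution computations and the result of \cite{sch_sch}, rather than reprove them.
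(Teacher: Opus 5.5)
Your Steps 1, 2 and 4 follow the paper's own route, which is just the surrounding remark: the Einstein product formula for GJMS operators from \cite{case_gover}, the covariance $P_M\Omega^{(n-2k)/2}\Phi^*f=\Omega^{(n+2k)/2}\Phi^*P_Nf$ to pass between $X(p,q-1)$ and $\R^{p-1,q-1}$, the parametrization $\lambda=l+\frac{n-1}{2}$ of the Yamabe eigenvalue, and log-freeness imported from \cite{sch_sch}.

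\textbf{The gap is in Step 3: an off-by-one in the exponent.} From
\[
\square\,(P+i0)^{\mu}=2\mu\,(2\mu+n-2)\,(P+i0)^{\mu-1}
\]
each application of $\square$ lowers the exponent by one. Moreover $\square(P+i0)^{1-n/2}$ is a multiple of $\delta$ (this is the paper's $E$). So the fundamental solution of $\square^m$ is a multiple of $(P+i0)^{m-n/2}$, not of $(P+i0)^{m+1-n/2}$. For $m=\frac n2-1$ the exponent is $-1$, not $0$:
\begin{itemize}
\item the intermediate constants $2\mu(2\mu+n-2)$, $\mu=-1,-2,\dots,2-\frac n2$, are nonzero, since $\mu<0$ and $2\mu+n-2\geq 2$;
\item $-1$ is not a pole of the family $(P+i0)^{\lambda}$, whose poles lie only at $-\frac n2-k$.
\end{itemize}
Hence $c\,(P+i0)^{-1}$ is a fundamental solution of $\square^m$, homogeneous of degree $2m-n=-2$ and free of logarithms. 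There is no borderline case: logarithms for $\square^m$ appear only when $m\geq\frac n2$, i.e.\ $m-\frac n2\in\{0,1,\dots\}$. Your proposed remedy, differentiating $(P+i0)^{\lambda}$ in $\lambda$, would manufacture $\log(P+i0)$. That is exactly the term the statement excludes, so Step 3 as written argues toward the wrong conclusion.

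Once corrected, Step 3 is the easiest step. Transported to $X(p,q-1)$, it even gives log-freeness of $P(\Delta)$ without \cite{sch_sch}; the paper imports this, and the Helmholtz property of the individual factors must still come from \cite{sch_sch}. For the transport, smoothness of $\Omega$ alone is not quite enough. You also need the squared ``distances'' from the base point on the two sides to differ by a smooth nonvanishing factor. This holds here: both agree, up to such factors, with the ambient pairing of the corresponding points of $\Xi$, which for the flat section equals $-\frac12 P(z-w)$.

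\textbf{Two smaller corrections.}
\begin{itemize}
\item In Step 1, with $\mathrm{Ric}=2\kappa(n-1)g$ the factors are $\Delta+2\kappa(\frac n2+j-1)(\frac n2-j)$. Your formula lacks the $2$, which your own $j=1$ check exposes, since $\frac{n-2}{4(n-1)}K=2\kappa\cdot\frac n2(\frac n2-1)$. With the induced metric on $X(p,q-1)$ (curvature $+1$, so $2\kappa=1$) one gets $c_j=(\frac n2+j-1)(\frac n2-j)=-\lambda_l$ for $l=j-\frac n2\in\{-m,\dots,-1\}$. With the opposite Laplacian sign of \cite{sch_sch}, as the paper remarks, $P(\Delta)=(-1)^mP_{2m}$.
\item In Step 4, ``discrete series'' needs $\lambda>0$, not merely $\lambda>-1$. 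This does hold: $l=-1,\dots,-m$ gives $\lambda=l+\frac{n-1}{2}\in\{\frac12,\frac32,\dots,\frac{n-3}{2}\}\subset\Z+\frac{p+q-1}{2}$. The Yamabe factor $l=-m$ corresponds to $\lambda=\frac12$, the parameter of the minimal representation.
\end{itemize}
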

The fact that there is such a product formula for the GJMS operators is a general fact about
Einstein manifolds (as we have here), see \cite{case_gover}.
The $\lambda_m$ factor in this product $P(\Delta)$ 
as we said is just the Yamabe operator; 
note the different sign convention we have
here for the Laplacian. It would be interesting to investigate discrete series for other symmetric spaces
to see similar exceptional discrete series for small eigenvalues. Note that the parameter $\lambda$
giving the eigenvalue of the Yamabe operator $Y_{X(p,q-1)}$ is $\lambda = l + \frac{n-1}{2}$, and that
$l = 0, 1, 2, 3, \dots$ yields the rest of the discrete series. Note that these are the eigenvalues on spheres,
and that the same formula gives on the hyperboloids the (finitely many) exceptional eigenvalues,
for negative values of $l$.

Let us finally return to the Yamabe operator with its conformal properties in connection with
these Helmholtz numbers; suppose we have a conformal transformation between two Riemannian
manifolds $\Phi: M \to N, \, \Phi^* g_N = \Omega^2 g_M$ then
$$Y_M \Omega^{(n-2)/2} \Phi^* f = \Omega^{(n+2)/2} \Phi^*Y_Nf$$
and hence
$$(Y_M + \lambda \Omega^2) \Omega^{(n-2)/2} \Phi^* f = \Omega^{(n+2)/2} \Phi^*(Y_N + \lambda)f$$
so that solutions to $(Y_N + \lambda)f = 0$ pull back to solutions of  $(Y_M + \lambda \Omega^2)h = 0.$
We may here think of $N$ as the hyperboloid and $M$ as flat space.
In particular, by the same principle, if $\lambda$ is a Helmholtz number on $N$ (modulo the
constant shift in the Yamabe operator), then 
$(Y_M + \lambda \Omega^2)$ has a log-free fundamental solution. These were found in our flat case
by Stellmacher, see \cite{sch_sch} with the potentials $\Omega^2 = x_j^{-2}.$

Similar considerations for the GJMS operator $P$ of order $2k$ where in the same setting
$$P_M \Omega^{(n-2k)/2} \Phi^* f = \Omega^{(n+2k)/2} \Phi^*P_Nf.$$
What we say here is that by a conformal change of coordinates, again possibly
between two different Riemannian manifolds, also the GJMS operators correspond
to each other; in particular we see a conncection between their fundamental solutions.

\end{remark}

\section{Final remarks on branching laws and symmetry--breaking}

We end with mentioning some additional references  and remarks about the general
themes of branching laws and symmetry-breaking.
This lecture was part of the sequence

\begin{itemize}
\item Introduction to Representation Theory (3h), Birgit Speh
\item Different Aspects of Rankin--Cohen Operators (2h), Michael Pevzner
\item Branching Problems in Representation Theory (3h) Toshiyuki Kobayashi
\item Geometric and Analytic Aspects of Branching Laws (2h) Bent \O{}rsted
\end{itemize}

with the purpose of introducing key concepts and indicating recent advances
in representation theory of Lie groups,
with a special emphasis on branching problems for infinite-dimensional representations of real reductive Lie groups. Below is included a list of references for further reading.
In these lectures was explained some of the basic mathematical theory
of group representations, mostly of reductive Lie groups, and their
structure via the theory of branching problems and intertwining operators. Thus the
aim here is to consider a group $G$ and a subgroup $H$ and a representation
$\pi$ of $G$ in some vector space $V$ (depending on the category, say
Hilbert space for a continuous unitary representation, or a Fr\'e{}chet space for
a smooth representation). Now for a representation $\pi^H$ of $H$ in $V^H$ of a similar nature
we wish to consider the restriction of $\pi$ to $H$ and look for a linear
$H$--equivariant map between them, i.e.
$$T: V \mapsto V^H,\,\,\, \pi^H(h) T = T \pi(h), \,\, (h \in H).$$

Again, depending on the category, we may want $T$ to be say a partial isometry or just
continuous. Such an operator we call {\it symmetry-breaking}, and in the special
case of $H = G$ it is just an {\it intertwining operator}, already an interesting class of
operators. 
See \cite{kob} for an overview. Constructing such operators is closely connected to the problem of
decomposing the restriction of $\pi$ to $H$ as a kind of sum or integral of irreducible
representations of $H$, the so--called {\it branching problem}. 
If $\pi$ is a unitary (always continuous) representation of a Lie group $G$, the
restriction to a one-parameter subgroup $U(t) = \pi(exp(tX))$ is the Fourier transform
of a spectral measure $E$ on the real line, namely
$$ U(t) = \int_{-\infty}^{\infty} e^{itx} dE(x)$$ 
and the generator $d\pi(X) = d/dt|_{t = 0}\pi(exp(tX))$
is a skew-adjoint operator. In some applications in quantum mechanics this spectral measure
and its support has a meaning in terms of observable quantities, for example if the state vector
is $\psi, ||\psi|| = 1$ and $C \subset \R$ a Borel set, then $P\{id\pi(X) \in C\} = (\psi, E(C)\psi)$
is the probability that the observable $id\pi(X)$ will be measured to have a value in $C.$
The corresponding
decomposition of the Hilbert space is thought of as a breaking of the symmetries from $G$.

Now the branching problem for a unitary representation is to find in a similar way the
spectrum of $H$ as explicitly as possible. One is usually interested in the following aspects
of intertwining operators and symmetry-breaking:
\begin{itemize}
\item Intertwining differential operators between $G$-representations
\item Knapp-Stein operators as convolutions with parameter-dependent distributions and their residues
\item Symmetry-breaking operators and their construction as integral and differential operators
\end{itemize}
some examples are, in particular relating to the Dirac operator in \cite{clerc_ors}
and the wave operator with generalizations as in \cite{Koba1}, \cite{Koba2}, and \cite{Koba3}.

Many important differential equations in physics, such as Maxwell's equations, have symmetries, and these reflect the 
relation between the space of solutions and a representation of the group of symmetries, often
a Lie group $G$. Hence as in Galois theory $G$ permutes the solutions of the equation.
Thus one studies, as we began above,
\begin{itemize}
\item natural differential operators in Riemannian and conformal geometry 
\item differential operators on induced representations
\item concrete intertwining integral operators, Knapp--Stein operators, see \cite{knapp} Chapter VII.
\item differential operators as residues of integral operators  
\end{itemize}
with differential forms as a probably less known example, see \cite{fish_ors}. On spheres
we will then have concrete operators that can be analyzed using the analogue of
spherical harmonics as in \cite{bran_ola}. It is an important point (again illustrated above) that conformal
differential geometry and canonical differential operators here are closely connected with
the representation theory of the conformal group of the sphere. Other geometries
arise as well for other groups. For a special case related to the Heisenberg group
see \cite{ku_ors}. Here there is a curious analogue in $CR$ geometry to the wave equation,
namely on the Heisenberg group with the standard left-invariant vector fields $[X,Y] = Z,$
the operator $D = XY + YX$ and its solutions has some analogues with the $O(p,q)$ case above.
We also mention that there are many natural first--order intertwining differential operators as in \cite{ors}.
There is also a natural class of integral symmetry--breaking operators for principal series
representations, see \cite{mo}.
          
For a major background article on symmetry--breaking see \cite{kob}, and
for an important special case with many details \cite{kob_ku}; this is the
 special case of symmetry--breaking differential operators for differential forms, where
the results were also obtained in parallel and independently in \cite{fjs}.
See also \cite{kob_som}.

A final important topic is that of branching laws for discrete series, see
\cite{ors_var1}, \cite{ors_var2}, \cite{ors_var3}, and \cite{var}.


\begin{thebibliography}{12}

\bibitem{alex} Alexakis, Spyros, The decomposition of global conformal invariants. {\it Annals of Mathematics Studies,} {\bf 182}. Princeton University Press, Princeton, NJ, (2012). x+449 pp.

\bibitem{beck} Beckner, William,
Sharp Sobolev inequalities on the sphere and the Moser-Trudinger inequality.
{\it Ann. of Math. (2)} {\bf 138} (1993), no. 1, 213--242.

\bibitem{bin}Binegar, B.; Zierau, R., Unitarization of a singular representation of $SO(p,q)$. {\it Comm. Math. Phys.} {\bf 138} (1991), no. 2, 245--258.

\bibitem{blitz}  Blitz, Samuel; Gover, A. Rod; Waldron, Andrew, Generalized Willmore energies, $Q$-curvatures, extrinsic Paneitz operators, and extrinsic Laplacian powers. {\it Commun. Contemp. Math.}
{\bf 26} (2024), no. 5, Paper No. 2350014, 50 pp.

\bibitem{bra_1} Branson, Thomas P., Sharp inequalities, the functional determinant, and the complementary series. {\it Trans. Amer. Math. Soc.} {\bf 347} (1995), no. 10, 3671--3742.

\bibitem{bra_2} Branson, Thomas P.,  $Q$-curvature, spectral invariants, and representation theory. {\it SIGMA Symmetry Integrability Geom. Methods Appl.} {\bf 3} (2007), Paper 090, 31 pp.

\bibitem{bran_chang}Branson, Thomas P.; Chang, Sun-Yung A.; Yang, Paul C.,
Estimates and extremals for zeta function determinants on four--manifolds.
{\it Comm. Math. Phys.} {\bf 149} (1992), no. 2, 241--262.

\bibitem{bran_ola} Branson T., Olafsson G., \O{}rsted B., Spectrum generating operators and intertwining operators for representations induced from a maximal parabolic subgroup, 
{\it J. Funct. Anal.} {\bf 135} (1996), 163--205.

\bibitem{bran_ors1} Branson, Thomas P.; \O{}rsted, Bent,  Conformal geometry and global invariants. {\it Differential Geom. Appl.} {\bf 1} (1991), no. 3, 279--308.

\bibitem{bran_ors2}  Branson, Thomas P.; \O{}rsted, Bent, Explicit functional determinants in four dimensions. {\it Proc. Amer. Math. Soc.} {\bf 113} (1991), no. 3, 669--682.

\bibitem{bran_ors3} Branson, Thomas P.; \O{}rsted, Bent, Conformal indices of Riemannian manifolds. {\it Compositio Math.} {\bf 60} (1986), no. 3, 261--293.

\bibitem{case_gover} Jeffrey S. Case and A. Rod Gover,
The GJMS operators in geometry, analysis, and physics (September 2025) 
arXiv:2509.16047v1 [math.DG] 

\bibitem{chang}  Chang, Sun-Yung Alice; Yang, Paul C. Prescribing Gaussian curvature on $S^2$. 
{\it Acta Math.} {\bf 159} (1987), no. 3-4, 215--259.

\bibitem{clerc_ors}
Clerc J.L., \O{}rsted B., Conformal covariance for the powers of the Dirac operator, {\it  J. Lie Theory}
{\bf 30} (2020) no. 2, 345--360.

\bibitem{fg1} Fefferman, Charles; Graham, C. Robin, Conformal invariants. The mathematical heritage of \'Elie Cartan (Lyon, 1984). {\it Ast\'erisque} 1985, Num\'ero Hors S\'erie, 95--116.

\bibitem{fg2} Fefferman, Charles; Graham, C. Robin, The ambient metric. {\it Annals of Mathematics Studies}, {\bf 178}. Princeton University Press, Princeton, NJ, 2012. x+113 pp.

\bibitem{fjs}  Fischmann, Matthias; Juhl, Andreas; Somberg, Petr, Conformal symmetry breaking differential operators on differential forms. {\it Mem. Amer. Math. Soc.} {\bf 268} (2020), no. 1304, v+112 pp.

\bibitem{fish_ors} Fischmann, Matthias; \O{}rsted, Bent,  A family of Riesz distributions for differential forms on Euclidian space in {\it Int. Math. Res. Not. IMRN} (2021), {\bf no. 13}, 9746--9768.

\bibitem{frank_lieb} Frank, Rupert L.; Lieb, Elliott H. Sharp constants in several inequalities on the Heisenberg group. {\it Ann. of Math.} (2) {\bf 176} (2012), no. 1, 349--381.

\bibitem{gil}Gilkey, Peter B. Invariance theory, the heat equation, and the Atiyah-Singer index theorem. Second edition. {\it Studies in Advanced Mathematics.} CRC Press, Boca Raton, FL, 1995. x+516 pp.

\bibitem{gover_ors} Gover, A. Rod; \O{}rsted, Bent,
Universal principles for Kazdan-Warner and Pohozaev-Schoen type identities.
{\it Commun. Contemp. Math.} {\bf 15} (2013), no. 4, 1350002, 27 pp.

\bibitem{gjms} Graham, C. Robin; Jenne, Ralph; Mason, Lionel J.; Sparling, George A. J., Conformally invariant powers of the Laplacian. I. Existence. {\it J. London Math. Soc. (2)} {\bf 46} (1992), no. 3, 557--565.

\bibitem{he}Helgason, Sigurdur, Differential geometry and symmetric spaces. {\it Pure and Applied Mathematics}, Vol. XII. Academic Press, New York-London, 1962. xiv+486 pp.

\bibitem{he2} Helgason, Sigurdur, Sophus Lie, the mathematician. The Sophus Lie Memorial Conference (Oslo, 1992), 3--21, Scand. Univ. Press, Oslo, 1994.  

\bibitem{juhl}  Juhl, Andreas, Families of conformally covariant differential operators, $Q$-curvature and holography. {\it Progress in Mathematics}, {\bf 275}. Birkh\"{a}user Verlag, Basel, 2009. xiv+488 pp.

\bibitem{juhl2}  Juhl, Andreas, Explicit formulas for GJMS-operators and $Q$-curvatures. {\it Geom. Funct. Anal.} {\bf 23} (2013), no. 4, 1278--1370. 

\bibitem{juhl_ors} Juhl, Andreas; \O{}rsted, Bent, Residue families, singular Yamabe problems and extrinsic conformal Laplacians. {\it Adv. Math.} {\bf 409} (2022), part A, Paper No. 108634, 158 pp.

\bibitem{knapp} Knapp, Anthony W., Representation theory of semisimple groups. An overview based on examples. {\it Princeton Mathematical Series}, {\bf 36}. Princeton University Press, 
Princeton, NJ, 1986. xviii+774 pp.

\bibitem{kob_som}
Kobayashi T., \O{}rsted B., Somberg P., Sou\v{c}ek V., Branching laws for Verma modules and applications in
parabolic geometry. I, {\it Adv. Math.} {\bf285} (2015), 1796--1852.

\bibitem{kob_ku}    
 Kobayashi T., Kubo T., Pevzner M., Conformal symmetry breaking operators for differential forms on
spheres, Lecture Notes in Math., Vol. 2170, Springer, Singapore, 2016.

\bibitem{kob} 
 Kobayashi T., A program for branching problems in the representation theory of real reductive groups, in
{\it Representations of reductive groups}, {\it Progr. Math.}, {\bf Vol. 312}, Birkh\"a{}user/Springer, Cham, 2015, 277--322.

\bibitem{Koba1} 
 Kobayashi, Toshiyuki; \O{}rsted, Bent: Analysis on the minimal representation of $O(p,q)$. 
I. Realization via conformal geometry. {\it Adv. Math.} {\bf 180} (2003), no. 2, 486--512.

\bibitem{Koba2}
 Kobayashi, Toshiyuki; \O{}rsted, Bent: Analysis on the minimal representation of $O(p,q)$. II. 
Branching laws, in {\it Adv. Math.} {\bf 180} (2003), no. 2, 513--550. 

\bibitem{Koba3}
 Kobayashi, Toshiyuki; \O{}rsted, Bent: Analysis on the minimal representation of $O(p,q)$. 
III. Ultrahyperbolic equations on $\R^{p-1,q-1}$, in  {\it Adv. Math.} {\bf 180} (2003), no. 2, 551--595.


\bibitem{ku_ors} Kubo, Toshihisa; \O{}rsted, Bent, On the space of K--finite solutions to intertwining differential operators, in {\it Representation Theory} {\bf 23} (2019), 213--248.

\bibitem{moe}
M\o{}ller, Niels Martin; \O{}rsted, Bent, Rigidity of conformal functionals on spheres. {\it Int. Math. Res. Not. IMRN} (2014), no. 22, 6302--6339.

\bibitem{moe2} M\o{}ller, Niels Martin, Extremal metrics for spectral functions of Dirac operators in even and odd dimensions. {\it Adv. Math.} {\bf229} (2012), no. 2, 1001--1046.

\bibitem{mo}
M\"o{}llers J., \O{}rsted B., Oshima Y., Knapp--Stein type intertwining operators for symmetric pairs, {\it Adv. Math.}
{\bf 294} (2016), 256--306.

\bibitem{ok} Okikiolu, K., Critical metrics for the determinant of the Laplacian in odd 
dimensions. {\it Ann. of Math.} (2) {\bf 153} (2001), no. 2, 471--531.

\bibitem{osgood}  Osgood, B.; Phillips, R.; Sarnak, P. Extremals of determinants of Laplacians. {\it J. Funct. Anal.} {\bf 80} (1988), no. 1, 148--211.

\bibitem{ro_pa}  Parker, Thomas; Rosenberg, Steven, Invariants of conformal Laplacians. {\it J. Differential Geom.} {\bf 25} (1987), no. 2, 199--222.

\bibitem{sch_sch} Schimming, Rainer; Schlichtkrull, Henrik, Helmholtz operators on harmonic manifolds. {\it 
Acta Math.} {\bf 173} (1994), no. 2, 235--258.

\bibitem{str} Strichartz, Robert S., Linear algebra of curvature tensors and their covariant derivatives. {\it Canad. J. Math.} {\bf 40} (1988), no. 5, 1105--1143.

\bibitem{ors_var1} \O{}rsted, Bent; Vargas, Jorge, Restriction of square integrable representations: discrete spectrum, in {\it Duke Math. J.} {\bf 123} (2004), no. 3, 609--633.

\bibitem{ors_var2} \O{}rsted, Bent; Vargas, Jorge A., Branching problems in reproducing kernel spaces, in
{\it Duke Math. J.} {\bf 169} (2020), no. 18, 3477--3537.

\bibitem{ors_var3}
\O{}rsted, Bent, Vargas, Jorge A.,
Pseudo-dual pairs and branching of Discrete Series
arXiv:2302.14190,  
to appear in {\it Symmetry in Geometry and Analysis -- Festschrift for Toshiyuki Kobayashi.}
M. Pevzner and H. Sekiguchi eds., Progress in Mathematics, Springer 2024.

\bibitem{ors} \O{}rsted, Bent, Generalized gradients and Poisson transforms, in {Global analysis and harmonic analysis} (Marseille-Luminy, 1999), 235--249, {\it S\'e{}min. Congr.}, {\bf 4}, {\it Soc. Math. France}, Paris, 2000.

\bibitem{var} Vargas, J.: Associated symmetric pair and multiplicities of admissible 
restriction of discrete series, in {\it Int.
J. Math.} {\bf 27} 12 (2016).
\end{thebibliography}
\end{document}